\newtheorem{theorem}{Theorem}[section]
\newtheorem{corollary}[theorem]{Corollary}
\newtheorem{remark}[theorem]{Remark}
\newtheorem{lemma}[theorem]{Lemma}
\newtheorem{proposition}[theorem]{Proposition}
\newtheorem{example}[theorem]{Example}
\newtheorem{examples}[theorem]{Examples}
\newtheorem{notation}[theorem]{Notation}
\numberwithin{equation}{section}
\def\r{\mathbb{R}}
\def\rn{\mathbb{R}^N}
\def\z{\mathbb{Z}}
\def\n{\mathbb{N}}
\def\cc{\mathbb{C}}
\def\eps{\varepsilon}
\def\rh{\rightharpoonup}
\def\io{\int_{\Omega}}
\def\irn{\displaystyle\int_{\r^N}}
\def\vp{\varphi}
\def\o{\Omega}
\def\t{\Theta}
\def\bf{\boldsymbol}
\def\wt{\widetilde}
\def\cC{\mathcal{C}}
\def\cH{\mathcal{H}}
\def\cI{\mathcal{I}}
\def\cJ{\mathcal{J}}
\def\cK{\mathcal{K}}
\def\cM{\mathcal{M}}
\def\cN{\mathcal{N}}
\def\cU{\mathcal{U}}
\def\F{\mathrm{Fix}}
\def\supp{\text{supp}}
\def\bar{\overline}
\def\d{\,\mathrm{d}}
\newcommand{\dlim}{\displaystyle\lim}
\newcommand{\dmin}{\displaystyle\min}
\newcommand{\dmax}{\displaystyle\max}
\newcommand{\dsum}{\displaystyle\sum}
\newcommand{\what}{\widehat}
\author{Mónica Clapp\footnote{M. Clapp was supported by CONACYT (Mexico) through the grant A1-S-10457.}\qquad and \qquad Mayra Soares\footnote{M. Soares was supported by UNAM-DGAPA (Mexico) through a postdoctoral fellowship.}}
\title{Energy estimates for seminodal solutions to an elliptic system with mixed couplings}
\date{\today}
\begin{document}
	\maketitle
	
\begin{abstract}
We study the system of semilinear elliptic equations		
$$-\Delta u_i+ u_i = \dsum_{j=1}^\ell \beta_{ij}|u_j|^p|u_i|^{p-2}u_i, \qquad u_i\in H^1(\rn),\qquad i=1,\ldots,\ell,$$
where $N\geq 4$, $1<p<\frac{N}{N-2}$, and the matrix $(\beta_{ij})$ is symmetric and admits a block decomposition such that the entries within each block are positive or zero and all other entries are negative.

We provide simple conditions on $(\beta_{ij})$, which guarantee the existence of fully nontrivial solutions, i.e., solutions all of whose components are nontrivial. 

We establish existence of fully nontrivial solutions to the system having a prescribed combination of positive and nonradial sign-changing components, and we give an upper bound for their energy when the system has at most two blocks. 

We derive the existence of solutions with positive and nonradial sign-changing components to the system of singularly perturbed elliptic equations
$$-\eps^2\Delta u_i+ u_i = \dsum_{j=1}^\ell \beta_{ij}|u_j|^p|u_i|^{p-2}u_i, \qquad u_i\in H^1_0(B_1(0)),\qquad i=1,\ldots,\ell,$$
in the unit ball, exhibiting two different kinds of asymptotic behavior: solutions whose components decouple as $\eps\to 0$, and solutions whose components remain coupled all the way up to their limit.
\medskip

\textsc{Keywords:} Nonlinear elliptic system, weakly coupled, mixed cooperation and competition, positive and sign-changing components, singularly perturbed elliptic system.
\medskip
		
\textsc{MSC2020:} 35J47 (35A15, 35B06, 35B25, 35B40)
	
\end{abstract}
	
\section{Introduction}
	
Consider the system of nonlinear elliptic equations
\begin{equation} \label{eq:system}
\begin{cases}
-\Delta u_i+ u_i = \dsum_{j=1}^\ell \beta_{ij}|u_j|^p|u_i|^{p-2}u_i, \\
u_i\in H^1(\rn),\qquad i=1,\ldots,\ell,
\end{cases}
\end{equation}
where $N\geq 4$, $\beta_{ij}\in\r$, and $1<p<\frac{N}{N-2}$.

This system arises as a model for various physical phenomena, for instance, in the study of standing	waves for a mixture of Bose-Einstein condensates of $\ell$ different hyperfine states which overlap in space; see  \cite{EGBJB}. The coefficient $\beta_{ij}$ represents the interaction force between the components $u_i$ and $u_j$. It can be attractive ($\beta_{ij} > 0$), repulsive ($\beta_{ij} < 0$) or null ($\beta_{ij}=0$). We make the following assumptions:
\begin{itemize}
\item[$(B_1)$] The matrix $(\beta_{ij})$ is symmetric and admits a block decomposition as follows: For some $1 \leq q \leq\ell$ there exist \ $0=\ell_0<\ell_1<\dots<\ell_{q-1}<\ell_q=\ell$ \ such that, if we set
\begin{align*}
	&I_h:= \{i \in  \{1,\dots,\ell\}:  \ell_{h-1} < i \le \ell_h \},\\
	&\cI_h:=I_h\times I_h,\quad \text{and} \quad \cK_h:=\big\{(i,j)\in I_h\times I_k: k\in\{1,\ldots,q\}\smallsetminus\{h\}\big\},
\end{align*}
then $\beta_{ii}>0, \ \beta_{ij}\geq 0 \text{ for all } (i,j)\in \cI_h,$ \ and \ $\beta_{ij}<0 \text{ for all } (i,j)\in \cK_h,\quad h=1,\ldots,q$.
\item[$(B_2)$] For each $h=1,\ldots,q$, the graph whose set of vertices is $I_h$ and whose set of edges is $E_h:=\{\{i,j\}:i,j\in I_h, \ i\neq j, \ \beta_{ij}>0\}$ is connected. 
\end{itemize}

According to the decomposition given by $(B_1)$, a solution $\bf u=(u_1,\ldots,u_\ell)$ to \eqref{eq:system} may be written in block-form as
\[\bf u=(\bar u_1,\ldots,\bar u_q)\qquad\text{with \ }\bar u_h=(u_{\ell_{h-1}+1},\ldots,u_{\ell_h}).\]
$\bf u$ is said to be \emph{nontrivial} if at least one of its components $u_i$ is different from zero and it is said to be \emph{fully nontrivial} if every component $u_i$ is different from zero. We shall call it \emph{block-wise nontrivial} if at least one component in each block $\bar u_h$ is nontrivial.

If $q=1$ the system is called \emph{cooperative} and it is called \emph{competitive} if $q=\ell$. These kinds of systems have been studied extensively. Systems with mixed couplings were already investigated in the seminal paper by Lin and Wei \cite{LW2}, and more recently, e.g.,  in \cite{bsw, cp, dp, sw1, sw2, so, st, ty, tyz, ww}.

Our main interest is to study the existence of fully nontrivial solutions to the system \eqref{eq:system} with a prescribed set of positive and sign-changing components - referred to in the literature as \emph{seminodal} or \emph{semipositive} solutions -, and to give an upper bound for their energy.

We stress that obtaining an upper bound for the least energy of a solution to the system to \eqref{eq:system} is interesting even in the case when all components are positive. Indeed, as shown by Lin and Wei in \cite{LW2} a positive least energy solution does not exist when $q\geq 2$; see also Proposition \ref{prop:ground_states} below. On the other hand, in \cite{s} Sirakov established the existence of solutions with positive radial components, while sign-changing and seminodal solutions have been exhibited, e.g., in \cite{clz, CS, llw, sw3, sw4, tt}. But nothing is said about their energy. For a single equation an upper bound for the least energy of a sign-changing solution is given in \cite{CSr}. Here we improve that upper bound; see Corollary \ref{cor:cs} below.

We write $\|\,\cdot\,\|$ for the usual norm in the Sobolev space $H^1(\rn)$ and, abusing notation, we set
\[\|\bf u\|^2:=\sum_{i=1}^\ell\|u_i\|^2\qquad\text{for \ }\bf u=(u_1,\ldots,u_\ell)\in (H^1(\rn))^\ell.\]
For each $h=1,\ldots,q$ \ set \ $\r^{I_h}:=\{\bar s=(s_{\ell_{h-1}+1},\ldots,s_{\ell_h}):s_i\in\r\text{ for all }i\in I_h\}$ \ and define
\begin{equation} \label{eq:mu_h}
\mu_h:=\inf_{\substack{\bar s\in\r^{I_h} \smallskip \\ \bar s\neq 0}}\left(\frac{\dsum_{i\in I_h}s_i^2}{\Big(\dsum\limits_{{(i,j)\in \cI_h}}\beta_{ij}|s_i|^p|s_j|^p\Big)^\frac{2}{2p}}\right)^\frac{p}{p-1}.
\end{equation}
Our first result reads as follows.

\begin{theorem}\label{thm:Main1}
Assume $(B_1)$, $(B_2)$ and
\begin{itemize}
\item[$(B_3)$] If $q\geq 2$ then, for every $h\in\{1,\ldots,q\}$ such that $\ell_{h}-\ell_{h-1}\geq 2$, the inequality
$$\Big(\min_{\{i,j\}\in E_h}\beta_{ij}\Big)\left[\frac{\dmin_{h=1,\ldots,q}\dmax_{i\in I_h}\beta_{ii}}{\dsum_{(i,j)\in\cI_h}\beta_{ij}}\right]^\frac{p}{p-1}>\,C_q\sum_{(i,j)\in\cK_h}|\beta_{ij}|$$
holds true, where $C_q$ is a positive constant independent of $(\beta_{ij})$. 
\end{itemize}
If $q\geq 2$ assume also that $N\neq 5$. Let $\{1,\ldots,q\}=Q^+\cup Q^-$ with $Q^+\cap Q^-=\emptyset$. Then there exists a fully nontrivial solution $\bf w=(\bar w_1,\ldots,\bar w_q)$ to the system \eqref{eq:system} with the following properties:
\begin{itemize}
\item[$(a)$] Every component of $\bar w_h$ is positive if $h\in Q^+$ and every component of $\bar w_h$ is nonradial and changes sign if $h\in Q^-$.
\item[$(b)$] If $q\geq 2$ then
$$\|\bf w\|^2>\sum_{h\in Q^+}\mu_h\,\|\omega\|^2+2\sum_{k\in Q^-}\mu_k\,\|\omega\|^2.$$
\item[$(c)$] If $q=1$, i.e., if the system is cooperative, then 
\[\|\bf w\|^2=\mu_1\,\|\omega\|^2\text{ \ if \ }Q^+=\{1\}\text{ \ and \ }\|\bf w\|^2<10\,\mu_1\,\|\omega\|^2\text{ \ if \ }Q^-=\{1\}.\]
\item[$(d)$] If $q=2$, then
\begin{equation*}
\|\bf w\|^2< 
\begin{cases}
\min\limits_{\substack{h,k\in Q^+  \\ h\neq k}}(\mu_k+6\mu_h)\,\|\omega\|^2 &\text{if \ }Q^+=\{1,2\}, \\
(\mu_k+12\mu_h)\,\|\omega\|^2 &\text{if \ }k\in Q^+\text{ and }h\in Q^-, \smallskip\\
12(\mu_1+\mu_2)\,\|\omega\|^2 &\text{if \ }Q^-=\{1,2\},
\end{cases}
\end{equation*}
\end{itemize}
where $\omega$ is the unique positive radial solution to the equation
\begin{equation*}
-\Delta w+w=|w|^{2p-2}w,\qquad w\in H^1(\rn).
\end{equation*} 
\end{theorem}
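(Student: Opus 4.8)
The plan is to realize $\mathbf{w}$ as a critical point of the energy functional restricted to a suitable symmetric Nehari-type set, exploiting a $\mathbb{Z}/2$-action (for the sign-changing blocks) and the orthogonal group action (to force nonradiality). Concretely, I would work in a subspace $X$ of $(H^1(\mathbb{R}^N))^\ell$ consisting of functions invariant under a chosen finite subgroup $G$ of $O(N)$, and for each block $h\in Q^-$ impose an additional $\mathbb{Z}/2$-equivariance (the "$\phi$-twist" used in producing nonradial sign-changing solutions, as in \cite{CSr,CS}). On this space the functional
\[
J(\mathbf u)=\frac12\|\mathbf u\|^2-\frac{1}{2p}\sum_{i,j}\beta_{ij}\io|u_j|^p|u_i|^p
\]
is well defined because $2p<2^*$ for $N\ge 4$, $1<p<\frac{N}{N-2}$. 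The first step is to set up the constrained minimization: minimize $J$ over the set $\mathcal N$ of block-wise nontrivial $\mathbf u\in X$ satisfying the block Nehari identities $\partial_{\bar u_h}J(\mathbf u)[\bar u_h]=0$ for every $h$ together with the "mixed" balancing conditions that detect competition between blocks, exactly as in the construction underlying Theorem 1.1's earlier parts. Under $(B_1)$–$(B_2)$ the fibering argument guarantees that for $\bar u_h\neq 0$ there is a unique positive rescaling landing on the block-Nehari manifold, so $\mathcal N$ is a natural $C^1$-manifold and $J$ is bounded below on it by a positive constant.

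The second step is compactness. Because we are on a fixed-group-invariant subspace with a nontrivial group action having no fixed points at infinity except the obvious ones, the embedding $H^1_G(\mathbb{R}^N)\hookrightarrow L^{2p}(\mathbb{R}^N)$ is compact (Palais/Lions principle of symmetric criticality combined with the compactness of symmetric Sobolev embeddings), so a minimizing sequence for $J|_{\mathcal N}$ converges, after the symmetric group action is quotiented out, to a block-wise nontrivial minimizer $\mathbf w$. The principle of symmetric criticality then promotes $\mathbf w$ to a genuine solution of \eqref{eq:system}. To upgrade "block-wise nontrivial" to "fully nontrivial" one invokes $(B_2)$: within a block the cooperative coupling graph is connected, so if one component in the block vanished, testing the equation for a neighboring component (connected by a positive $\beta_{ij}$) against that component and using the strong maximum principle / a unique-continuation-type argument forces the whole block to vanish, contradicting block-wise nontriviality. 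Positivity of the components in blocks $h\in Q^+$ is arranged by working with $|u_i|$ in those blocks (the energy is unchanged and the minimizer can be taken nonnegative, then strictly positive by the maximum principle); nonradial sign-change in blocks $h\in Q^-$ is exactly what the $\mathbb{Z}/2$-twist was designed to enforce, preventing the components from being radial while forcing a sign change.

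The third step is the energy estimates $(b)$–$(d)$. The lower bound $(b)$ is the easy direction: for any $\mathbf u\in\mathcal N$, restricting attention to one block and using the definition \eqref{eq:mu_h} of $\mu_h$ gives $\|\bar u_h\|^2\ge \mu_h\|\omega\|^2$ when $h\in Q^+$, while for $h\in Q^-$ the sign-changing constraint costs (at least) a factor of $2$ because a sign-changing function in the constraint set has energy at least twice the least positive level (its positive and negative parts each carry at least the block ground-state energy, up to the interaction, which is favorable here); strict inequality comes from the repulsive inter-block terms being strictly negative on a fully nontrivial solution, which bumps each block strictly above its isolated value. The upper bounds $(c)$ and $(d)$ require producing an explicit competitor in $\mathcal N$: take well-separated translates of (rescaled) copies of $\omega$ — one profile per block in $Q^+$, and for blocks in $Q^-$ a sign-changing configuration built from a bounded number of translated $\pm\omega$ bumps arranged symmetrically (this is where the constants $10$, $6$, $12$ come from, counting the number of $\omega$-bumps needed: e.g. $10\approx$ a nonradial sign-changing profile for a single cooperative block; $6$ and $12$ reflect, respectively, a positive block coupled to another via a small interaction cost, or a sign-changing block). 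One places the bumps belonging to different blocks far apart so the (negative) inter-block interaction is negligible and the (positive) intra-block interaction between same-block bumps is controlled; projecting this quasi-solution onto $\mathcal N$ changes the energy by an amount going to zero as the separation grows, yielding the stated strict inequalities, with hypothesis $(B_3)$ ensuring that the projection stays block-wise nontrivial (it prevents the competition from killing a block). The constraint $N\ne 5$ (and $N\ge 4$, $N\ne 5$ when $q\ge 2$) enters in the decay-rate bookkeeping for the interaction integrals: the cross terms $\int |\omega(x)|^{2p-1}|\omega(x-R e)|\,dx$ must decay strictly faster than the main-order correction, and in dimension $5$ with $p$ near the upper range the exponents coincide, breaking the estimate.

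The main obstacle I expect is the upper-bound construction in $(c)$–$(d)$: one must simultaneously (i) build an explicit sign-changing, nonradial, group-invariant competitor with a controlled number of bumps, (ii) show the nonlinear projection onto the multi-constraint manifold $\mathcal N$ is well defined and keeps every block nontrivial (this is exactly where $(B_3)$ is used quantitatively, to beat the repulsive terms $\sum_{(i,j)\in\mathcal K_h}|\beta_{ij}|$), and (iii) track all interaction integrals with sharp enough error terms to get the clean numerical constants, which is precisely the delicate point forcing $N\ne 5$. The rest — the variational setup, compactness on the symmetric space, and the maximum-principle promotion to full nontriviality — is comparatively routine given the tools already in the literature (\cite{CSr,CS,s,LW2}) and the hypotheses $(B_1)$–$(B_2)$.
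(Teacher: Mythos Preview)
Your overall architecture is right in spirit---symmetric variational setting, block-wise Nehari constraint, $\phi$-twist for sign-changing blocks, competitor built from translated copies of $\omega$---but two load-bearing steps are genuinely wrong, and a third is misattributed.

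\textbf{Compactness.} You work with a \emph{finite} subgroup $G\subset O(N)$ and then assert that $H^1_G(\mathbb{R}^N)\hookrightarrow L^{2p}(\mathbb{R}^N)$ is compact. It is not: a finite group has only finite orbits, so translations along any direction destroy compactness exactly as in the nonsymmetric case. The paper does \emph{not} obtain compactness this way for $q\le 2$. Instead it carries out a full concentration-compactness splitting (Theorem~\ref{thm:splitting}) for minimizing sequences on $\mathcal N^{\boldsymbol\phi}$, tracks where mass can escape along $G$-orbits of concentration points, and then shows (Corollary~\ref{cor:main_inequality}) that the minimum is attained precisely when the strict inequality $c^{\boldsymbol\phi}<c^{\boldsymbol\phi}_{\widehat Q\setminus\{h\}}+|G\xi|\,c^{\boldsymbol\phi|G_\xi}_h$ holds for every $\xi\neq 0$. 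The upper-bound competitors you describe are exactly what verify this inequality, so the competitor construction and the compactness argument are \emph{the same step}, not separate ones. For $q\ge 3$ the paper switches to the group $G_\infty$ of Example~\ref{example2}, which has only infinite orbits (away from $0$), and there compactness \emph{is} automatic---but then no energy upper bound is obtained, which is why part $(d)$ stops at $q=2$.

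\textbf{The restriction $N\neq 5$.} This has nothing to do with decay-rate coincidences in the interaction integrals. It is purely group-theoretic: the groups $G_m$ and $G_\infty$ act on $\mathbb{C}\times\mathbb{C}\times\mathbb{R}^{N-4}$, and a point $(0,0,y)$ with $y\neq 0$ has infinite orbit only if $O(N-4)$ acts with infinite orbits on $\mathbb{R}^{N-4}\smallsetminus\{0\}$, i.e.\ only if $N-4\ge 2$ or $N-4=0$. When $N=5$ such points have finite orbit, the compactness criterion of Corollary~\ref{cor:main_inequality} acquires an extra case, and the argument of Lemma~\ref{lem:energy_bounds} breaks.

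\textbf{Role of $(B_3)$ and full nontriviality.} You invoke $(B_3)$ to keep the Nehari projection block-wise nontrivial; the paper does not need it there (Lemma~\ref{lem:N}$(ii)$ handles the projection under condition~\eqref{eq:N} alone). Hypothesis $(B_3)$ is used \emph{after} a block-wise nontrivial minimizer is found, to upgrade it to fully nontrivial (Theorem~\ref{thm:fully_nontrivial}): if some component $u_{i_0}$ in block $h$ vanishes, one perturbs it by $\varepsilon\varphi$ with $\varphi$ a carefully chosen neighboring component, and Lemma~\ref{lem:cp} shows the energy drops provided the cooperative neighbor term beats the competitive cross-block terms---which is precisely the inequality in $(B_3)$. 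Your proposed ``maximum principle / unique continuation'' route does not work here because components in $Q^-$ blocks are sign-changing, so the strong maximum principle is unavailable.
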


To prove this result we introduce suitable symmetries that produce, by construction, a change of sign on some of the components. Then, using concentration compactness techniques, we carry out a careful analysis of the effect of the symmetries on the behavior of symmetric minimizing sequences for the system \eqref{eq:system}; see Theorem \ref{thm:splitting}. This approach was taken in \cite[Theorem 1.1]{CS} to establish existence of solutions with a prescribed number of positive and sign-changing components for the competitive system \eqref{eq:system} with $q=\ell$. However, the symmetries used there are too restrictive and do not allow energy bounds to be obtained. In this paper we consider general symmetries, admitting finite orbits. This makes it more difficult to show existence, due to the lack of compactness of the variational functional. On the other hand, the compactness condition given in Corollary 2.8 immediately gives energy estimates. But showing that this condition is fulfilled requires, in turn, precise knowledge about the asymptotic decay of the components of the system. This information does not seem to be available so far. This is why the upper energy bounds in Theorem \ref{thm:Main1} are given only for a system with at most two blocks; see Remark \ref{rem:bounds}.

Regarding our assumptions on the matrix $(\beta_{ij})$, condition $(B_1)$ is enough to guarantee the existence of a block-wise nontrivial solution (see Theorem \ref{thm:existence}), while $(B_2)$ and $(B_3)$ ensure that this solution is fully nontrivial. These last two conditions are weaker than the condition stated in \cite[Theorem 1.2]{cp}. Indeed, the latter forces $\beta_{ij}>0$ if $i,j\in I_h$ for every $h=1,\ldots,q$. Note that if this is the case, the graph defined in $(B_2)$ is complete and, therefore, connected. So the condition in \cite[Theorem 1.2]{cp} implies $(B_2)$ and $(B_3)$ but not the other way around. The constant $C_q$ that appears in $(B_3)$ depends on the symmetries we have chosen to obtain Theorem \ref{thm:Main1} and it is explicitly given in its proof.

The following result is a special case, for $\ell=1$, of Theorem \ref{thm:Main1}. It improves \cite[Theorem 1.1]{CSr}.

\begin{corollary} \label{cor:cs}
For $N\geq 4$ the equation 
\begin{equation*}
-\Delta w+w=|w|^{2p-2}w,\qquad w\in H^1(\rn).
\end{equation*} 
has a nonradial sign-changing solution $\widehat{\omega}$ such that $\|\widehat{\omega}\|^2<10\,\|\omega\|^2$, where $\omega$ is the unique positive radial solution to this equation.
\end{corollary}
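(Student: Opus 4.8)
The plan is to derive Corollary \ref{cor:cs} directly from Theorem \ref{thm:Main1} by specializing to the case $\ell=1$. When $\ell=1$, hypothesis $(B_1)$ reduces to choosing $q=1$, $\ell_0=0$, $\ell_1=1$, $I_1=\{1\}$, and requiring $\beta_{11}>0$; after the usual scaling of the single unknown we may take $\beta_{11}=1$, so that the system \eqref{eq:system} becomes exactly the scalar equation $-\Delta w+w=|w|^{2p-2}w$ in $H^1(\rn)$. Hypothesis $(B_2)$ is vacuous since $I_1$ has a single vertex and $E_1=\emptyset$ (a one-vertex graph is trivially connected), and hypothesis $(B_3)$ is vacuous because $q=1$. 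The restriction $N\neq 5$ for $q\geq 2$ does not apply here, so the only standing assumption is $N\geq 4$, which is the hypothesis of the corollary.

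Next I would compute $\mu_1$ from \eqref{eq:mu_h} in this degenerate setting. With $I_1=\{1\}$ and $\beta_{11}=1$, the infimum is over $s_1\in\r\setminus\{0\}$ of $\big(s_1^2/(|s_1|^{2p})^{1/p}\big)^{p/(p-1)} = \big(s_1^2/s_1^2\big)^{p/(p-1)} = 1$, so $\mu_1=1$. Since the single block can be placed in $Q^-$, taking $Q^+=\emptyset$ and $Q^-=\{1\}$, part $(a)$ of Theorem \ref{thm:Main1} produces a fully nontrivial solution $\bf w=\widehat\omega$ whose single component is nonradial and changes sign, and part $(c)$ gives the energy bound $\|\widehat\omega\|^2 < 10\,\mu_1\,\|\omega\|^2 = 10\,\|\omega\|^2$. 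This is precisely the assertion of the corollary. One should also note that in this scalar case the solution $\omega$ of Theorem \ref{thm:Main1} coincides with the positive radial solution named in the corollary, since both solve the same equation $-\Delta w+w=|w|^{2p-2}w$ and this solution is unique up to translation (a fact already invoked in the statement of Theorem \ref{thm:Main1}).

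There is essentially no obstacle here: the corollary is a genuine corollary, and the only point requiring a word of care is checking that the seemingly nontrivial hypotheses $(B_2)$ and $(B_3)$ are automatically satisfied (or empty) when $\ell=q=1$, and that the matrix normalization $\beta_{11}=1$ is harmless. The comparison with \cite[Theorem 1.1]{CSr} — namely that the constant $10$ improves the previously known bound — is a remark about the literature rather than part of the proof, so I would simply state it without elaboration. Thus the proof is a one-paragraph specialization argument.

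\begin{proof}[Proof of Corollary \ref{cor:cs}]
Apply Theorem \ref{thm:Main1} with $\ell=1$. Then necessarily $q=1$, $\ell_0=0$, $\ell_1=1$, $I_1=\{1\}$, $\cI_1=\{(1,1)\}$, and $\cK_1=\emptyset$. Rescaling the unknown we may assume $\beta_{11}=1$, so that \eqref{eq:system} is the scalar equation $-\Delta w+w=|w|^{2p-2}w$, $w\in H^1(\rn)$. Condition $(B_1)$ holds since $\beta_{11}>0$; condition $(B_2)$ holds trivially because the graph on the single vertex $I_1=\{1\}$ with $E_1=\emptyset$ is connected; and condition $(B_3)$ is vacuous because $q=1$. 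The hypothesis $N\geq 4$ is assumed, and the requirement $N\neq 5$ is irrelevant since $q=1$. From \eqref{eq:mu_h} we get
\[\mu_1=\inf_{s_1\in\r\setminus\{0\}}\left(\frac{s_1^2}{\big(|s_1|^{2p}\big)^{1/p}}\right)^{\frac{p}{p-1}}=1.\]
Choosing $Q^+=\emptyset$ and $Q^-=\{1\}$, Theorem \ref{thm:Main1}$(a)$ yields a fully nontrivial solution $\bf w$, which in this scalar case is a single function $\widehat\omega$ that is nonradial and changes sign, and Theorem \ref{thm:Main1}$(c)$ gives
\[\|\widehat\omega\|^2<10\,\mu_1\,\|\omega\|^2=10\,\|\omega\|^2,\]
where $\omega$ is the unique positive radial solution to the equation. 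This is the claimed assertion.
\end{proof}
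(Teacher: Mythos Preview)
Your proposal is correct and is precisely the argument the paper intends: the text immediately preceding Corollary~\ref{cor:cs} states that it ``is a special case, for $\ell=1$, of Theorem~\ref{thm:Main1},'' and your proof is simply a careful unpacking of that specialization, including the verification that $(B_2)$ and $(B_3)$ are vacuous and that $\mu_1=1$ when $\beta_{11}=1$.
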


The system \eqref{eq:system} is the limit, as $\eps\to 0$, of the system of singularly perturbed elliptic equations
\begin{equation}\label{eq:system_eps}
\tag{$\mathscr S_{\eps,\mathbb{B}}$}\qquad
\begin{cases}
-\eps^2\Delta u_i+ u_i = \dsum_{j=1}^\ell \beta_{ij}|u_j|^p|u_i|^{p-2}u_i, \\
u_i\in H^1_0(\mathbb{B}),\qquad i=1,\ldots,\ell,
\end{cases}
\end{equation}
where $\eps>0$, $\mathbb{B}$ is the unit ball in $\rn$, $N\geq 4$, $1<p<\frac{N}{N-2}$, and the matrix $(\beta_{ij})$ satisfies assumptions $(B_1)$ and $(B_2)$. 

In \cite{LW} Lin and Wei described the limit profile of positive least energy solutions to this system. They showed that their components blow up and decouple as $\eps\to 0$. Each component is a rescaling of the positive ground state of the stationary Schrödinger equation in $\rn$ and the concentration points of the components approach a sphere-packing position as $\eps\to 0$. 
The analysis of the effect of symmetries on the behavior of symmetric minimizing sequences for the system \eqref{eq:system} given by Theorem \ref{thm:splitting} allows to exhibit solutions for $(\mathscr S_{\eps,\mathbb{B}})$ that blow up with two different types of limit profiles: solutions whose blocks decouple as $\eps\to 0$ and solutions whose blocks remain coupled all the way up to their limit. The following results extend and complete those obtained in \cite{CS} for competitive systems.

\begin{theorem}[Solutions with decoupling blocks] \label{thm:Main2}
Assume $(B_1)$ and $(B_2)$ and let $N\geq 5$. Given a partition $\{1,\ldots,q\}=Q^+\cup Q^-$ with $Q^+\cap Q^-=\emptyset$ and a sequence of positive numbers $(\eps_n)$ with $\eps_n\to 0$, there exists a fully nontrivial solution ${\bf u}_n=(\bar{u}_{1n},\ldots,\bar{u}_{q n})$ to the system $(\mathscr{S}_{\eps_n, \mathbb{B}})$ such that the components of $\bar{u}_{hn}$ are positive if $h\in Q^+$, are nonradial and change sign if $h\in Q^-$, and have the following limit profile: For each $h=1,\ldots,q$, there exist a sequence $(\zeta_{hn})$ in $\mathbb{B}$ and a least energy fully nontrivial solution $\bar v_h$ to the cooperative system
	\begin{equation}\label{eq:subsystem_h}
		\tag{$\mathscr S_{h}$}\qquad
		\begin{cases}
			-\Delta v_i+ v_i = \dsum\limits_{j\in I_h} \beta_{ij}|v_j|^p|v_i|^{p-2}v_i, \\
			v_i\in H^1(\rn),\qquad i\in I_h,
		\end{cases}
	\end{equation}
such that, after passing to a subsequence,
	\begin{itemize}
		\item[$(a)$] the components of $\bar v_h$ are positive if $h\in Q^+$ and they are nonradial and change sign if $h\in Q^-$,
		\item[$(b)$]  $\dlim_{n\to\infty}\eps_n^{-1}\mathrm{dist}(\zeta_{hn},\partial\mathbb{B})=\infty$ and $\dlim_{n\to\infty}\eps_n^{-1}|\zeta_{hn}-\zeta_{kn}|=\infty$ if $h\neq k$, \ $h,k=1,\ldots,q$,
		\item[$(c)$] $\dlim_{n\to\infty}\left\|\widetilde{u}_{in}-v_i\right\|=0$ \ for every \ $i=1,\ldots,\ell$, \ where $\widetilde{u}_{in}(z):=u_{in}(\eps_nz +\zeta_{hn}),$
		\item[$(d)$] $\|\bar v_h\|^2=\mu_h\,\|\omega\|^2$ \ if $h\in Q^+$, \ and \ $\|\bar v_h\|^2<10\,\mu_h\,\|\omega\|^2$ \ if $h\in Q^-$.
	\end{itemize}
\end{theorem}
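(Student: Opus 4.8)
The plan is to obtain the solutions ${\bf u}_n$ to $(\mathscr S_{\eps_n,\mathbb B})$ as rescalings of symmetric minimizers for a variational problem on $\rn$, in the spirit of \cite{LW} and \cite{CS}. First I would set up, for each block $h$, the appropriate group of linear isometries of $\rn$ that encodes the prescribed sign behavior: a group $\Gamma_h$ acting trivially (so that $\bar v_h$ stays positive) when $h\in Q^+$, and a group together with a nontrivial homomorphism $\Gamma_h\to\{\pm1\}$ forcing the $\Gamma_h$-equivariant functions in that block to change sign and be nonradial when $h\in Q^-$. The key structural input is Theorem~\ref{thm:splitting}: a symmetric minimizing sequence for the full system either converges (after translation) to a fully nontrivial symmetric solution, or it splits into symmetric solutions of subsystems with strictly smaller energy. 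Because the blocks interact only competitively ($\cK_h$-entries negative), the Nehari-type least energy $c_\infty$ for the symmetric problem on $\rn$ is bounded above by $\sum_h c_h$, where $c_h$ is the least symmetric energy of the decoupled block system $(\mathscr S_h)$; and by $(B_2)$ together with the connectedness of each block's graph, each $c_h$ is attained by a \emph{fully} nontrivial $\Gamma_h$-equivariant least energy solution $\bar v_h$ of $(\mathscr S_h)$, with $\|\bar v_h\|^2=\mu_h\|\omega\|^2$ in the cooperative positive case and the bound $\|\bar v_h\|^2<10\,\mu_h\|\omega\|^2$ in the sign-changing case coming from Corollary~\ref{cor:cs} (or rather its block analogue). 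This gives item $(d)$ and, via $(a)$, the prescribed signs.

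Next I would transfer this to the $\eps_n$-problem. Rescaling $u(x)=v(x/\eps_n)$ turns $(\mathscr S_{\eps_n,\mathbb B})$ into the same nonlinearity on the dilated ball $\eps_n^{-1}\mathbb B$, whose radius tends to infinity; so minimizing the associated symmetric energy over $H^1_0(\eps_n^{-1}\mathbb B)^\ell$ with well-separated "copies" of the $\bar v_h$ placed at points $\xi_{hn}$ — chosen far apart and far from the boundary after dilation — produces a minimizing sequence whose energy converges to $\sum_h c_h$. One then runs the concentration–compactness dichotomy of Theorem~\ref{thm:splitting} on the minimizer ${\bf u}_n$: the competition between distinct blocks makes it energetically favorable for the blocks to separate (their mutual interaction term has a sign that penalizes overlap), so the minimizer cannot concentrate all blocks at one point; it must break into exactly $q$ bubbles, one per block, each an equivariant least energy solution of the corresponding $(\mathscr S_h)$. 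Writing $\zeta_{hn}=\eps_n\xi_{hn}$ and $\widetilde u_{in}(z)=u_{in}(\eps_nz+\zeta_{hn})$ gives $(b)$ and $(c)$. The a~priori estimates needed — that the concentration points stay at distance $\gg\eps_n$ from each other and from $\partial\mathbb B$ — follow from the strict energy gap: if two points came within $O(\eps_n)$, the repulsive cross term would raise the energy above $\sum_h c_h+o(1)$, and similarly if a point approached the boundary the Dirichlet constraint would cost positive energy in the limit.

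The main obstacle I anticipate is the compactness/no-vanishing step inside the rescaled ball: one must rule out that mass escapes to the boundary of $\eps_n^{-1}\mathbb B$ or spreads out, and one must show that the split pieces are genuinely \emph{fully} nontrivial within each block rather than degenerating so that some component $u_i$ with $i\in I_h$ vanishes in the limit. The first is handled by comparing with the unconstrained problem on $\rn$ and using that $c_h$ on the ball decreases to $c_h$ on $\rn$, together with a Pohozaev/boundary-term argument; the second is exactly where $(B_2)$ (connectedness of the cooperation graph of each block) enters, preventing any proper sub-block from carrying all the energy — this is the block analogue of the argument that $(B_2)$, $(B_3)$ upgrade block-wise nontrivial to fully nontrivial in Theorem~\ref{thm:existence}. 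Note that here, unlike in Theorem~\ref{thm:Main1}, no analogue of $(B_3)$ is needed and no constraint $N\neq5$ beyond $N\geq5$ appears, because we are not trying to keep the blocks coupled in the limit, so the delicate interaction estimates that forced $(B_3)$ are not required; we only need $N\geq5$ so that the rescaled bubbles have enough decay for the gluing and separation estimates to close.
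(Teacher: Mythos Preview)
Your outline tracks the paper's strategy --- take least-energy symmetric minimizers on the dilated ball $\eps_n^{-1}\mathbb B$, regard them as a minimizing sequence for $\cJ^{\bf\phi}$ on $\cN^{\bf\phi}$, and feed them to the splitting Theorem~\ref{thm:splitting} --- but your explanation of why $N\geq 5$ is needed is wrong, and this is not cosmetic: it is the hinge on which the decoupling turns.

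The restriction has nothing to do with bubble decay. The paper fixes the \emph{single} group $G':=G'_5$ of Examples~\ref{example1} (not a separate $\Gamma_h$ per block), acting on the first four real coordinates of $\rn$, and sets $\phi_h\equiv 1$ for $h\in Q^+$, $\phi_h=\theta$ for $h\in Q^-$. Then $\F(G')=\{0\}^4\times\r^{N-4}$, so $\F(G')\neq\{0\}$ \emph{precisely} when $N\geq 5$. This nonzero fixed-point space does two things. First, it provides directions along which $\phi_h$-equivariant functions can be translated without losing equivariance; this is what makes your ``place well-separated copies of $\bar v_h$'' step legal and (via Lemma~\ref{lem:energy_estimates2} and Proposition~\ref{prop:ground_states}$(ii)$) gives $c^{\bf\phi}=\sum_h c^{\bf\phi}_h$. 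Second --- and this is the actual decoupling mechanism --- Proposition~\ref{prop:ground_states}$(iii)$ says that when $\F(G)\neq 0$ and $q\geq 2$ the value $c^{\bf\phi}$ is \emph{never attained}, so in Theorem~\ref{thm:splitting} the set $Q$ must be empty and each piece of the decomposition is a single block. The separation $|\xi_{hn}-\xi_{kn}|\to\infty$ then comes directly from Theorem~\ref{thm:splitting}$(ii)$, not from an ad~hoc energy-gap comparison. There is one further step you omit: a~priori the profiles $\bar v_h$ are only $(\phi_h|G'_h)$-equivariant for some isotropy subgroup $G'_h\subset G'$, which for $h\in Q^-$ need not force a sign change; the paper uses the energy identity~$(v)$ of Theorem~\ref{thm:splitting} together with the strict inequality $\mathfrak c^\theta<|G'\xi|\,\mathfrak c^{\theta|G'_\xi}$ for $\xi\notin\F(G')$ (Proposition~\ref{prop:cooperative_nodal}) to force $G'_h=G'$, so that $\xi_{hn}\in\F(G')$ and the profiles are genuinely $\theta$-equivariant, hence nonradial and sign-changing. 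The choice $m=5$ is what produces the constant $10$ in $(d)$, via $c^{\bf\phi}_h=\mu_h\mathfrak c^\theta<2m\,\mu_h\mathfrak c$.
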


\begin{theorem}[Solutions with persisting coupling] \label{thm:Main3}
	Assume $(B_1)-(B_3)$ and let $N=4$ or $N\geq 6$. Given a partition $\{1,\ldots,q\}=Q^+\cup Q^-$ with $Q^+\cap Q^-=\emptyset$ and a sequence of positive numbers $(\eps_n)$ with $\eps_n\to 0$, there exists a fully nontrivial solution ${\bf u}_n=(\bar{u}_{1n},\ldots,\bar{u}_{q n})$ to the system $(\mathscr{S}_{\eps_n, \mathbb{B}})$ such that the components of $\bar{u}_{hn}$ are positive if $h\in Q^+$ and they are nonradial and change sign if $h\in Q^-$, and $\bf u_n$ has the following limit profile: There exists a fully nontrivial solution $\bf w=(\bar w_1,\ldots,\bar w_q)$ to the system \eqref{eq:system} such that, after passing to a subsequence,
	\begin{itemize}
		\item[$(a)$] the components of $\bar w_h$ are positive if $h\in Q^+$ and they are nonradial and change sign if $h\in Q^-$,
		\item[$(b)$] $\dlim_{n\to\infty}\left\|\widetilde{u}_{in}-w_i\right\|=0$ \ for every \ $i=1,\ldots,\ell$, \ where $\widetilde{u}_{in}(z):=u_{in}(\eps_nz),$
		\item[$(c)$] If $q=2$, then $\bf w$ satisfies the energy estimates given in \emph{Theorem} \ref{thm:Main1}$(c)$.
	\end{itemize}
\end{theorem}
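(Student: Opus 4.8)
The plan is to obtain the limit profile $\bf w$ as the strong limit, after rescaling, of symmetric least-energy solutions of \eqref{eq:system} posed on an expanding family of balls. Fix the sequence $(\eps_n)$, set $\Omega_n:=B_{1/\eps_n}(0)\subset\rn$, and note that the substitution $u_i(x)=v_i(x/\eps_n)$ turns $(\mathscr{S}_{\eps_n,\mathbb{B}})$ into the unperturbed system \eqref{eq:system} with $H^1_0(\Omega_n)^\ell$ in place of $H^1(\rn)^\ell$. I would keep the very same symmetry group $G$ and sign homomorphism used in the proof of Theorem~\ref{thm:Main1}, namely the ones that force, by equivariance, the components of a block $h\in Q^-$ to be nonradial and to change sign. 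Each $\Omega_n$ is a $G$-invariant ball centered at the origin, so the symmetric variational scheme of Theorem~\ref{thm:Main1} — minimization of $\|\cdot\|^2$ over the $G$-symmetric Nehari-type constraint set — restricts verbatim to $H^1_0(\Omega_n)^\ell$; let $c_n$ be the resulting level on $\Omega_n$ and $c_\infty$ the corresponding level on $\rn$, which by Theorem~\ref{thm:Main1} is attained at a fully nontrivial symmetric solution $\bf w$ with the prescribed sign pattern and, for $q=2$, with the energy bounds stated there.

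First I would settle existence on $\Omega_n$. Since $1<p<\frac{N}{N-2}$ forces $2p<\frac{2N}{N-2}$, the embedding $H^1_0(\Omega_n)\hookrightarrow L^{2p}(\Omega_n)$ is compact, so the symmetric minimization defining $c_n$ is attained, and by the usual Nehari-manifold argument together with the principle of symmetric criticality its minimizer — whose rescaling I denote $\widetilde{\bf u}_n$ — is a genuine solution of \eqref{eq:system} on $\Omega_n$. Assumptions $(B_1)$–$(B_3)$ enter exactly as in Theorem~\ref{thm:Main1}: $(B_1)$ makes $\widetilde{\bf u}_n$ block-wise nontrivial, and $(B_2)$, $(B_3)$ — with the same constant $C_q$ tied to the chosen symmetries — upgrade this to full nontriviality; the same sign adjustments plus the maximum principle let us take the components of $\widetilde{\bf u}_n$ in a block $h\in Q^+$ positive and those in a block $h\in Q^-$ nonradial and sign-changing. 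Undoing the rescaling gives the solution ${\bf u}_n$ of $(\mathscr{S}_{\eps_n,\mathbb{B}})$ with the required sign pattern.

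Next I would prove $c_n\to c_\infty$. One inequality is immediate: $H^1_0(\Omega_n)^\ell\hookrightarrow H^1(\rn)^\ell$ isometrically, the $G$-symmetric constraint set on $\Omega_n$ is carried into the one on $\rn$, and $\|\cdot\|^2$ agrees, so $c_n\ge c_\infty$. For the reverse bound I would truncate the minimizer $\bf w$ from Theorem~\ref{thm:Main1}: solutions of \eqref{eq:system} decay exponentially, hence the functions $\chi_n{\bf w}$, with $\chi_n$ a smooth $G$-invariant cutoff equal to $1$ on $B_{1/(2\eps_n)}(0)$ and supported in $\Omega_n$, converge to $\bf w$ in $H^1$; projecting them onto the symmetric constraint set — a continuous operation near $\bf w$ — produces admissible competitors whose $\|\cdot\|^2$ tends to $c_\infty$, so $\limsup_n c_n\le c_\infty$.

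The crux, and the main obstacle, is compactness of $(\widetilde{\bf u}_n)$. Extended by zero to $\rn$, this is a $G$-symmetric minimizing sequence on the constraint set with $\|\widetilde{\bf u}_n\|^2\to c_\infty$, and the only way it can fail to converge is by losing mass towards $\partial\Omega_n$, i.e.\ at infinity in $\rn$. I would rule this out via the splitting analysis of Theorem~\ref{thm:splitting}: because the configurations are $G$-symmetric and $G$ has been chosen with sufficiently many elements, any nontrivial piece escaping to infinity appears along a full $G$-orbit and so carries a large multiple of the least energy of some sub-problem — hence, once $(B_3)$ holds with $C_q$ matched to $G$ (the same mechanism preventing block loss in Theorem~\ref{thm:Main1}), such splitting would push $\|\widetilde{\bf u}_n\|^2$ above $c_\infty$, a contradiction. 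Therefore, along a subsequence, $\widetilde{\bf u}_n\to{\bf w}$ strongly in $H^1(\rn)^\ell$ with $\bf w$ a $G$-symmetric solution of \eqref{eq:system} and $\|{\bf w}\|^2=c_\infty$; block-wise nontriviality of the limit — carried over from the uniform lower bounds built into the constraint set — places $\bf w$ in that set, so it realizes $c_\infty$ and, by the $(B_2)$–$(B_3)$ argument, is fully nontrivial with the sign pattern of $(a)$; for $q=2$ it then obeys the two-block energy bounds of Theorem~\ref{thm:Main1}, which is why the energy statement $(c)$ is restricted to $q\le2$ — for more blocks those bounds, and the asymptotic-decay information behind them and behind Corollary~2.8, are not available. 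Finally $\widetilde{u}_{in}(z)=u_{in}(\eps_nz)$ is precisely the zero-extended rescaling, which gives $(b)$, and the maximum principle applied to $\bf w$ confirms the sign pattern in the limit. The dimension hypothesis $N=4$ or $N\ge6$ is inherited from Theorem~\ref{thm:Main1} (which requires $N\ne5$ once $q\ge2$) and from the range where the above compactness analysis applies.
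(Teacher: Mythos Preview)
Your approach matches the paper's: rescale to the expanding balls $\Omega_{\eps_n}$, take symmetric least-energy block-wise nontrivial solutions there, observe they form a minimizing sequence for $c^{\bf\phi}$ on $\cN^{\bf\phi}$, and invoke Theorem~\ref{thm:splitting} to rule out escape of mass; the paper packages these steps as Theorems~\ref{thm:eps_coupled2} and~\ref{thm:eps_coupled}.

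One misattribution worth flagging: you credit $(B_3)$ with preventing splitting, but $(B_3)$ plays no role there --- it is used only, via Theorem~\ref{thm:fully_nontrivial}, to upgrade block-wise nontrivial minimizers to fully nontrivial ones. What actually rules out splitting is the \emph{choice of group}, and the mechanism differs by $q$. For $q\geq 3$ the paper takes $G=G_\infty$ (Example~\ref{example2}); every nonzero $G$-orbit is then infinite, so property~$(ii)$ of Theorem~\ref{thm:splitting} forces $Q=\{1,\ldots,q\}$ outright, with no energy comparison needed. For $q=2$ the paper takes $G=G_6$, whose orbits are finite, and the obstruction to splitting is the strict energy inequality of Lemma~\ref{lem:energy_bounds} compared against the identity in Theorem~\ref{thm:splitting}$(v)$ --- this inequality does not depend on $(B_3)$. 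Your phrase ``$G$ has been chosen with sufficiently many elements'' captures the $G_\infty$ case well, but the parenthetical linking it to $(B_3)$ should be dropped.
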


This paper is organized as follows: In Section \ref{sec:preliminaries} we introduce the symmetric variational setting, give an accurate description of the effect of symmetries on the behavior of symmetric minimizing sequences for the system \eqref{eq:system}, and derive a condition for the existence of least energy block-wise nontrivial symmetric solutions. Section \ref{sec:fully_nontrivial} is devoted to showing that such solutions are fully nontrivial. In Section \ref{sec:cooperative} we establish existence of positive and nodal solutions for cooperative systems and estimate their energy. Similar results for positive, nodal and seminodal solutions to a system with two blocks are obtained in Section \ref{sec:two_blocks}. In Section \ref{sec:singularly_perturbed} we prove the results concerning the system of singularly perturbed equations.

\section{The profile of minimizing sequences}
\label{sec:preliminaries}

Throughout this section we assume $(B_1)$.
Let $G$ be a closed subgroup of the group $O(N)$ of linear isometries of $\rn$ and denote by $Gx:=\{gx:g\in G\}$ the $G$-orbit of $x\in\rn$. Let $\phi:G\to\z_2:=\{-1,1\}$ be a continuous homomorphism of groups with the following property:
\begin{itemize}
\item[$(A_\phi)$] If $\phi$ is surjective, then there exists $\zeta\in\rn$ such that $(\ker\phi)\zeta\neq G\zeta$,
\end{itemize} 
where $\ker\phi:=\{g\in G:\phi(g)=1\}$. A function $u:\rn\to\r$ is called \emph{$G$-invariant} if it is constant on $Gx$ for every $x\in\rn$ and will be called \emph{$\phi$-equivariant} if
	\[ u(gx)=\phi(g)u(x) \text{ \ for all \ }g\in G, \ x\in\rn.\]
Define 
	\[H^1(\rn)^\phi:=\{u\in H^1(\rn): u\text{ is }\phi\text{-equivariant}\}.\]
Assumption $(A_\phi)$ guarantees that $H^1(\rn)^\phi$ has infinite dimension, see \cite{JMW}. If $\phi\equiv 1$ is the trivial homomorphism, then $H^1(\rn)^\phi$ is the space of $G$-invariant functions in $H^1(\rn)$. On the other hand, if $\phi$ is surjective, then every nontrivial function $u\in H^1(\rn)^\phi$ is nonradial and changes sign.

If $K$ is a closed subgroup of $G$ we write $\phi|K:K\to\z_2$ for the restriction of $\phi$ to $K$. Note that $\phi|K$ satisfies $(A_{\phi|K})$ if $\phi$ satisfies $(A_\phi)$.
	
Recall the notation in assumption $(B_1)$. Fix a closed subgroup $G$ of $O(N)$ and, for each $h=1,\ldots,q$, fix a continuous homomorphism $\phi_h:G\to\z_2$ satisfying $(A_{\phi_h})$. Set $\bf\phi:=(\phi_1,\ldots,\phi_q)$ and define $\phi_i := \phi_h$ for all $i\in I_h$. 

If $Q=\{h_1,\ldots,h_m\}$ is a nonempty ordered subset of $\{1,\ldots,q\}$ and $K$ is a closed subgroup of $G$, we set $I_Q:=\displaystyle\bigcup_{h\in Q}I_h$ and consider the system
\begin{equation}\label{S_Q}
\tag{$\mathscr S^{\bf\phi|K}_Q$} \qquad
\begin{cases}
-\Delta u_i+ u_i = \dsum_{j\in I_Q}\beta_{ij}|u_j|^p|u_i|^{p-2}u_i, \\
u_i\in H^1(\rn)^{\phi_i|K},\qquad i\in I_Q.
\end{cases}
\end{equation}
  Define
\begin{align*}
 \cH^{\bf\phi|K}_Q&:=\prod_{i\in Q}H^1(\rn)^{\phi_i|K},
\end{align*}
where the factors are ordered following the natural order of the indices $i\in Q$. So a point in $\cH^{\bf\phi|K}_Q$ will be written as  
\[\bf u=(\bar u_{h_1},\ldots,\bar u_{h_m})\qquad\text{with \ }\bar u_h=(u_{\ell_{h-1}+1},\ldots,u_{\ell_h}).\] Abusing notation, we denote their norms by
	\[\|\bar u_h\|:=\Big(\dsum_{i\in I_h}\|u_i\|^2\Big)^{1/2}\qquad\text{and}\qquad\|\bf u\|:=\Big(\dsum_{h\in Q}\|\bar u_h\|^2\Big)^{1/2},\]
where $\|u_i\|$ is the standard norm of $u_i$ in $H^1(\rn)$.

Let $\cJ^{\bf\phi|K}_Q:\cH^{\bf\phi|K}_Q\to\r$ be the functional given by 
	\[\cJ^{\bf\phi|K}_Q(\bf u) := \frac{1}{2}\dsum_{i\in I_Q}\|u_i\|^2 - \frac{1}{2p}\dsum_{i,j\in I_Q}\beta_{ij}\irn |u_i|^p|u_j|^p.\]
This functional is of class $\cC^1$ and its critical points are the solutions to the system \eqref{S_Q}. The block-wise nontrivial ones belong to the set
	\[\cN^{\bf\phi|K}_Q:= \{\bf u\in\cH^{\bf\phi|K}_Q:\|\bar u_h\|\neq 0\text{ \ and \ }\partial_{\bar u_h}\cJ^{\bf\phi|K}_Q(\bf u)\bar u_h=0 \text{ \ for all \ } h\in Q\}.\]
Note that, for each $h\in Q$
\begin{align*}
&\partial_{\bar u_h}\cJ^{\bf\phi|K}_Q(\bf u)\bar u_h=\|\bar u_h\|^2 - \dsum_{k\in Q}\,\dsum_{(i,j)\in I_h\times I_k}\irn\beta_{ij}|u_i|^p|u_j|^p,
\end{align*}
and that
	\[\cJ^{\bf\phi|K}_Q(\bf u)= \dfrac{p-1}{2p}\|\bf u\|^2\qquad\text{if \ }\bf u\in\cN^{\bf\phi|K}_Q.\]
Set
\begin{equation*}
c^{\bf\phi|K}_Q := \inf_{\bf u\in \cN^{\bf\phi|K}_Q}\cJ^{\bf\phi|K}_Q(\bf u).
\end{equation*}

\begin{notation} \label{notation}
If $Q=\{h\}$ we omit the curly brackets and write
\[(\mathscr S^{\bf\phi|K}_h),\qquad \cH^{\bf\phi|K}_h,\qquad \cJ^{\bf\phi|K}_h,\qquad \cN^{\bf\phi|K}_h,\qquad c^{\bf\phi|K}_h.\]
If  $Q=\{1,\ldots,q\}$ we omit the subscript $Q$ and write 
\[(\mathscr S^{\bf\phi|K}),\qquad \cH^{\bf\phi|K},\qquad \cJ^{\bf\phi|K},\qquad \cN^{\bf\phi|K},\qquad c^{\bf\phi|K}.\]
If $K=G$ we write $\bf\phi$ instead of $\bf\phi|G$, if $\phi|K\equiv 1$ we replace the superscript $\phi|K$ by $K$, and if $K$ is the trivial group we omit the superscript.
\end{notation}

For $\bf u=(\bar{u}_{h_1},\ldots,\bar{u}_{h_m})\in\cH^{\bf\phi|K}_Q$ and $\bf s:=(s_{1},\ldots,s_{m})\in(0,\infty)^m$ we set  \[\bf s \bf u:=(s_1\bar{u}_{h_1},\ldots,s_m\bar{u}_{h_m}).\]
Our next result provides helpful information about the Nehari-type set $\cN^{\bf\phi|K}_Q$.
	
\begin{lemma}\label{lem:N}
The following statements hold true:
\begin{itemize}
\item[$(i)$] $\cN^{\bf\phi|K}_Q\neq\emptyset$ and there exists $d_0>0$ such that
\[\min_{1\leq i\leq m}	\|\bar{u}_{h_i}\|^2 > d_0\quad\text{for every \ }\bf u=(\bar{u}_{h_1},\ldots,\bar{u}_{h_m}) \in \cN^{\bf\phi|K}_Q.\]
Therefore $\cN^{\bf\phi|K}_Q$ is a closed subset of $\cH^{\bf\phi|K}_Q$ and $c^{\bf\phi|K}_Q >0$.
\item[$(ii)$] If the coordinates of $\bf u\in \cH^{\bf\phi|K}_Q$ satisfy
\begin{equation} \label{eq:N}
\sum_{k\in Q}\,\sum_{(i,j)\in I_h\times I_k}\irn\beta_{ij}|u_i|^p|u_j|^p>0
\end{equation}
for every $h\in Q$, then there exists a unique $\bf s_{\bf u} \in (0,\infty)^{m},$ such that $\bf s_{\bf u}\bf u \in \cN^{\bf\phi|K}_Q$. Furthermore,  
\[
\cJ^{\bf\phi|K}_Q(\bf s_{\bf u} \bf u) = \max_{{\bf s} \in (0,\infty)^m}\cJ^{\bf\phi|K}_Q(\bf s \bf u).
\]
\item[$(iii)$] There exists $d_{\bf\phi}>0$ independent of $(\beta_{ij})$ such that
$$c^{\bf\phi}\leq d_{\bf\phi}\Big(\min_{h=1,\ldots,q}\max_{i\in I_h}\beta_{ii}\Big)^{-\frac{1}{p-1}}.$$
\end{itemize}
\end{lemma}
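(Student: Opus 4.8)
\emph{Overall plan.} I would prove the three assertions via a single device: the fibering map $\bf s\mapsto\cJ^{\bf\phi|K}_Q(\bf s\bf u)$ restricted to the open positive orthant. Fix $\bf u\in\cH^{\bf\phi|K}_Q$ whose coordinates satisfy \eqref{eq:N} for every $h\in Q$, write $Q=\{h_1,\dots,h_m\}$, set $a_r:=\|\bar u_{h_r}\|^2$ and $b_{rt}:=\dsum_{(i,j)\in I_{h_r}\times I_{h_t}}\int_{\rn}\beta_{ij}|u_i|^p|u_j|^p$, and consider
\[\psi(\bf s):=\cJ^{\bf\phi|K}_Q(\bf s\bf u)=\tfrac12\dsum_{r=1}^m s_r^2a_r-\tfrac1{2p}\dsum_{r,t}b_{rt}s_r^ps_t^p,\qquad \bf s\in(0,\infty)^m.\]
Differentiating $\cJ^{\bf\phi|K}_Q(\bf s\bf u)$ with respect to $s_r$ and using $s_r>0$ shows that $\bf s\bf u\in\cN^{\bf\phi|K}_Q$ iff $\nabla\psi(\bf s)=0$. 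The matrix $(b_{rt})$ is symmetric (since $(\beta_{ij})$ is), and by $(B_1)$ its off-diagonal entries are $\le 0$; hence \eqref{eq:N}, which asserts that every row sum of $(b_{rt})$ is positive, forces $b_{rr}>\sum_{t\ne r}|b_{rt}|\ge 0$. So $(b_{rt})$ is a symmetric, strictly diagonally dominant matrix with positive diagonal, hence positive definite: $\sum_{r,t}b_{rt}\tau_r\tau_t\ge c_0|\tau|^2$ for some $c_0>0$. Since $N\ge 4$ gives $1<p<\frac{N}{N-2}\le 2$, we have $2<2p<2^*$, and $\psi(\bf s)\le \tfrac12\sum_r a_rs_r^2-\tfrac{c_0}{2p}\sum_r s_r^{2p}$; as $2p>2$, $\psi$ is bounded above, $\psi(\bf s)\to-\infty$ as $|\bf s|\to\infty$, and near the origin $\psi(\bf s)=\tfrac12\sum_r a_rs_r^2+O(|\bf s|^{2p})>0$. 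Therefore $\psi$ attains its supremum on $[0,\infty)^m$ at some $\bf s^\ast\ne 0$ with bounded coordinates. Finally $\partial_{s_r}\psi(\bf s)=s_ra_r-s_r^{p-1}\sum_t b_{rt}s_t^p$, and since $p<2$ and the only possibly nonzero inter-block contributions to $\sum_t b_{rt}s_t^p$ are $\le 0$, the term $s_ra_r$ (or, if it is cancelled, a positive $s_r^{p-1}$-term) dominates for small $s_r>0$ with the remaining coordinates bounded; hence $\partial_{s_r}\psi>0$ there, so $\bf s^\ast$ has no vanishing coordinate, i.e. $\bf s^\ast\in(0,\infty)^m$ and $\nabla\psi(\bf s^\ast)=0$.

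\emph{Uniqueness and conclusion of (ii).} Suppose $\bf s,\bf s'\in(0,\infty)^m$ both satisfy $\nabla\psi=0$. Dividing the $r$-th equation by $s_r^{p-1}$ (resp.\ $(s_r')^{p-1}$) and putting $\tau_r:=s_r^p$, $\tau_r':=(s_r')^p$, $\alpha:=\frac{2-p}{p}\in(0,1)$, we get $a_r\tau_r^{\alpha}=\sum_t b_{rt}\tau_t$ and $a_r(\tau_r')^{\alpha}=\sum_t b_{rt}\tau_t'$ for every $r$. Let $\lambda:=\max_r\tau_r/\tau_r'$; exchanging $\bf s$ and $\bf s'$ if necessary we may assume $\lambda\ge 1$, and let $r_0$ attain the maximum, so $\tau_{r_0}=\lambda\tau_{r_0}'$ and $\tau_t\le\lambda\tau_t'$ for all $t$. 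Since $b_{r_0t}\le 0$ for $t\ne r_0$, multiplying $\tau_t\le\lambda\tau_t'$ by $b_{r_0t}$ reverses the inequality, whence $\sum_t b_{r_0t}\tau_t\ge\lambda\sum_t b_{r_0t}\tau_t'$, that is, $a_{r_0}\lambda^{\alpha}(\tau_{r_0}')^{\alpha}\ge\lambda\,a_{r_0}(\tau_{r_0}')^{\alpha}$, so $\lambda^{\alpha}\ge\lambda$. As $\alpha<1$ and $\lambda\ge 1$ this forces $\lambda=1$, i.e.\ $\tau_r\le\tau_r'$ for all $r$; by symmetry $\tau_r'\le\tau_r$, so $\bf s=\bf s'$. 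Thus $\psi$ has a unique critical point $\bf s_{\bf u}\in(0,\infty)^m$, which by the previous paragraph is its global maximum, and $\bf s_{\bf u}\bf u$ is the unique rescaling of $\bf u$ lying in $\cN^{\bf\phi|K}_Q$; this proves (ii).

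\emph{Assertion (i).} For nonemptiness, pick for each $h\in Q$ a nontrivial $\varphi_h\in H^1(\rn)^{\phi_h|K}$ (possible since $(A_{\phi_h|K})$ holds and $H^1(\rn)^{\phi_h|K}$ is infinite-dimensional). Multiplying by radial cut-offs and dilating—operations that preserve $\phi_h|K$-equivariance because $K\subseteq O(N)$—we may assume the $\varphi_h$ have pairwise disjoint supports contained in spherical shells. Taking $\bf u$ with $\bar u_h$ having $i_h$-th component $\varphi_h$ (for one fixed $i_h\in I_h$) and all other components $0$ kills the cross-block integrals, so the left side of \eqref{eq:N} equals $\beta_{i_hi_h}\int_{\rn}|\varphi_h|^{2p}>0$; by (ii), $\bf s_{\bf u}\bf u\in\cN^{\bf\phi|K}_Q$. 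For the lower bound, let $\bf u\in\cN^{\bf\phi|K}_Q$; discarding the nonpositive inter-block terms in $\|\bar u_h\|^2=\sum_k\sum_{(i,j)\in I_h\times I_k}\int_{\rn}\beta_{ij}|u_i|^p|u_j|^p$ and applying Hölder's inequality, the subcritical embedding $H^1(\rn)\hookrightarrow L^{2p}(\rn)$, and the power-mean bound $\sum_{i\in I_h}\|u_i\|^p\le |I_h|^{1-p/2}\|\bar u_h\|^p$ (using $p<2$), we get $\|\bar u_h\|^2\le C_h\|\bar u_h\|^{2p}$ for a constant $C_h=C_h((\beta_{ij}))$; since $\|\bar u_h\|\ne 0$ this gives $\|\bar u_h\|^2\ge C_h^{-1/(p-1)}$, and one takes $d_0$ below $\min_{h\in Q}C_h^{-1/(p-1)}$. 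Closedness of $\cN^{\bf\phi|K}_Q$ then follows because the defining equations are continuous and the strict bound $\|\bar u_h\|^2\ge d_0$ passes to limits (so the limit still has every block nontrivial), and on $\cN^{\bf\phi|K}_Q$ we have $\cJ^{\bf\phi|K}_Q(\bf u)=\frac{p-1}{2p}\|\bf u\|^2\ge\frac{p-1}{2p}\,m\,d_0>0$, whence $c^{\bf\phi|K}_Q>0$.

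\emph{Assertion (iii) and the main obstacle.} Redo the nonemptiness construction with $Q=\{1,\dots,q\}$, $K=G$, but choose $i_h\in I_h$ with $\beta_{i_hi_h}=\max_{i\in I_h}\beta_{ii}$ and replace $\varphi_h$ by its positive multiple $t_h\varphi_h$ that puts $\bf u$ on $\cN^{\bf\phi}$; as only the self-interaction of $\varphi_h$ survives, $t_h$ solves $t_h^2\|\varphi_h\|^2=\beta_{i_hi_h}t_h^{2p}\int_{\rn}|\varphi_h|^{2p}$, so $t_h^2\|\varphi_h\|^2=\beta_{i_hi_h}^{-1/(p-1)}\big(\|\varphi_h\|^{2p}/\int_{\rn}|\varphi_h|^{2p}\big)^{1/(p-1)}$. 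Since $\beta_{i_hi_h}\ge\min_h\max_{i\in I_h}\beta_{ii}$,
\[c^{\bf\phi}\le \cJ^{\bf\phi}(\bf u)=\frac{p-1}{2p}\dsum_{h=1}^q t_h^2\|\varphi_h\|^2\le d_{\bf\phi}\Big(\min_{h=1,\ldots,q}\max_{i\in I_h}\beta_{ii}\Big)^{-\frac1{p-1}},\qquad d_{\bf\phi}:=\frac{p-1}{2p}\dsum_{h=1}^q\Big(\frac{\|\varphi_h\|^{2p}}{\int_{\rn}|\varphi_h|^{2p}}\Big)^{\frac1{p-1}},\]
and $d_{\bf\phi}$ depends only on the once-and-for-all chosen functions $\varphi_h$ (hence on $G$, $\bf\phi$, $N$, $p$), not on $(\beta_{ij})$. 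The step I expect to be the real heart of the matter is the uniqueness in (ii): that is precisely where both structural hypotheses are used essentially—the nonpositivity of the inter-block coefficients, which supplies the monotonicity that drives the $\lambda$-argument, and $p\ne 1$, which makes $\lambda^{(2-p)/p}\ge\lambda$ impossible for $\lambda>1$. Everything else is routine: the coercivity and boundary-behavior estimates for $\psi$, the Hölder–Sobolev lower bound, and the disjoint-support bookkeeping.
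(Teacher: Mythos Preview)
Your argument is correct and follows the standard fibering-map route that the paper's cited references (\cite[Lemma 2.2]{cp}, \cite[Lemma 2.3]{CS}) use; the paper itself gives no proof beyond those citations. Your uniqueness step for $(ii)$ via the $\lambda$-comparison---exploiting that the off-diagonal $b_{rt}$ are nonpositive and that $\alpha=(2-p)/p\in(0,1)$---is exactly the kind of monotonicity argument those references employ.

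One substantive difference is worth flagging. Your constant in $(iii)$ is tied to one fixed choice of disjointly supported test functions $(\varphi_h)$, whereas the paper sets
\[
d_{\bf\phi}:=\frac{p-1}{2p}\inf_{(v_1,\ldots,v_q)\in\cU^{\bf\phi}}\sum_{h=1}^q\|v_h\|^2,
\]
where $\cU^{\bf\phi}$ is the set of \emph{all} $q$-tuples $(v_1,\ldots,v_q)$ with $v_h\in H^1(\rn)^{\phi_h}\smallsetminus\{0\}$, $\|v_h\|^2=\int_{\rn}|v_h|^{2p}$, and $v_hv_k=0$ for $h\neq k$. Both choices verify the lemma as stated, but the paper's infimum is strictly sharper, and this matters downstream: $d_{\bf\phi}$ enters the structural hypothesis $(B_3^{\bf\phi})$ through $C_{\bf\phi}=\bigl(p\,d_{\bf\phi}/((p-1)(S^{\bf\phi})^{p/(p-1)})\bigr)^p$, so a smaller $d_{\bf\phi}$ weakens the assumption needed to guarantee that block-wise nontrivial minimizers are fully nontrivial. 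Your version of $(iii)$ is correct, but it yields a suboptimal constant for that application.
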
	
	
\begin{proof}
These results are obtained by adapting the proofs of \cite[Lemma 2.2]{cp} and \cite[Lemma 2.3]{CS} in the obvious way. The constant in statement $(iii)$ is defined as
$$d_{\bf\phi}:=\dfrac{p-1}{2p}\inf_{(v_1,\ldots,v_q)\in\cU^{\bf\phi}}\sum_{h=1}^q\|v_h\|^2,$$
where \ $\cU^{\bf\phi}:=\{(v_1,\ldots,v_q):v_h\in H^1(\rn)^{\bf{\phi}_h}, \ v_h\neq 0, \ \|v_h\|^2=\irn|v_h|^{2p}, \ v_hv_k=0\text{ if }h\neq k\}$.
\end{proof} 

It is shown in \cite[Lemma 2.4]{cp} that any minimizer of $\cJ^{\bf\phi|K}_Q$ on $\cN^{\bf\phi|K}_Q$ is a critical point of $\cJ^{\bf\phi|K}_Q$, i.e., a block-wise nontrivial solution to \eqref{S_Q}. We call it a \textbf{least energy block-wise nontrivial solution of \eqref{S_Q}}.

\begin{remark} \label{rem:positive}
\emph{If $\phi_h\equiv 1$ for every $h$ in some subset $\overline Q$ of $Q$ and $\bf u$ is a least energy block-wise nontrivial solution to \eqref{S_Q}, then $|u_i|\in H^1(\rn)^{K}$ for every $i\in I_{\overline Q}$ and replacing $u_i$ by $|u_i|$ we obtain a least energy block-wise nontrivial solution to \eqref{S_Q} whose $i$-th component is positive for every $i\in I_{\overline Q}$.}
\end{remark}
	
Let $G_\xi:=\{g \in G:g\xi = \xi\}$ be the $G$-isotropy subgroup of the point $\xi\in\rn$. Recall that the $G$-orbit $G\xi$ of $\xi$ is $G$-homeomorphic to the homogeneous space $G/G_\xi$. So they have the same cardinality, i.e., $|G/G_\xi| = |G\xi|$.
	
\begin{lemma} \label{lem:energy_estimates2}
If $Q=Q_1\cup Q_2$ with $Q_1\cap Q_2=\emptyset$ and $\xi_1,\xi_2\in\rn$ are such that $G\xi_1\neq G\xi_2$, then
		\[
		c^{\bf\phi}_Q\leq |G\xi_1|\, c^{\bf\phi|G_{\xi_1}}_{Q_1}+|G\xi_2|\, c^{\bf\phi|G_{\xi_2}}_{Q_2}.
		\]
\end{lemma}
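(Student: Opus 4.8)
The plan is to build a test element for $\cN^{\bf\phi}_Q$ by placing copies of optimal solutions for the two subsystems along the disjoint $G$-orbits $G\xi_1$ and $G\xi_2$, with the copies pushed far apart so that the interaction between blocks sitting on different orbits becomes negligible. Concretely, let $\bf v=(\bar v_h)_{h\in Q_1}\in\cN^{\bf\phi|G_{\xi_1}}_{Q_1}$ and $\bf v'=(\bar v'_h)_{h\in Q_2}\in\cN^{\bf\phi|G_{\xi_2}}_{Q_2}$ be (almost) minimizers for $c^{\bf\phi|G_{\xi_1}}_{Q_1}$ and $c^{\bf\phi|G_{\xi_2}}_{Q_2}$ respectively, which may be taken compactly supported after a density truncation. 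For $R>0$ large, pick points $y_1=R\xi_1$, $y_2=R\xi_2$ and define, for each $i\in I_{Q_1}$,
\[\widehat u_i:=\sum_{g\in G/G_{\xi_1}}\phi_i(g)\,v_i(g^{-1}(\,\cdot\,-g y_1)),\]
and symmetrically for $i\in I_{Q_2}$ using $\bf v'$, $y_2$, and $G/G_{\xi_2}$. One checks that $\widehat u_i\in H^1(\rn)^{\phi_i}$ (the $\phi$-equivariance of the summands over a transversal of $G/G_{\xi_i}$ forces global $\phi_i$-equivariance, this is the standard construction), so $\widehat{\bf u}\in\cH^{\bf\phi}_Q$.

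Next I would analyze the energies. Since $G\xi_1\neq G\xi_2$, for $R$ large the $|G\xi_1|$ translated supports coming from the first orbit are pairwise disjoint and disjoint from the $|G\xi_2|$ translated supports from the second orbit; hence $\|\widehat u_i\|^2=|G\xi_1|\,\|v_i\|^2$ for $i\in I_{Q_1}$ and likewise for $I_{Q_2}$, while the quartic terms split as
\[\sum_{(i,j)\in I_h\times I_k}\irn\beta_{ij}|\widehat u_i|^p|\widehat u_j|^p=|G\xi_1|\sum_{(i,j)\in I_h\times I_k}\irn\beta_{ij}|v_i|^p|v_j|^p\]
whenever $h,k\in Q_1$ (and the analogous identity for $Q_2$), with the cross terms $h\in Q_1$, $k\in Q_2$ vanishing identically once $R$ is large enough. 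Therefore the block-derivative conditions $\partial_{\bar u_h}\cJ^{\bf\phi}_Q(\widehat{\bf u})\bar u_h=0$ hold exactly on each block, so $\widehat{\bf u}\in\cN^{\bf\phi}_Q$ already (no rescaling needed), and
\[\cJ^{\bf\phi}_Q(\widehat{\bf u})=|G\xi_1|\,\cJ^{\bf\phi|G_{\xi_1}}_{Q_1}(\bf v)+|G\xi_2|\,\cJ^{\bf\phi|G_{\xi_2}}_{Q_2}(\bf v')\leq |G\xi_1|\,(c^{\bf\phi|G_{\xi_1}}_{Q_1}+\delta)+|G\xi_2|\,(c^{\bf\phi|G_{\xi_2}}_{Q_2}+\delta)\]
for the approximating data; taking the infimum over $\widehat{\bf u}\in\cN^{\bf\phi}_Q$ and then $\delta\to 0$ gives the claim. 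Here one uses that $G\xi_i$ is a finite orbit — the isotropy $G_{\xi_i}$ has finite index, so $|G/G_{\xi_i}|=|G\xi_i|<\infty$ — which is what makes the finite sum over $G/G_{\xi_i}$ well defined; this is precisely the ``finite orbit'' feature emphasized in the introduction.

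I expect the main obstacle to be bookkeeping rather than a genuine difficulty: verifying that the translated copies over a coset transversal of $G/G_{\xi_i}$ really are pairwise disjoint for $R$ large and, more delicately, that the construction $\widehat u_i(x)=\sum_{g G_{\xi_i}}\phi_i(g)v_i(g^{-1}(x-gy_i))$ is well defined independently of the choice of transversal and genuinely $\phi_i$-equivariant — this requires that $v_i$, as an element of $H^1(\rn)^{\phi_i|G_{\xi_i}}$, transform correctly under $G_{\xi_i}$ so that replacing $g$ by $gh$ with $h\in G_{\xi_i}$ leaves the summand unchanged. One also has to be slightly careful that a minimizer need not exist for the subsystems, which is why I work with $\delta$-almost minimizers and a compact-support truncation (legitimate since the truncated element can be rescaled back onto $\cN$ at arbitrarily small energy cost, by Lemma~\ref{lem:N}$(ii)$ and continuity). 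Modulo these standard verifications, the estimate follows by simply evaluating $\cJ^{\bf\phi}_Q$ on the glued configuration.
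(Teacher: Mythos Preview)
Your proposal is correct and follows essentially the same approach as the paper: both proofs build a $\bf\phi$-equivariant test configuration by summing translates of (almost) optimal elements for the two subsystems over coset representatives of $G/G_{\xi_\nu}$, placed along the rescaled orbits $R\,G\xi_1$ and $R\,G\xi_2$ so that all supports are pairwise disjoint, and then read off the energy. The only cosmetic differences are that the paper lets the cut-off radius grow with the scaling parameter $n$ (so the truncation disappears in the limit and a block-wise rescaling $t_{hn}\to 1$ handles the Nehari constraint), whereas you truncate first and rescale onto $\cN$ before gluing; also, the paper explicitly disposes of the case $|G\xi_1|=\infty$ or $|G\xi_2|=\infty$ as trivial, which you only allude to implicitly.
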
	
	
\begin{proof}
To simplify notation, assume $Q_1=\{1,\ldots,m\}$ and $Q_2=\{m+1,\ldots,q\}$ with $1\leq m<q$. The argument is easily adapted to the general case.

If either $|G\xi_1|=\infty$ or $|G\xi_2|=\infty$, the statement is obvious. So let us assume that both $G$-orbits are finite. Let $\bf u_1=(\bar u_1,\ldots,\bar u_m)\in\cN^{\bf\phi|G_{\xi_1}}_{Q_1}$ and $\bf u_2=(\bar u_{m+1},\ldots,\bar u_q)\in\cN^{\bf\phi|G_{\xi_2}}_{Q_2}$. Fix $r>0$ such that
		\[
		|z_1-z_2|>2r\qquad\text{if \ }z_1,z_2\in G\xi_1\cup G\xi_2\text{ \ and \ } z_1\neq z_2.
		\]
		Let $\chi\in\cC_c^\infty(\rn)$ be a radial cut-off function such that $\chi(x)=1$ if $|x|\leq\frac{r}{2}$ and $\chi(x)=0$ if $|x|\geq r$, and for each $n\in\n$ define $\bf u_{1n}:=(\bar u_{1n},\ldots,\bar u_{mn})$ and $\bf u_{2n}:=(\bar u_{m+1\,n},\ldots,\bar u_{q n})$ by setting
		\begin{equation*}
			\bar u_{hn}(x):=\chi\left(\frac{x-n\xi_\nu}{n}\right)\bar u_h(x-n\xi_\nu)\qquad\text{for \ }h\in Q_\nu,\quad \nu=1,2.
		\end{equation*}
Then, $\bar u_{hn}\in \cH^{\bf\phi|G_{\xi_\nu}}_{h}$ and, as
\[\sum_{k\in Q_\nu}\sum_{(i,j)\in I_h\times I_k}\irn\beta_{ij}|u_i|^p|u_j|^p=\|\bar u_h\|^2\geq d_0>0,\]
Lemma \ref{lem:N} \ yields $\bf t^1_{n}=(t_{1n},\ldots,t_{mn})\in(0,\infty)^m$ and $\bf t^2_{n}=(t_{m+1\,n},\ldots,t_{q n})\in(0,\infty)^{q-m}$ such that ${\bf t^1_{n}\bf u_{1n}\in\cN^{\bf\phi|G_{\xi_1}}_{Q_1}}$ and $\bf t^2_{n}\bf u_{2n}\in\cN^{\bf\phi|G_{\xi_2}}_{Q_2}$ for every $n$ sufficiently large, and $t_{in}\to 1$ as $n\to\infty$. Define 
		\begin{equation*}
			\wt u_{in}(x):=\dsum_{[g]\in G/G_{\xi_\nu}}\phi_i(g)t_{hn}u_{in}(g^{-1}x) \qquad\text{if \ } i\in I_h, \text{ \ and \ } h\in Q_\nu.
		\end{equation*}
		Then,  $\wt u_{in}$  is well defined and $\wt u_{in}(gx)=\phi_i(g)\wt u_{in}(x)$ for every $g\in G$, \ $x\in\rn$ \ and \ $i=1,\ldots, \ell$. 
		Since \ ${\supp(\bar u_{hn}\circ g^{-1})\subset B_{nr}(g n\xi_\nu)}$ if $h\in Q_\nu$ and the balls $B_{nr}(g n\xi_\nu)$ with $[g]\in G/G_{\xi_\nu}$ and $\nu=1,2$ are pairwise disjoint, we have that $\wt{\bf u}_n:=(\wt u_{1n},\ldots,\wt u_{\ell n})\in\cN^{\bf\phi}$ and 
		\[
		\dsum_{i\in I_h}\|\wt u_{in}\|^2=|G\xi_\nu|\,|t_{hn}|^2\|\bar u_{hn}\|^2\to|G\xi_\nu|\,\|\bar u_{h}\|^2\qquad\text{if \ }h\in Q_\nu,\ \nu=1,2.
		\]
		Hence,
		\begin{align*}
			c^{\bf\phi} &\leq\dlim_{n\to\infty}\cJ^{\bf\phi}(\wt{\bf u}_n)=\dlim_{n\to\infty}\frac{p-2}{2p}\dsum_{h=1}^q\dsum_{i\in I_h}\|\wt u_{in}\|^2\\
			&=|G\xi_1|\frac{p-2}{2p}\dsum_{h=1}^m\|\bar u_{h}\|^2+|G\xi_2|\frac{p-2}{2p}\dsum_{h=m+1}^q\|\bar u_{h}\|^2\\
			&=|G\xi_1|\cJ^{\bf\phi|G_{\xi_1}}_{Q_1}(\bf u_1)+|G\xi_2|\cJ^{\bf\phi|G_{\xi_2}}_{Q_2}(\bf u_2).
		\end{align*}
		As $\bf u_1\in\cN^{\bf\phi|G_{\xi_1}}_{Q_1}$ and $\bf u_2\in\cN^{\bf\phi|G_{\xi_2}}_{Q_2}$ were arbitrarily chosen, the proof is complete.
	\end{proof}		
	
Consider the \emph{$G$-fixed-point space}
	\[\F(G):=\{x\in\rn:gx=x\text{ for all }g\in G\}.\]	
	
	\begin{proposition} \label{prop:ground_states}
	The following statements hold true:
		\begin{itemize}
			\item[$(i)$] $c^{\bf\phi}\geq\dsum_{h=1}^qc^{\bf\phi}_{h}$.
			\item[$(ii)$]  If $\F(G)\neq 0$, then
			$c^{\bf\phi}=\dsum_{h=1}^qc^{\bf\phi}_{h}$.			
			\item[$(iii)$] If $q\geq 2$ and $c^{\bf\phi} = \dsum_{h=1}^q c_{h}^{\bf\phi}$, then $c^{\bf\phi}$ is not attained.
		\end{itemize}
	\end{proposition}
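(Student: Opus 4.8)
The plan is to handle the three statements in turn, reducing everything to the one-block systems $(\mathscr S^{\bf\phi}_h)$.

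\emph{For $(i)$} I would take an arbitrary $\bf u=(\bar u_1,\ldots,\bar u_q)\in\cN^{\bf\phi}$ and, block by block, rescale it onto the single-block Nehari set $\cN^{\bf\phi}_h$ without raising the energy. Writing $A_h:=\|\bar u_h\|^2$, $B_h:=\sum_{(i,j)\in\cI_h}\beta_{ij}\irn|u_i|^p|u_j|^p$ and $C_h:=\sum_{(i,j)\in\cK_h}\beta_{ij}\irn|u_i|^p|u_j|^p$, membership in $\cN^{\bf\phi}$ gives $A_h=B_h+C_h$, while $(B_1)$ forces $C_h\le 0$; hence $B_h\ge A_h\ge d_0>0$ by Lemma \ref{lem:N}$(i)$. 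Lemma \ref{lem:N}$(ii)$ applied to $(\mathscr S^{\bf\phi}_h)$ then yields a unique $s_h>0$ with $s_h\bar u_h\in\cN^{\bf\phi}_h$ maximizing $s\mapsto\cJ^{\bf\phi}_h(s\bar u_h)=\tfrac{s^2}{2}A_h-\tfrac{s^{2p}}{2p}B_h$, and a one-line computation gives $s_h^{2p-2}=A_h/B_h\le 1$ and $\cJ^{\bf\phi}_h(s_h\bar u_h)=\tfrac{p-1}{2p}s_h^2A_h\le\tfrac{p-1}{2p}\|\bar u_h\|^2$. Therefore $c^{\bf\phi}_h\le\tfrac{p-1}{2p}\|\bar u_h\|^2$; summing over $h$ and using $\cJ^{\bf\phi}(\bf u)=\tfrac{p-1}{2p}\|\bf u\|^2=\tfrac{p-1}{2p}\sum_h\|\bar u_h\|^2$ gives $\sum_h c^{\bf\phi}_h\le\cJ^{\bf\phi}(\bf u)$, and taking the infimum over $\cN^{\bf\phi}$ proves $(i)$.

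\emph{For $(ii)$}, by $(i)$ it suffices to show $c^{\bf\phi}\le\sum_h c^{\bf\phi}_h$. Since $\F(G)$ is a nonzero linear subspace, choose $q$ pairwise distinct nonzero points $\xi_1,\ldots,\xi_q\in\F(G)$; each has $|G\xi_h|=1$ and $G_{\xi_h}=G$, hence $\bf\phi|G_{\xi_h}=\bf\phi$. Applying Lemma \ref{lem:energy_estimates2} with these points — whose proof carries over verbatim from a bipartition to the partition $\{1\},\ldots,\{q\}$ of $\{1,\ldots,q\}$, or one may simply iterate it — and noting that equivariance is preserved under translation along $\F(G)$, I obtain $c^{\bf\phi}\le\sum_{h=1}^q|G\xi_h|\,c^{\bf\phi|G_{\xi_h}}_h=\sum_{h=1}^q c^{\bf\phi}_h$, which together with $(i)$ gives equality.

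\emph{For $(iii)$}, assume $q\ge 2$ and $c^{\bf\phi}=\sum_h c^{\bf\phi}_h$, and suppose toward a contradiction that the infimum is attained at some $\bf u=(\bar u_1,\ldots,\bar u_q)\in\cN^{\bf\phi}$. Running $\bf u$ through the estimates of $(i)$, the equality $c^{\bf\phi}=\cJ^{\bf\phi}(\bf u)=\sum_h c^{\bf\phi}_h$ forces every inequality there to be an equality; in particular $\sum_h A_h(1-s_h^2)=0$ with each term nonnegative, so $s_h=1$ and hence $A_h=B_h$, i.e.\ $C_h=0$, for every $h$. Since $\beta_{ij}<0$ on $\cK_h$, this gives $\irn|u_i|^p|u_j|^p=0$ whenever $i,j$ lie in different blocks. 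By elliptic regularity (the nonlinearity being subcritical, $2p<\tfrac{2N}{N-2}$) every $u_i$ is bounded and continuous, so the open sets $\{u_i\neq 0\}$ and $\{u_j\neq 0\}$ are disjoint for $i,j$ in different blocks; consequently the mixed nonlinearities in the equation for $u_i$ vanish identically and $u_i$ solves $-\Delta u_i+u_i=\sum_{j\in I_h}\beta_{ij}|u_j|^p|u_i|^{p-2}u_i$ on all of $\rn$ (recall that a minimizer of $\cJ^{\bf\phi}$ on $\cN^{\bf\phi}$ is a critical point of $\cJ^{\bf\phi}$, hence a solution of $(\mathscr S^{\bf\phi})$). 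Because $\bf u\in\cN^{\bf\phi}$ makes each block nontrivial and $q\ge 2$, there are $i\in I_h$ and $j\in I_k$ with $h\neq k$ and $u_i\not\equiv 0$, $u_j\not\equiv 0$; but then $u_i$ vanishes on the nonempty open set $\{u_j\neq 0\}$, so the unique continuation principle for $-\Delta+V$ with $V:=1-\sum_{j\in I_h}\beta_{ij}|u_j|^p|u_i|^{p-2}\in L^\infty_{loc}(\rn)$ forces $u_i\equiv 0$ on the connected space $\rn$, a contradiction. Hence $c^{\bf\phi}$ is not attained.

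The routine parts are $(i)$ and $(ii)$, the latter being essentially a re-run of Lemma \ref{lem:energy_estimates2}. The step I expect to require the most care is the last one in $(iii)$: upgrading ``the mixed integrals vanish'' to ``a whole component vanishes identically'', which needs the $L^\infty$/continuity regularity to replace a.e.\ disjoint supports by disjoint \emph{open} sets, and the unique continuation principle to propagate the zero set over all of $\rn$.
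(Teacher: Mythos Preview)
Your argument is correct and tracks the paper's proof closely. Parts $(i)$ and $(ii)$ are essentially identical to the paper's: rescale each block onto $\cN^{\bf\phi}_h$ using Lemma~\ref{lem:N} for $(i)$, and pick $q$ points in $\F(G)$ and iterate Lemma~\ref{lem:energy_estimates2} for $(ii)$.

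In $(iii)$ you streamline the paper's argument. The paper splits into two cases: either some cross-block integral is nonzero for every $h$ (giving a strict inequality and a contradiction), or all cross-block integrals vanish for \emph{some} $h$ (leading to unique continuation). You instead observe that equality throughout the chain in $(i)$ forces $s_h=1$ and hence $C_h=0$ for \emph{every} $h$, so you land directly in the paper's second case for all blocks at once and skip the dichotomy. Both routes end with the same unique continuation step.

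One technical slip: since $1<p<2$, the factor $|u_i|^{p-2}$ is singular on the zero set of $u_i$, so your potential $V=1-\sum_{j\in I_h}\beta_{ij}|u_j|^p|u_i|^{p-2}$ is \emph{not} in $L^\infty_{\mathrm{loc}}(\rn)$ as you assert. The paper simply writes ``contradicting the unique continuation principle'' without specifying a form, so this is not a divergence from the paper but a point where both arguments are equally informal; if you want to be precise you should either cite a unique continuation result that tolerates such potentials or argue via the differential inequality $|\Delta u_i|\le |u_i|+C|u_i|^{p-1}$ with $u_i$ vanishing on an open set.
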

	
	\begin{proof}
$(i):$ \ Let $\bf u=(\bar u_1,\ldots,\bar u_q)\in\cN^{\bf\phi}$. As $\beta_{ij}<0$ for $(i,j)\in\cK_h$, we have 
			\[0<\|\bar u_h\|^2 \leq \dsum_{(i,j)\in \cI_h}\io\beta_{ij}|u_i|^p|u_j|^p \qquad\text{for every \ }h=1,\ldots,q.\]
			Hence, there exist $(s_1,\ldots,s_q)\in(0,\infty)^q$ such that $s_h\bar u_h\in\cN^{\bf\phi}_{h}$. Setting $\bf s:=(s_1,\ldots,s_q)$ and applying Lemma \ref{lem:N} we obtain
			\[\dsum_{h=1}^q c^{\bf\phi}_{h}\leq \dsum_{h=1}^q \cJ^{\bf\phi}_{h}(s_h\bar u_h)\leq\cJ^{\bf\phi}(\bf s\bf u) \leq \cJ^{\bf\phi}(\bf u).\]
			Hence, $c^{\bf\phi}\geq c^{\bf\phi}_{1}+\cdots+c^{\bf\phi}_{q}$, as claimed.
			
$(ii):$ \ As $\F(G)\neq 0$ we may choose $q$ different points $\xi_1,\ldots,\xi_q\in \F(G)$. Setting $Q_h:=\{h\}$ and iterating Lemma \ref{lem:energy_estimates2} we obtain
			\[
			c^{\bf\phi}\leq \dsum_{h=1}^q c_{h}^{\bf\phi}.
			\]
The equality follows from $(i)$.				
			
$(iii):$ \ We argue by contradiction. Assume that $\bf u=(\bar u_1,\ldots,\bar u_q)\in\cN^{\bf\phi}$ and $\cJ^{\bf\phi}(\bf u)=c^{\bf\phi}$. There are two possibilities. If for every $h$ there exists $k\neq h$ such that
			\[
			\dsum_{(i,j)\in I_h\times I_k}\irn\beta_{ij}| u_i|^{p}|u_j|^{p}\neq 0,
			\]
then $\|\bar u_h\|^2<\dsum_{(i,j)\in \cI_h}\irn\beta_{ij}|u_i|^{p}|u_j|^p$ and, hence, there exists $s_h\in(0,1)$ such that $s_h\bar u_h\in\cN^{\bf\phi}_{h}$ and 
\[
c^{\bf\phi}_{h} \leq \cJ^{\bf\phi}_{h}(s_h\bar u_h)=\dfrac{p-1}{2p}\|s_h\bar u_h\|^2<\dfrac{p-1}{2p}\|\bar u_h\|^2\qquad\text{for every \ } h=1,\ldots,q.
\]
It follows that 
			\[c^{\bf\phi}=\cJ^{\bf\phi}(\bf u)=\dfrac{p-1}{2p}\sum_{h=1}^q\|\bar u_h\|^2>\displaystyle\dsum_{h=1}^q c^{\bf\phi}_{h},\]
contradicting our assumption. On the other hand, if there exists $h$ such that 
			\begin{equation}\label{eq:null}
\dsum_{(s,r)\in I_h\times I_k}\irn \beta_{sr}|u_s|^{p}|u_r|^{p}= 0 \qquad \text{for every \ }k\neq h,
			\end{equation}		
then 
\[\|\bar u_h\|^2=\dsum_{(i,j)\in \cI_h}\irn\beta_{ij}|u_i|^p|u_j|^p\qquad\text{and}\qquad \cJ^{\bf\phi}_{h}(\bar u_h)=c^{\bf\phi}_{h}.\]
Hence, $\bar u_h$ is a solution to the system $(\mathscr S_h^{\bf\phi})$ with $\|\bar u_h\|\neq 0$. So $u_i\neq 0$ for some $i\in I_h$ and it solves the equation
\[-\Delta u_i+ u_i = \dsum_{j\in I_h}\beta_{ij}|u_j|^p|u_i|^{p-2}u_i,\qquad u_i\in H^1(\rn).\]
As $q\geq 2$ there exists $k\neq h$ and $j\in I_k$ such that $u_j\neq 0$, because $\|\bar u_k\|\neq 0$. Since all $\beta_{sr}$ in the sum \eqref{eq:null} are strictly negative, it follows that $|u_i||u_j|= 0$ a.e. in $\rn$. Hence $u_i=0$ in some subset of positive measure of $\rn$, contradicting the unique continuation principle.
\end{proof}	
	
	The following lemma will be used in the proof of Theorem \ref{thm:splitting}.
	
	\begin{lemma}\label{lem:orbits2} 
		For any given sequence $(y_n)$ in $\rn$, there exist a sequence $(\zeta_n)$ in $\rn$ and a closed subgroup $K$ of $G$ such that, up to a subsequence, the following statements hold true:
		
		\begin{itemize}
			\item[$(a)$] $\mathrm{dist}(Gy_n,\zeta_n)\leq C$ for all $n\in\n$ and some positive constant $C$.
			\item[$(b)$] $G_{\zeta_n} = K$ for all $n\in\n$.
			\item[$(c)$] If $|G/K| <\infty$ then $\lim\limits_{n\to\infty}|g\zeta_n-\bar g\zeta_n|=\infty$ for any pair $g,\bar g\in G$ with $\bar g g^{-1}\notin K$.
			\item[$(d)$] If $|G/K|=\infty$ then, for each $n\in\n$, there exists $g_n\in G$ such that $\lim\limits_{k\to\infty}|g_{n_1}\zeta_k-g_{n_2}\zeta_k|=\infty$ for any pair $n_1,n_2\in\n$ with $n_1\neq n_2$.
		\end{itemize}
	\end{lemma}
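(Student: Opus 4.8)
The plan is to prove the lemma by a maximality argument over suitable pairs. Call a pair $(L,(\zeta_n))$ \emph{admissible} if $L$ is a closed subgroup of $G$ and $(\zeta_n)$ is a sequence in $\rn$ such that, after passing to a subsequence, $G_{\zeta_n}=L$ for every $n$ and $\sup_n\mathrm{dist}(Gy_n,\zeta_n)<\infty$; thus $(a)$ together with $(b)$ is exactly the assertion that $(K,(\zeta_n))$ is admissible. If $(|y_n|)$ has a bounded subsequence, one passes to it and takes $\zeta_n:=0$ and $K:=G$: then $(a)$ holds because $\mathrm{dist}(Gy_n,0)=|y_n|$ is bounded (all points of $Gy_n$ have norm $|y_n|$, since $G\subset O(N)$), $(b)$ holds because $G_0=G$, and $(c)$, $(d)$ hold vacuously. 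So one may assume $|y_n|\to\infty$.

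Since $G$ is compact, its action on $\rn$ has only finitely many orbit types, i.e., the isotropy subgroups $G_x$, $x\in\rn$, fall into finitely many conjugacy classes. Hence, passing to a subsequence and replacing each $y_n$ by a suitable point of $Gy_n$ (which changes neither $Gy_n$ nor $|y_n|$), one may assume $G_{y_n}=H$ for a fixed closed subgroup $H$, so $(H,(y_n))$ is admissible. Since every subgroup occurring in an admissible pair is an isotropy subgroup of the action, hence lies in one of these finitely many conjugacy classes, one may fix an admissible pair $(K,(\zeta_n))$ with $K$ maximal (such a $K$ exists because there are only finitely many conjugacy classes of isotropy subgroups); then $(a)$ and $(b)$ hold. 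The crucial observation is: if $(\eta_n)$ is a sequence with $\sup_n|\zeta_n-\eta_n|<\infty$ and $G_{\eta_n}\supseteq L'$ for some closed subgroup $L'\supsetneq K$, then, after passing to a subsequence and replacing each $\eta_n$ by a suitable point of $G\eta_n$ so that $G_{\eta_n}$ becomes constant---operations that preserve the bound $\sup_n\mathrm{dist}(Gy_n,\cdot)<\infty$---one gets an admissible pair whose subgroup strictly contains a conjugate of $K$, contradicting the maximality of $K$. So no such $(\eta_n)$ can exist.

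To deduce $(c)$, suppose $|G/K|<\infty$. Since $g\zeta_n$ depends only on the coset $gK$, one may pass to a subsequence along which $\lim_n|g\zeta_n-\bar g\zeta_n|$ exists in $[0,\infty]$ for each of the finitely many pairs of cosets. Then $K^+:=\{g\in G:\lim_n|g\zeta_n-\zeta_n|<\infty\}$ is a subgroup with $K\subseteq K^+$ and $|K^+/K|\le|G/K|<\infty$; if $K\subsetneq K^+$, the average $\eta_n:=|K^+/K|^{-1}\sum_{gK\in K^+/K}g\zeta_n$ is $K^+$-invariant and satisfies $\sup_n|\zeta_n-\eta_n|<\infty$ by the definition of $K^+$, contradicting the observation. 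Hence $K=K^+$, which means $|g\zeta_n-\bar g\zeta_n|\to\infty$ whenever $g$ and $\bar g$ lie in distinct cosets of $K$; this is $(c)$. To deduce $(d)$, suppose $|G/K|=\infty$; I would show that $|\zeta_n-\gamma\zeta_n|\to\infty$ for each fixed $\gamma\in G\smallsetminus K$. Granting this and choosing representatives $g_n$ of pairwise distinct cosets of $K$, one obtains $(d)$ because $|g_{n_1}\zeta_k-g_{n_2}\zeta_k|=|\zeta_k-\gamma\zeta_k|$ with $\gamma:=g_{n_1}^{-1}g_{n_2}\notin K$. For the claim, if $\liminf_n|\zeta_n-\gamma\zeta_n|<\infty$ for some such $\gamma$, then along a subsequence realizing this liminf the group $\langle K,\gamma\rangle$ (finite when $G$ is finite; see below) displaces $\zeta_n$ only boundedly, so averaging $\zeta_n$ over the cosets of $K$ in $\langle K,\gamma\rangle$ again contradicts the observation.

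I expect the main obstacle to be the tension between $(a)$ and $(b)$--$(d)$: condition $(a)$ forbids symmetrizing $\zeta_n$ too aggressively, whereas $(b)$--$(d)$ ask for $\zeta_n$ to be symmetric enough that the points of its $G$-orbit fly apart. Maximality reconciles these, and the crux is the elementary but essential point that averaging $\zeta_n$ over the cosets of a ``cluster subgroup''---one whose elements move $\zeta_n$ only a bounded distance, by construction---keeps $\zeta_n$ within bounded distance of $Gy_n$. When $G$ is not finite one must, in addition, arrange that the subgroup $\langle K,\gamma\rangle$ entering the argument for $(d)$ can be taken compact with finite index over $K$, for which the finiteness of the number of orbit types of $G$ is again the key input; carefully juggling distances, isotropy, and separation simultaneously is where I expect most of the work to go.
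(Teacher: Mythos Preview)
The paper does not prove this lemma; it merely cites \cite[Lemma~3.2]{CC}. Your maximality-and-averaging strategy is therefore new content relative to the paper, and for $(a)$, $(b)$, $(c)$ it is correct: the selection of an admissible pair with isotropy in a maximal conjugacy class, the ``crucial observation'' that a bounded-distance sequence with strictly larger isotropy would contradict that maximality, and the averaging over the finite quotient $K^+/K$ all go through as written.

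The gap you flag in $(d)$ is real, and the patch you sketch does not close it. Averaging over the cosets of $K$ in $\langle K,\gamma\rangle$ requires $|\langle K,\gamma\rangle/K|<\infty$ and that each coset displace $\zeta_n$ boundedly; both can fail (take $G=SO(2)$ acting on $\r^2$, $K=\{1\}$, $\gamma$ an irrational rotation), and ``finiteness of orbit types'' does not manufacture a better $\gamma$. The correct tool here is orthogonal projection rather than averaging. If $|\zeta_n-\gamma\zeta_n|\le C$ along a subsequence, let $\eta_n$ be the orthogonal projection of $\zeta_n$ onto the linear subspace $W:=\F(K)\cap\F(\gamma)=\F\big(\overline{\langle K,\gamma\rangle}\big)$. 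Then $G_{\eta_n}\supseteq\overline{\langle K,\gamma\rangle}\supsetneq K$. Since $I-\gamma$ is invertible on $\F(\gamma)^\perp$, one has $\mathrm{dist}(\zeta_n,\F(\gamma))\le c_\gamma^{-1}|\zeta_n-\gamma\zeta_n|\le c_\gamma^{-1}C$; and because $\zeta_n\in\F(K)$, the elementary linear-algebra fact that for subspaces $V,W'$ and $x\in V$ one has $\mathrm{dist}(x,V\cap W')\le c_{V,W'}\,\mathrm{dist}(x,W')$ (which holds since $(V\cap W')^{\perp_V}\cap W'=\{0\}$) bounds $|\zeta_n-\eta_n|$ uniformly. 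Your ``crucial observation'' then yields the contradiction with maximality. This establishes the stronger claim $|\zeta_n-\gamma\zeta_n|\to\infty$ for every fixed $\gamma\in G\smallsetminus K$, and hence $(d)$, with no finiteness hypothesis on $\langle K,\gamma\rangle$.
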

	
	\begin{proof}
		This follows from \cite[Lemma 3.2]{CC}.
	\end{proof}
	We are finally ready to prove the main result of this section.
	
\begin{theorem} \label{thm:splitting} 
Let $\bf u_n=(u_{1n},\ldots,u_{\ell n})\in\cN^{\bf\phi}$ be such that $\cJ^{\bf\phi}(\bf u_n)\to c^{\bf\phi}$. Then, after passing to a subsequence, there exist $Q\subset\{1,\ldots,q\}$, a least energy block-wise nontrivial solution $\bf w$ to the subsystem
\begin{equation} \label{eq:subsystem}
\tag{$\mathscr S_{Q}^{\bf\phi}$}\qquad
\begin{cases}
-\Delta w_i+ w_i = \dsum_{j\in I_Q} \beta_{ij}|w_j|^p|w_i|^{p-2}w_i, \\
w_i\in H^1(\rn)^{\phi_i},\qquad i\in I_Q,
\end{cases}
\end{equation} 
whenever $Q\neq\emptyset$, and, for each $k\notin Q$, a sequence $(\xi_{kn})$ in $\rn$, a closed subgroup $G_k$ of $G$, and a least energy nontrivial solution $\bar v_k$ to the cooperative subsystem
\begin{equation} \label{eq:cooperative}
\tag{$\mathscr S_{k}^{\bf\phi|G_k}$}\qquad
\begin{cases}
-\Delta v_i+ v_i = \dsum_{j\in I_k} \beta_{ij}|v_j|^p|v_i|^{p-2}v_i, \\
v_i\in H^1(\rn)^{\phi_i|G_k},\qquad i\in I_k,
\end{cases}
\end{equation}
with the following properties:
\begin{itemize}
\item[$(i)$] If $i\in I_Q$, then $\dlim_{n \to \infty}\|u_{in}-w_i\|=0$.
\item[$(ii)$] If $k\notin Q$, then $G_{\xi_{kn}}=G_k$ for all $n\in\n$, $|G/G_k|<\infty$, $\dlim\limits_{n\to\infty}|g\xi_{kn}-\bar g\xi_{kn}|=\infty$ for any $g,\bar g\in G$ with $\bar g g^{-1}\notin G_k$, and $\dlim_{n \to \infty}\mathrm{dist}(G\xi_{kn},G\xi_{k'n})=\infty$ whenever $k'\neq k$, $k'\notin Q$.
\item[$(iii)$] If $Q\neq\emptyset$ and $k\notin Q$, then $\dlim_{n\to \infty}|\xi_{kn}|=\infty$.
\item[$(iv)$] If $i\in I_k$ and $k\notin Q$, then
\[\dlim_{n\to\infty}\left\|u_{in}-\dsum_{[g]\in G/G_k}\phi_i(g)(v_i\circ g^{-1})( \ \cdot \ -g\xi_{kn})\right\|=0.\]
\item[$(v)$] $c^{\bf\phi} = c_{Q}^{\bf\phi} + \dsum_{k\notin Q}|G/G_k|\,c^{\bf\phi|G_k}_{k}$, \ where $c_{Q}^{\bf\phi}:=0$ if $Q=\emptyset$.
\end{itemize}
Moreover, if $u_{in}\geq 0$ for all $n\in\n$, then $w_i\geq 0$ if $i\in I_Q$ and $v_i\geq 0$ if $i\notin I_Q$.
	\end{theorem}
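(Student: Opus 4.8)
The plan is to run a concentration-compactness / profile-decomposition argument on the minimizing sequence $\bf u_n$, carefully tracking the $G$-symmetry and the $\phi_i$-equivariance at each step. First I would observe that $\cJ^{\bf\phi}(\bf u_n)\to c^{\bf\phi}$ together with $\bf u_n\in\cN^{\bf\phi}$ forces $\|\bf u_n\|$ to be bounded (since $\cJ^{\bf\phi}(\bf u_n)=\frac{p-1}{2p}\|\bf u_n\|^2$ on the Nehari set) and bounded away from $0$ on each block (Lemma \ref{lem:N}$(i)$). Passing to a subsequence, $\bf u_n\rh\bf w=(w_1,\dots,w_\ell)$ weakly in $(H^1(\rn))^\ell$, and each $w_i\in H^1(\rn)^{\phi_i}$ since the subspaces are weakly closed. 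The index set $Q$ should be defined as $\{h\in\{1,\dots,q\}:\bar w_h\neq 0\}$; I would then show that on $I_Q$ the weak limit is actually strong and that $\bf w$ restricted to $I_Q$ solves and minimizes on $\cN_Q^{\bf\phi}$, while on the complementary blocks the mass escapes to infinity along $G$-orbits, producing the translated profiles $\bar v_k$.

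The heart of the argument is an iterated bubbling step for the blocks $h\notin Q$. For the ``defect'' $\bf u_n-\bf w$ I would apply a vanishing/dichotomy alternative: if some block does not go to zero in $L^{2p}_{loc}$, a Lions-type lemma yields a sequence $(y_n)$ in $\rn$ with $|y_n|\to\infty$ along which a nontrivial profile survives; feeding $(y_n)$ into Lemma \ref{lem:orbits2} replaces it by $(\zeta_n)$ with constant isotropy $K=:G_k$, finite index $|G/G_k|$ (the infinite-index alternative $(d)$ must be excluded, which is where the energy accounting below bites), and divergence of distinct translates $|g\zeta_n-\bar g\zeta_n|\to\infty$. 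Translating by the finitely many points $g\zeta_n$, $[g]\in G/G_k$, and using the $\phi_i$-equivariance to fix the signs $\phi_i(g)$, one extracts a weak limit $\bar v_k$ which is a nontrivial solution of the cooperative system $(\mathscr S_k^{\bf\phi|G_k})$ — cooperative because the negative cross-block terms $\beta_{ij}$, $(i,j)\in\cK_k$, are killed in the limit once the $k$-block mass has separated spatially from every other block. An energy-splitting identity — Brezis–Lieb applied to the $L^{2p}$ nonlinearities, plus orthogonality of the asymptotically disjointly supported translates — gives at each stage a drop of exactly $|G/G_k|\,\cJ^{\bf\phi|G_k}_k(\bar v_k)\geq|G/G_k|\,c_k^{\bf\phi|G_k}$ in the energy of the remaining defect. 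Since each such term is bounded below by a fixed positive constant times $|G/G_k|$ and the total is $\leq c^{\bf\phi}<\infty$, the iteration terminates after finitely many steps; this is also what rules out $|G/G_k|=\infty$. Combining the lower bound $c^{\bf\phi}\geq c_Q^{\bf\phi}+\sum_{k\notin Q}|G/G_k|\,c_k^{\bf\phi|G_k}$ from the splitting with the matching upper bound from iterating Lemma \ref{lem:energy_estimates2} (choosing the points $\xi_{kn}$ so that distinct $G$-orbits are mutually far apart and far from the origin, which also forces $|\xi_{kn}|\to\infty$ whenever $Q\neq\emptyset$) pins down each profile to be a \emph{least energy} solution and upgrades all convergences to strong — giving $(i)$, $(iv)$ and $(v)$, with $(ii)$–$(iii)$ coming directly from Lemma \ref{lem:orbits2} and the far-apart choice. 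Finally, if $u_{in}\geq 0$ for all $n$, then both the weak limit $w_i$ and each translated weak limit $v_i$ inherit $\geq 0$, since nonnegativity is preserved under weak $H^1$ convergence and under the translations $(\cdot)-g\xi_{kn}$ (and for $\phi_i\equiv 1$ the coefficients $\phi_i(g)=1$ do not spoil the sign).

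The main obstacle I expect is the bookkeeping that makes the splitting \emph{exact} rather than merely an inequality: one must verify that the only way mass can leak is along $G$-orbits with \emph{finite} isotropy index (excluding alternative $(d)$ of Lemma \ref{lem:orbits2}), that the surviving profile in each escaping block is genuinely a solution of the \emph{cooperative} truncated system — i.e. that every pair of distinct blocks either both stay (landing in $Q$) or separate spatially so that the competitive couplings vanish in the limit and no two escaping blocks can ride on the same sequence of centers — and that the Brezis–Lieb / orthogonality estimates leave no cross terms. The matching upper bound via Lemma \ref{lem:energy_estimates2}, together with strict positivity of each $c_k^{\bf\phi|G_k}$ from Lemma \ref{lem:N}$(i)$, is what closes the loop and simultaneously terminates the iteration, forces every profile to have \emph{least} energy, and promotes weak convergence to strong.
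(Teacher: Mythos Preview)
Your plan captures the overall architecture, but there is a genuine gap in how you resolve the obstacle you yourself flag: that ``no two escaping blocks can ride on the same sequence of centers''. You claim the matching of the Lemma~\ref{lem:energy_estimates2} upper bound with the Brezis--Lieb lower bound closes this, but it does not. Suppose two blocks $h,k\notin Q$ concentrate at the same sequence $(\xi_n)$ with isotropy $\Gamma=G_{\xi_n}$. The extracted profile $\bar v$ is then a block-wise nontrivial solution of the \emph{coupled} two-block system $(\mathscr S_{\{h,k\}}^{\bf\phi|\Gamma})$, and the energy drop is $|G/\Gamma|\cdot\cJ_{\{h,k\}}^{\bf\phi|\Gamma}(\bar v)\geq |G/\Gamma|\,c_{\{h,k\}}^{\bf\phi|\Gamma}$. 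Iterating Lemma~\ref{lem:energy_estimates2} yields the matching upper bound $c^{\bf\phi}\leq c_Q^{\bf\phi}+|G/\Gamma|\,c_{\{h,k\}}^{\bf\phi|\Gamma}+\cdots$ as well, so the energy accounting only forces $\bar v$ to be a \emph{least energy} solution of the two-block system---it does not force $h$ and $k$ to decouple. The missing ingredient is Proposition~\ref{prop:ground_states}$(iii)$: since the center is escaping one has $\xi_n\in\F(\Gamma)\smallsetminus\{0\}$, hence $\F(\Gamma)\neq\{0\}$; then Proposition~\ref{prop:ground_states}$(ii)$ gives $c_{\{h,k\}}^{\bf\phi|\Gamma}=c_h^{\bf\phi|\Gamma}+c_k^{\bf\phi|\Gamma}$, and Proposition~\ref{prop:ground_states}$(iii)$---whose proof rests on the unique continuation principle---says this infimum is \emph{not attained}. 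That contradiction is what rules out shared centers, and your plan never invokes it.

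There is also a structural difference worth noting. Rather than peeling off bubbles iteratively from a defect sequence (which after the first extraction no longer lies on $\cN^{\bf\phi}$), the paper locates in a single pass one concentration center $\zeta_{hn}$ per block via Lions' lemma and Lemma~\ref{lem:orbits2}, then partitions $\{1,\dots,q\}$ into clusters $Q_1,\dots,Q_t$ according to whether the $G$-orbits of the centers stay mutually close or drift apart, and carries out the energy accounting once. The set $Q$ in the statement is the (at most one) cluster whose center is the origin, and Proposition~\ref{prop:ground_states}$(iii)$ is precisely what forces every other cluster to be a singleton. Two smaller points: you should first upgrade $(\bf u_n)$ to a Palais--Smale sequence via Ekeland's principle (otherwise weak limits need not be critical points), and your definition $Q:=\{h:\bar w_h\neq 0\}$ coincides with the paper's only \emph{after} the energy accounting shows that no block outside the origin cluster can leave a nonzero weak limit at~$0$.
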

	
	\begin{proof}
		We split the proof into three steps.
		\medskip
		
		\textsc{Step 1.} \ \emph{The choice of the concentration points.}
		
Since $\bf u_n\in\cN^{\bf\phi}$ and $\beta_{ij}<0$ for $(i,j)\in \cK_h$ by assumption $(B_1)$, Lemma \ref{lem:N} yields a constant $d_0>0$ such that
		\begin{equation*}
			\dsum_{(i,j)\in \cI_h}\irn\beta_{ij}|u_{in}|^p|u_{jn}|^p>d_0\qquad\text{for all \ }n\in\n \text{ \ and \ } h=1,\ldots,q.
		\end{equation*}
Hence, there exist $i \in I_h$ and $c_0>0$ such that
\[\irn\beta_{ii}|u_{in}|^{2p}>c_0\qquad\text{for all \ }n\in\n.\]
Invoking \cite[Lemma 2.4]{cp} and Ekeland's variational principle \cite[Theorem 8.5]{W} we may assume without loss of generality that $(\cJ^{\bf\phi})'(\bf u_n)\to 0$ in $(\cH^{\bf\phi})'$. By Lions' lemma \cite[Lemma 1.21]{W} there exists $\theta>0$ such that, after passing to a subsequence,
		\begin{equation*}
\sup_{y \in \rn}\dsum_{(i,j)\in \cI_h}\int_{B_1(y)}\beta_{ij}|u_{in}|^p|u_{jn}|^p\geq\sup_{y \in \rn}\int_{B_1(y)}\beta_{ii}|u_{in}|^{2p}>2\theta,
		\end{equation*}
and we may choose $y_{hn}\in\rn$ satisfying
		\begin{equation*}
\dsum_{(i,j)\in \cI_h}	\int_{B_1(y_{hn})}\beta_{ij}|u_{in}|^p|u_{jn}|^p> \theta \qquad\text{for all \ }n\in\n \text{ \ and \ } h=1,\ldots,q.
		\end{equation*}
Applying Lemma \ref{lem:orbits2} to $(y_{hn})$ we obtain a sequence $(\zeta_{hn})$, a closed subgroup $K_h = G_{\zeta_{hn}}$ and a constant $C>0$ such that $\mathrm{dist}(G y_{hn},\zeta_{hn})\leq C$ for all $n\in \n$ and $h=1,\ldots,q$. Since $|u_{in}|$ is $G$-invariant, taking $g_{hn}\in G$ such that \ $|g_{hn}y_{hn}-\zeta_{hn}|=\mathrm{dist}(Gy_{hn},\zeta_{hn})$ we get
		\begin{equation} \label{eq:nonnull_2}
			\dsum_{(i,j)\in \cI_h}	\int_{B_{C+1}(\zeta_{hn})}\beta_{ij}|u_{in}|^p|u_{jn}|^p\geq  \dsum_{(i,j)\in \cI_h}	\int_{B_1(g_{hn}y_{hn})}\beta_{ij}|u_{in}|^p|u_{jn}|^p> \theta,
		\end{equation}
for all $n\in\n$ and $h=1,\ldots,q$. \ We claim that $|G/K_h|<\infty$. Otherwise, by Lemma \ref{lem:orbits2}$(d)$, for each $M\in\n$ we would be able to choose $g_{h1},\ldots,g_{hM}\in G$ such that
		\[|g_{hm} \zeta_{hn} - g_{hm'} \zeta_{hn}| \geq 2(C+1) \quad\text{for all pairs \ }m\neq m', \ \ m,m' = 1,\ldots,M,\]
and large $n \in \n$. As a consequence, $B_{C+1}(g_{hm}\zeta_{hn})\cap B_{C+1}(g_{hm'}\zeta_{hn})=\emptyset$. Thus,
		\begin{align*}
			\dsum_{(i,j)\in \cI_h}\irn\beta_{ij}|u_{in}|^p|u_{jn}|^p&\geq\dsum_{m=1}^M \dsum_{(i,j)\in \cI_h}	\int_{B_{C+1}(g_{h\bar m}\zeta_{hn})}\beta_{ij}|u_{in}|^p|u_{jn}|^p\\
			&=M \dsum_{(i,j)\in \cI_h}	\int_{B_{C+1}(\zeta_{hn})}\beta_{ij}|u_{in}|^p|u_{jn}|^p> M\theta,
		\end{align*}
which is a contradiction, because $M$ is arbitrary and, as $(\bf u_n)$ is bounded in $\cH^{\bf\phi}$, $(u_{in})$ is bounded in $L^{2p}(\rn)$. This shows that $|G/K_h|<\infty$. Therefore, by Lemma \ref{lem:orbits2}$(c)$,
		\begin{equation} \label{eq:orbits}
			\lim_{n\to\infty}|g\zeta_{hn}-\bar g\zeta_{hn}|=\infty\quad\text{for all \ }g,\bar g\in G\text{ \ with \ }\bar g g^{-1}\notin K_h,
		\end{equation}
and every $h=1,\ldots,q$.		
		
Let $Q_1,\ldots,Q_t$ be nonempty and pairwise disjoint subsets of $\{1,\ldots,q\}$ such that $Q_1\cup\cdots\cup Q_t=\{1,\ldots,q\}$ and, after passing to a subsequence,
		\begin{align*}
			&(\mathrm{dist}(G\zeta_{hn},G\zeta_{kn}))\text{ is bounded} &&\text{if \ }h,k\in Q_r\text{ for some } r, \\
			&\mathrm{dist}(G\zeta_{hn},G\zeta_{kn})\to\infty \ \ \text{as} \ \ n\to \infty &&\text{if \ }h\in Q_r\text{ and }k\in Q_{r'}\text{ with }r\neq r'.
		\end{align*}
Choose $h_r\in Q_r$ and set $\xi_{rn}:=\zeta_{h_rn}$. Fix $R>\mathrm{dist}(\xi_{rn},G\zeta_{hn})+C+1$ for every $n\in\n$ and $h\in Q_r$. Then
		\begin{align*}
\mathrm{dist}(G\xi_{rn},G\xi_{r'n})\to\infty \text{ \ as \ }n\to\infty \qquad\text{if \ }r\neq r',
		\end{align*}
and from \eqref{eq:nonnull_2} we obtain
		\begin{equation} \label{eq:nonull2_2}
			\sum_{(i,j)\in \cI_h}\int_{B_R(\xi_{rn})}\beta_{ij}|u_{in}|^p|u_{jn}|^p > \theta>0\quad\text{for all \ }n\in\n, \quad h\in Q_r.
		\end{equation}
Finally, set
		\[
		\Gamma_r:=G_{\xi_{rn}}=K_{h_{r}}\qquad r=1,\ldots,t.
		\]
		Note that if $\F(\Gamma_r) = \{0\}$, then $\xi_{rn} = 0$ for all $n\in \n$ and, hence, $\F(\Gamma_{r'})\neq \{0\}$ for each $r' \neq r.$
		
\medskip		
		
		\textsc{Step 2.} \ \emph{Finding a block-wise nontrivial solution to a subsystem.}
		
		Define
		\[w_{in}(x):=u_{in}(x+\xi_{rn})\quad\text{if \ }i\in I_{Q_r}.\]
Observe that  $w_{in} \in H^1(\rn)^{\phi_i|\Gamma_r}$ if $i\in I_{Q_r}$. Since the sequence $(w_{in})$ is bounded in $H^1(\rn)$, passing to a subsequence, we have that $w_{in}\rh w_i$ weakly in $H^1(\rn)^{\phi_i|\Gamma_r}$, $w_{in}\to w_i$ in $L^{2p}_\mathrm{loc}(\rn)$ and $w_{in}\to w_i$ a.e. in $\rn$. Hence, $w_{in}\geq 0$ if $u_{in}\geq 0$ for all $n\in\n$. As, by \eqref{eq:nonull2_2},
		\begin{equation*}
\dsum_{(i,j)\in \cI_h}	\int_{B_R(0)}\beta_{ij}|w_{in}|^p|w_{jn}|^p=\dsum_{(i,j)\in \cI_h}\int_{B_R(\xi_{rn})}\beta_{ij}|u_{in}|^p|u_{jn}|^p > \theta>0\quad\text{for all \ }n\in\n, \quad h\in Q_r,
		\end{equation*}
we deduce that $w_i\neq 0$ for some $i\in I_h$. So the vector $\overline w_h=(w_{\ell_{h-1}+1},\ldots,w_{\ell_h})$ is nonzero. 

Fix $i\in I_{Q_r}$ and let $g_1,\ldots,g_m\in G$ represent all the different cosets in $G/\Gamma_r$. By  \eqref{eq:orbits}, $|g_j\xi_{rn}-g_s\xi_{rn}|\to\infty$ if $j\neq s$. Therefore, 
\[\phi_i(g_j)\,(w_{in}\circ g_j^{-1})-\sum_{s=j+1}^{m}\phi_i(g_s)\,(w_{i}\circ g_s^{-1})(\,\cdot\,-g_s\xi_{rn}+g_j\xi_{rn})\rh\phi_i(g_j)\,(w_i\circ g_j)\]
weakly in $H^1(\rn)$. It follows that
\begin{align*}
&\Big\|\phi_i(g_j)\,(w_{in}\circ g_j^{-1})-\sum_{s=j+1}^{m}\phi_i(g_s)\,(w_{i}\circ g_s^{-1})(\,\cdot\,-g_s\xi_{rn}+g_j\xi_{rn})\Big\|^2\\
&=\Big\|\phi_i(g_j)\,(w_{in}\circ g_j^{-1})-\sum_{s=j}^{m}\phi_i(g_s)\,(w_{i}\circ g_s^{-1})(\,\cdot\,-g_s\xi_{rn}+g_j\xi_{rn})\Big\|^2+\Big\|\phi_i(g_j)\,(w_i\circ g_j)\Big\|^2+o_n(1).
\end{align*}
Since $u_{in}$ is $\phi_i$-equivariant, the change of variable $y=z-g_j\xi_{rn}$ yields
\begin{align*}
\Big\|u_{in}-\sum_{s=j+1}^{m}\phi_i(g_s)\,(w_{i}\circ g_s^{-1})(\,\cdot\,-g_s\xi_{rn})\Big\|^2=\Big\|u_{in}-\sum_{s=j}^{m}\phi_i(g_s)\,(w_{i}\circ g_s^{-1})(\,\cdot\,-g_s\xi_{rn})\Big\|^2+\|w_{i}\|^2+o_n(1),
\end{align*}
and iterating this identity we obtain
\begin{equation} \label{eq:split_components}
\|u_{in}\|^2-\Big\|u_{in}-\sum_{s=1}^{m}\phi_i(g_s)\,(w_{i}\circ g_s^{-1})(\,\cdot\,-g_s\xi_{rn})\Big\|^2+m\|w_{i}\|^2+o_n(1)\quad\text{for each \ }i\in I_{Q_r},
\end{equation}
with $m=|G/\Gamma_r|$.
		
Next, we fix $r\in\{1,\ldots,t\}$ and define
		\begin{equation*}
u^r_{in}(x):=u_{in}(x+\xi_{rn}).
		\end{equation*}
Then, $u^r_{in} \in H^1(\rn)^{\phi_i|\Gamma_r}$ and, after passing to a subsequence, $u^r_{in}\rh u^r_{i}$ weakly in $H^1(\rn)$. Note that $u^r_{i}=w_i$ if $i\in I_{Q_r}$.

For any $\vp\in\cC_c^\infty(\rn)^{\phi_i|\Gamma_r}$ define $\vp_{n}(y):=\vp(y-\xi_{rn})$ and set
\[\what\vp_n(y):=\sum_{[g]\in G/\Gamma_r}\phi_i(g)\vp_n(gy).\]
Then $\what\vp_n\in H^1(\rn)^{\phi_i}$. As $\bf u_n\in\cH^{\bf \phi}$ and $(\cJ^{\bf \phi})'(\bf u_n)\to 0$ in $(\cH^{\bf \phi})'$, we deduce that
\begin{align*}
0&=\lim_{n\to\infty}\partial_i\cJ^{\bf\phi}(\bf u_n)\what\vp_n=\lim_{n\to\infty}\sum_{[g]\in G/\Gamma_r}\partial_i\cJ^{\bf\phi}(\bf u_n)[\phi_i(g)(\vp_n\circ g)]\\
&=\lim_{n\to\infty}|G/\Gamma_r|\partial_i\cJ^{{\bf\phi}|\Gamma_r}(\bf u_n^r)\varphi=|G/\Gamma_r|\partial_i\cJ^{{\bf\phi}|\Gamma_r}(\bf u^r){\varphi}\quad\text{for all \ }i=1,\ldots,\ell,
\end{align*}
where $\bf u^r=(\overline u_1^r,\ldots,\overline u_q^r)$ with $\overline u_h^r=(u^r_{\ell_{h-1}+1},\ldots,u^r_{\ell_h})$.  So $\bar u_h^r=\bar w_h\neq 0$ if $h\in {Q_r}$. The above identity shows that $\bf u^r$ is a critical point of $\cJ^{\bf \phi}:\cH^{\bf \phi}\to \r$, so its nontrivial block-wise components $\overline u_h^r$ provide a nontrivial block-wise solution $\bf w^r$ to the system
\begin{equation} \label{eq:subsystem_2}
	\tag{$\mathscr S^{\bf\phi|\Gamma_{r}}_{\what Q_r}$}
	\begin{cases}
		-\Delta u_i+u_i=\dsum_{j\in I_{\what Q_r}}\beta_{ij}|u_j|^{p}|u_i|^{p-2}u_i,\\
		u_i \in H^1(\rn)^{\phi_i|\Gamma_r}, \qquad i\in I_{\what Q_r},
	\end{cases}
\end{equation}
where $\what Q_r :=\{h:1\leq h\leq q, \ \bar u_h^r\neq 0\}\supset Q_r$. 
		\medskip
		
		\textsc{Step 3.} \ \emph{The conclusion.}
		
		We distinguish two cases.
\begin{itemize}
\item[$(I):$] Assume $\F(\Gamma_r)\neq\{0\}$ for every $r=1,\ldots,t$. 

Since $\bf w^r$ is a block-wise nontrivial solution to $(\mathscr{S}^{\bf \phi|\Gamma_r}_{\what Q_r})$, multiplying the $i$-th equation by $w_i$ for $i\in I_h$ and $h\in Q_r$, and integrating we get
\begin{equation} \label{eq:t_h}
	\| w_i\|^2 = \sum_{j\in I_{\what Q_r}}\irn\beta_{ij}|w_i|^p|u^h_{j}|^p\leq \dsum_{j\in I_h}\irn\beta_{ij}|w_i|^p|w_j|^p.
\end{equation}
So there exists $t_h \in (0,1]$ such that $\|t_h\bar w_h\|^2 = \dsum_{i,j\in I_h}\irn \beta_{ij}|t_hw_i|^p|t_hw_j|^p$. Then, $t_h\bar w_h \in \cN_h^{\bf \phi|\Gamma_r}$ and
\begin{equation} \label{eq:barwh}
c^{\bf \phi|\Gamma_r}_{h}\leq\dfrac{p-1}{2p}\|t_h\bar w_h\|^2\leq \dfrac{p-1}{2p}\|\bar w_h\|^2.
\end{equation} 
Since $\F(\Gamma_r)\neq\{0\}$ for every $r$, Proposition \ref{prop:ground_states}$(ii)$ yields $c_{Q_r}^{\bf \phi| \Gamma_r} = \dsum_{h\in Q_r}c_h^{\bf \phi|\Gamma_r}$.  Applying Lemma \ref{lem:energy_estimates2} recursively and using \eqref{eq:split_components} and \eqref{eq:barwh} we obtain
\begin{align} \label{eq:totalenergy}
&\sum_{r=1}^{t}\sum_{h\in Q_r}|G/\Gamma_r|c_h^{\bf \phi|\Gamma_r}=\sum_{r=1}^{t}|G/\Gamma_r|\,c^{{\bf\phi}|\Gamma_r}_{Q_r}\geq c^{\bf\phi} =\lim_{n\to\infty}\dfrac{p-1}{2p}\sum_{r=1}^{t}\sum_{h\in Q_r}\|\bar u_{hn}\|^2\\
&\qquad\geq \sum_{r=1}^{t}\sum_{h\in Q_r}\dfrac{p-1}{2p}|G/\Gamma_r|\,\|\bar w_{h}\|^2\geq \sum_{r=1}^{t}\sum_{h\in Q_r}\dfrac{p-1}{2p}|G/\Gamma_r|\,\|t_h\bar w_{h}\|^2\geq\sum_{r=1}^{t}\sum_{h\in Q_r}|G/\Gamma_r|c_h^{\bf \phi|\Gamma_r}. \nonumber
\end{align}
Therefore, $t_h=1$ and from \eqref{eq:t_h} we get that
\[\irn\beta_{ij}|w_i|^p|u^h_{j}|^p=0\qquad\text{for every \ }j\in I_{\what Q_r}\smallsetminus I_h.\]
Choosing $i\in I_h$ such that $w_i\neq 0$, as $\bf w^r$ solves the system, we have that $w_i(x)\neq 0$ a.e. in $\rn$. As a consequence $u^h_{j}=0$ for every $j\in I_{\what Q_r}\smallsetminus I_h$. This shows that $\what Q_r=Q_r$, and from \eqref{eq:barwh} and \eqref{eq:totalenergy} we get
\[c_{Q_r}^{\bf \phi| \Gamma_r}=\sum_{h\in Q_r}c^{\bf \phi|\Gamma_r}_{h}=\dfrac{p-1}{2p}\sum_{h\in Q_r}\|\bar w_h\|^2=\dfrac{p-1}{2p}\|\bf w^r\|^2.\]

Therefore $c_{Q_r}^{\bf \phi| \Gamma_r}$ is attained. As $\F(\Gamma_r)\neq\{0\}$, Proposition \ref{prop:ground_states} implies that each $Q_r$ must consist of a single element. We relabel these sets as $Q_k=\{k\}$ and the corresponding points $\xi_{rn}$ as $\xi_{kn}$ for each $k\in\{1,\ldots,q\}$. From the way the sets $Q_r$ were chosen, we get $\mathrm{dist}(G\xi_{hn},G\xi_{kn})\to\infty$ if $h\neq k$. Setting $Q:=\emptyset$, $G_k:=\Gamma_{k}=G_{\xi_{kn}}$ and $\bar v_k:=\bar w_k$, we have that $\bar v_k$ is a least energy nontrivial solution to the cooperative system \eqref{eq:cooperative} and we derive from \eqref{eq:orbits} and \eqref{eq:totalenergy} that these data satisfy $(ii)$ and $(v)$. Finally, from \eqref{eq:split_components} and \eqref{eq:totalenergy} we obtain
\begin{equation*}
\lim_{n\to\infty}\Big\|u_{in}-\dsum_{g\in G/G_h}\phi_i(g)(w_i\circ g^{-1})( \ \cdot \ - g\xi_{hn})\Big\|^2=0\quad \text{if \ }i\in I_h,
\end{equation*}
as claimed in $(iv)$.

\item[$(II):$] Assume $\F(\Gamma_{r_0})=\{0\}$ for some $r_0\in\{1,\ldots,t\}$. To simplify notation, let $r_0=1$.

Then $\xi_{1n}=0$ for every $n\in\n$, $\Gamma_1=G$ and $\F(\Gamma_r)\neq \{0\}$ for all $r\neq 1$. Setting $\overline Q_r := Q_r \smallsetminus \what Q_1$, choosing $t_h\in(0,1]$ as before if $h\in\overline Q_r$, and arguing as in $(I)$ we derive
		\begin{align} \label{eq:totalenergy3}
&\sum_{r=2}^t\sum_{h\in \overline Q_r}|G/\Gamma_r|c_h^{\bf \phi|\Gamma_r} + c_{\what Q_1}^{\bf \phi}=
\sum_{r=2}^t|G/\Gamma_r|\,c^{{\bf\phi}|\Gamma_r}_{\overline Q_r} + c_{\what Q_{1}}^{\bf \phi}\geq c^{\bf\phi} \\
& =\dfrac{p-1}{2p}\Big(\lim_{n\to\infty}\sum_{r=2}^t\sum_{h\in \overline Q_r}\|\overline u_{hn}\|^2 + \sum_{h\in\what Q_1}\|\overline u_{hn}\|^2\Big)\geq \dfrac{p-1}{2p}\Big(\sum_{r=2}^t\sum_{h\in \overline Q_r}	|G/\Gamma_r|\,\|\bar w_{h}\|^2 + \sum_{h\in\what Q_1}\|\bar u_h^1\|^2 \Big)\nonumber\\
&\geq\dfrac{p-1}{2p}\Big(\sum_{r=2}^t\sum_{h\in \overline Q_r}|G/\Gamma_r|\,\|t_h\bar w_{h}\|^2 + \|\bf w^1\|^2 \Big)\geq \sum_{r=2}^t\sum_{h\in \overline Q_r}|G/\Gamma_r|c_h^{\bf \phi|\Gamma_r} + c_{\what Q_1}^{\bf \phi}.\nonumber
	\end{align}
It follows that $\bar u_{hn}\to\bar u^1_h\neq 0$ strongly in $H^1(\rn)$ for all $h\in \what Q_1$. Since  $|\xi_{rn}|\to\infty$ for every  $r\neq 1$ and the weak limit of $\bar u_{hn}(\,\cdot\,+\xi_{rn})$ is nonzero for every $h\in Q_r$, we must have  $\what Q_1=Q_1$. 
Set $Q:=Q_1$ and $\bf w:=\bf w^1$. As in case $(I)$ we see that $Q_r$ must consist of a single element if $r\neq 1$. We relabel these sets as $Q_k=\{k\}$ and set $\xi_{kn}:=\zeta_{kn}$, $G_k:=\Gamma_{k}=G_{\xi_{kn}}$ and $\bar v_k:=\bar w_k$ for each $k\notin Q$. These data have the properties stated in Theorem \ref{thm:splitting}.
\end{itemize}	
The proof is complete.
\end{proof}
	
\begin{corollary} \label{cor:main_inequality}
Under the assumptions of \emph{Theorem \ref{thm:splitting}}, the following statements hold true:
\begin{itemize}
\item[$(i)$] If $q\geq 2$ and $\F(G)\neq 0$ the system $(\mathscr S^{\bf\phi})$ does not have a least energy block-wise nontrivial solution.
\item[$(ii)$] If for every $h\in\widehat Q:=\{1,\ldots,q\}$ one has that
\begin{equation}  \label{eq:condition}
c^{\bf\phi}<c_{\widehat Q\smallsetminus\{h\}}^{\bf\phi} + |G\xi|\,c_h^{{\bf\phi}|G_\xi}\qquad\text{for every \ }\xi\in\rn\smallsetminus\{0\},
\end{equation}
then the system $(\mathscr S^{\bf\phi})$ has a least energy block-wise nontrivial solution.
\end{itemize}
\end{corollary}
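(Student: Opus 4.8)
The plan is to read both assertions off Proposition~\ref{prop:ground_states} and Theorem~\ref{thm:splitting}, which already carry all the analysis. \emph{Part $(i)$} is immediate: when $\F(G)\neq 0$, Proposition~\ref{prop:ground_states}$(ii)$ gives $c^{\bf\phi}=\sum_{h=1}^q c_h^{\bf\phi}$, and then Proposition~\ref{prop:ground_states}$(iii)$, using $q\geq 2$, says that $c^{\bf\phi}$ is not attained; since a least energy block-wise nontrivial solution of $(\mathscr S^{\bf\phi})$ is precisely a minimizer of $\cJ^{\bf\phi}$ on $\cN^{\bf\phi}$, no such solution exists.

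\emph{Part $(ii)$.} By Lemma~\ref{lem:N}$(i)$ we have $c^{\bf\phi}>0$, so we may fix a minimizing sequence $\bf u_n\in\cN^{\bf\phi}$, $\cJ^{\bf\phi}(\bf u_n)\to c^{\bf\phi}$, and apply Theorem~\ref{thm:splitting}. After passing to a subsequence we obtain a subset $Q\subset\{1,\ldots,q\}$, a least energy block-wise nontrivial solution $\bf w$ of $(\mathscr S_Q^{\bf\phi})$ when $Q\neq\emptyset$, and for each $k\notin Q$ a concentration sequence $(\xi_{kn})$ with $G_{\xi_{kn}}=G_k$ and $|G/G_k|<\infty$, together with a least energy nontrivial solution $\bar v_k$ of $(\mathscr S_k^{{\bf\phi}|G_k})$, satisfying the separation property $(ii)$, the escape property $(iii)$, and the energy identity
\[c^{\bf\phi}=c_Q^{\bf\phi}+\sum_{k\notin Q}|G/G_k|\,c_k^{{\bf\phi}|G_k}.\]
If $Q=\{1,\ldots,q\}$, then $(\mathscr S_Q^{\bf\phi})$ is $(\mathscr S^{\bf\phi})$ and $\bf w$ is the required solution; hence the task is to show that hypothesis~\eqref{eq:condition} forces $Q=\{1,\ldots,q\}$.

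So suppose $Q\subsetneq\{1,\ldots,q\}$. First I produce some $k_0\notin Q$ and a \emph{nonzero} point $\xi$ to feed into~\eqref{eq:condition}. If $Q\neq\emptyset$, then $(iii)$ gives $|\xi_{kn}|\to\infty$ for every $k\notin Q$, so any $k_0\notin Q$ works with $\xi:=\xi_{k_0n}$, $n$ large. If $Q=\emptyset$ and $q\geq 2$, fix $n$ large; at most one of the $q$ orbits $G\xi_{kn}$ can equal $\{0\}$ — two being $\{0\}$ would contradict $\mathrm{dist}(G\xi_{kn},G\xi_{k'n})\to\infty$ from $(ii)$ — so there is $k_0$ with $\xi:=\xi_{k_0n}\neq 0$. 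The only remaining case, $q=1$ and $Q=\emptyset$ with $\xi_{1n}$ vanishing, is degenerate: then $(iv)$ yields $\|u_{in}-v_i\|\to 0$, so $\bar v_1$ is already a least energy block-wise nontrivial solution and we are done. In the nondegenerate situation we thus have $k_0\notin Q$, $\xi=\xi_{k_0n}\neq 0$, $G_\xi=G_{k_0}$, $|G\xi|=|G/G_{k_0}|$ by $(ii)$. Next comes the ``superadditivity'' estimate
\[c_{\{1,\ldots,q\}\smallsetminus\{k_0\}}^{\bf\phi}\ \leq\ c_Q^{\bf\phi}+\sum_{k\notin Q,\ k\neq k_0}|G/G_k|\,c_k^{{\bf\phi}|G_k},\]
proved exactly as Lemma~\ref{lem:energy_estimates2}: glue the minimizer $\bf w$ of $\cJ_Q^{\bf\phi}$ on $\cN_Q^{\bf\phi}$, placed at the origin (whose $G$-isotropy is all of $G$, so no symmetry is lost there and the construction can be continued), to $\phi$-symmetrized superpositions of truncated copies of $\bar v_k$ over the orbits $G\xi_{kn}$, $k\notin Q$, $k\neq k_0$; for $n$ large these orbits, together with $\{0\}$, are pairwise distinct by $(ii)$ and by $\xi\neq 0$, and the supports are disjoint, so after the usual rescaling onto $\cN_{\{1,\ldots,q\}\smallsetminus\{k_0\}}^{\bf\phi}$ the trial energy tends to the right-hand side. (Equivalently, iterate Lemma~\ref{lem:energy_estimates2}, peeling off one block at a time against the origin; when $Q=\emptyset$ one also needs the obvious one-orbit analogue of that lemma for the last block.) Substituting this inequality into the energy identity of $(v)$ and using $|G\xi|=|G/G_{k_0}|$, $G_\xi=G_{k_0}$ gives $c^{\bf\phi}\geq c_{\{1,\ldots,q\}\smallsetminus\{k_0\}}^{\bf\phi}+|G\xi|\,c_{k_0}^{{\bf\phi}|G_\xi}$, contradicting~\eqref{eq:condition} for $h=k_0$. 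Therefore $Q=\{1,\ldots,q\}$ and $\bf w$ is the desired solution.

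I expect the main obstacle to be the superadditivity estimate: one must keep each symmetry reduction $G_k$ attached to its own concentration point throughout the gluing, and verify that every orbit entering the construction (or each application of Lemma~\ref{lem:energy_estimates2} in the iterated version) is genuinely distinct from the others — which is exactly where the separation in Theorem~\ref{thm:splitting}$(ii)$ and the nonvanishing of $\xi_{k_0n}$ secured in the first step are used — while the remaining points (selecting the nonzero $\xi$, the degenerate case $q=1$) are routine bookkeeping.
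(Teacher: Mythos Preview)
Your proof is correct and follows essentially the same approach as the paper's: both derive part~$(i)$ directly from Proposition~\ref{prop:ground_states}, and for part~$(ii)$ both apply Theorem~\ref{thm:splitting}, iterate Lemma~\ref{lem:energy_estimates2} to obtain the superadditivity estimate $c_{\widehat Q\smallsetminus\{k_0\}}^{\bf\phi}\leq c_Q^{\bf\phi}+\sum_{k\notin Q,\,k\neq k_0}|G/G_k|\,c_k^{{\bf\phi}|G_k}$, combine it with the energy identity~$(v)$ to contradict~\eqref{eq:condition}, and treat the degenerate case $q=1$, $Q=\emptyset$, $G_1=G$ separately. Your case analysis for selecting a nonzero $\xi$ is slightly more explicit than the paper's, but the content is identical.
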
	

\begin{proof}
$(i)$ is an immediate consequence of Proposition \ref{prop:ground_states}.

$(ii):$ \ Let $Q\subset\widehat Q$ be as in Theorem \ref{thm:splitting}. If $Q=\widehat Q$, the function $\bf w$ given by that theorem is a least energy block-wise nontrivial solution to $(\mathscr S^{\bf\phi})$. 
If $Q\neq\widehat Q$ then, for each $k\notin Q$, there are $(\xi_{kn})$, $G_k$ and $\bar v_k$ with the properties stated in Theorem \ref{thm:splitting}. So we may choose $n_0$ large enough so that $\xi_k:=\xi_{kn_0}$ satisfies $G\xi_k\neq G\xi_{k'}$ if $k,k'\notin Q$ and $k\neq k'$ and, furthermore, $G\xi_k\neq\{0\}$ if $Q\neq\emptyset$ for every $k\notin Q$. Then, as $G_k=G_{\xi_k}$, iterating Lemma \ref{lem:energy_estimates2} and using the identity in Theorem \ref{thm:splitting}$(v)$, we obtain
\[c^{\bf\phi} = c_{Q}^{\bf\phi} + \dsum_{k\notin Q}|G/G_k|\,c^{{\bf\phi}|G_k}_{k}\geq c_{\widehat Q\smallsetminus\{h\}}^{\bf\phi} + |G/G_h|\,c_h^{{\bf\phi}|G_h},\]
where $h\notin Q$ is chosen so that $\xi_h\neq 0$ if $q\geq 2$. This inequality stands in contradiction with \eqref{eq:condition} unless $\xi_h=0$, in which case $q=1$, $Q=\emptyset,$  $G_h=G$, the system $(\mathscr S^{{\bf\phi}|G_h}_h)$ is $(\mathscr S^{\bf\phi})$, and $\bar v_h$ is a least energy block-wise nontrivial solution to $(\mathscr S^{\bf\phi})$. This completes the proof.
\end{proof}

\begin{remark}
\emph{If the system is cooperative, i.e., if $q=1$, condition \eqref{eq:condition} implies that $\F(G)=\{0\}$. This last condition is actually not necessary, see \cite[Theorem 1.6]{cp} and Section \ref{sec:cooperative} below.}
\end{remark}

\begin{corollary} \label{cor:infinite_orbits}
If $|G\xi|=\infty$ for every $\xi\in\rn\smallsetminus\{0\}$, the system $(\mathscr S^{\bf\phi})$ has a least energy block-wise nontrivial solution.
\end{corollary}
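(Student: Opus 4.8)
The plan is to derive Corollary \ref{cor:infinite_orbits} directly from Corollary \ref{cor:main_inequality}$(ii)$ by checking that the hypothesis \eqref{eq:condition} is automatically satisfied when all nonzero orbits are infinite. First I would recall that, by Lemma \ref{lem:N}$(i)$, every energy level $c_h^{{\bf\phi}|G_\xi}$ is strictly positive (it is bounded below by $\frac{p-1}{2p}d_0>0$), and that $c_{\widehat Q\smallsetminus\{h\}}^{\bf\phi}\geq 0$ by definition, with the convention $c_\emptyset^{\bf\phi}=0$. Now fix $h\in\{1,\ldots,q\}$ and $\xi\in\rn\smallsetminus\{0\}$. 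By the hypothesis of the corollary, $|G\xi|=\infty$, hence $|G/G_\xi|=|G\xi|=\infty$ as well, so the term $|G\xi|\,c_h^{{\bf\phi}|G_\xi}=+\infty$. Therefore the right-hand side of \eqref{eq:condition} is $+\infty$, while the left-hand side $c^{\bf\phi}$ is finite: indeed $\cN^{\bf\phi}\neq\emptyset$ by Lemma \ref{lem:N}$(i)$, so $c^{\bf\phi}=\inf_{\cN^{\bf\phi}}\cJ^{\bf\phi}<\infty$. Thus the strict inequality \eqref{eq:condition} holds trivially for every such $h$ and $\xi$.

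Having verified the hypothesis, I would conclude by invoking Corollary \ref{cor:main_inequality}$(ii)$, which then yields a least energy block-wise nontrivial solution to $(\mathscr S^{\bf\phi})$, completing the proof.

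There is essentially no obstacle here: the content is that the assumption ``all nonzero orbits are infinite'' closes the only loophole in Corollary \ref{cor:main_inequality}$(ii)$, namely the possibility that some component of the minimizing sequence runs off to infinity along a finite orbit at a cheaper cost. The one point worth stating carefully is the identification $|G\xi|=|G/G_\xi|$ (already recalled in the paper just before Lemma \ref{lem:energy_estimates2}) and the fact that $c_h^{{\bf\phi}|G_\xi}>0$ strictly, so that multiplying by an infinite cardinal genuinely produces $+\infty$ rather than an indeterminate form; both are immediate from the material already established. A short proof of two or three sentences suffices.

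\begin{proof}
Let $h\in\{1,\ldots,q\}$ and $\xi\in\rn\smallsetminus\{0\}$. By hypothesis $|G\xi|=\infty$, and since $|G\xi|=|G/G_\xi|$, Lemma \ref{lem:N}$(i)$ gives $|G\xi|\,c_h^{{\bf\phi}|G_\xi}=\infty$, because $c_h^{{\bf\phi}|G_\xi}>0$. On the other hand $\cN^{\bf\phi}\neq\emptyset$ by Lemma \ref{lem:N}$(i)$, so $c^{\bf\phi}<\infty$, and $c_{\widehat Q\smallsetminus\{h\}}^{\bf\phi}\geq 0$. Hence $c^{\bf\phi}<c_{\widehat Q\smallsetminus\{h\}}^{\bf\phi}+|G\xi|\,c_h^{{\bf\phi}|G_\xi}$ for every $h$ and every $\xi\in\rn\smallsetminus\{0\}$, i.e., condition \eqref{eq:condition} holds. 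By Corollary \ref{cor:main_inequality}$(ii)$, the system $(\mathscr S^{\bf\phi})$ has a least energy block-wise nontrivial solution.
\end{proof}
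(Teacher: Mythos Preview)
Your proof is correct and takes the same approach as the paper, which simply states that the result is an immediate consequence of Corollary \ref{cor:main_inequality}$(ii)$. You have merely spelled out the verification of condition \eqref{eq:condition} in detail, which is exactly the intended argument.
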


\begin{proof}
This is an immediate consequence of Corollary \ref{cor:main_inequality}\,$(ii)$.
\end{proof}

The following example shows that there are groups with infinite orbits admitting a surjective homomorphism onto $\z_2$ satisfying assumption $(A_\phi)$.

\begin{example} \label{example2}
Let $G_\infty$ be the group generated by $\{\mathrm{e}^{\mathrm{i}\vartheta}:\vartheta\in[0,2\pi)\}\cup\{\tau\}\cup O(N-4)$, acting on a point $(z_1,z_2,y)\in\cc\times\cc\times\r^{N-4}\equiv\rn$ as follows:
	\begin{align*}
		\mathrm{e}^{\mathrm{i}\vartheta}(z_1,z_2,y)&:=(\mathrm{e}^{\mathrm{i}\vartheta}z_1,\mathrm{e}^{\mathrm{i}\vartheta}z_2,y), \qquad\qquad\tau(z_{1},z_{2},y):=(z_{2},z_{1},y),\\
		\alpha(z_1,z_2,y)&:=(z_1,z_2,\alpha y)\quad\text{if \ }\alpha\in O(N-4).
	\end{align*}
and let $\t:G_\infty \to\z_2$ be the homomorphism satisfying $\t(\mathrm{e}^{\mathrm{i}\vartheta})=1$, $\t(\tau)=-1$, and $\t(\alpha)=1$ for every $\alpha\in O(N-4)$. Then $\t$ satisfies $(A_{\t})$, and $|G_\infty\xi|=\infty$ for every $\xi\in\rn\smallsetminus\{0\}$ if $N=4$ or $N\geq 6$. 
\end{example}

\begin{theorem} \label{thm:existence}
Let $N=4$ or $N\geq 6$, and $G:=G_\infty$ and $\t$ be as in \emph{Example} \ref{example2}. Let $\{1,\ldots,q\}=Q^+\cup Q^-$ with $Q^+\cap Q^-=\emptyset$ and set $\phi_h\equiv 1$ if $h\in Q^+$ and $\phi_h=\t$ if $h\in Q^-$. Then there exists a least energy block-wise nontrivial solution $\bf w=(\bar w_1,\ldots,\bar w_q)$ to the system $(\mathscr S^{\bf\phi})$ such that every nontrivial component of $\bar w_h$ is positive if $h\in Q^+$ and every nontrivial component of $\bar w_h$ changes sign if $h\in Q^-$.
\end{theorem}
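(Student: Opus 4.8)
The strategy is to apply Corollary \ref{cor:infinite_orbits} with the group $G=G_\infty$ and the homomorphisms $\bf\phi=(\phi_1,\ldots,\phi_q)$ prescribed in the statement. First I would check that this choice is admissible: by Example \ref{example2}, each $\phi_h$ (whether $\phi_h\equiv 1$ or $\phi_h=\t$) is a continuous homomorphism $G_\infty\to\z_2$ satisfying $(A_{\phi_h})$, which is exactly what the construction in Section \ref{sec:preliminaries} requires. Also by Example \ref{example2}, since $N=4$ or $N\geq 6$ we have $|G_\infty\xi|=\infty$ for every $\xi\in\rn\smallsetminus\{0\}$. Hence Corollary \ref{cor:infinite_orbits} applies directly and yields a least energy block-wise nontrivial solution $\bf w=(\bar w_1,\ldots,\bar w_q)$ to the system $(\mathscr S^{\bf\phi})$, i.e. a minimizer of $\cJ^{\bf\phi}$ on $\cN^{\bf\phi}$, which (by the discussion following Lemma \ref{lem:N}, citing \cite[Lemma 2.4]{cp}) is a critical point of $\cJ^{\bf\phi}$ and therefore a block-wise nontrivial solution to the original system \eqref{eq:system}.

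It then remains to verify the sign properties of the components. For $h\in Q^-$ we have $\phi_h=\t$ surjective, so every nontrivial function in $H^1(\rn)^{\phi_i}=H^1(\rn)^{\t}$ changes sign (and is nonradial), as noted in the paragraph introducing $\phi$-equivariant functions; in particular every nontrivial component $w_i$ with $i\in I_h$, $h\in Q^-$, changes sign. For $h\in Q^+$ we have $\phi_h\equiv 1$, so the components $w_i$ with $i\in I_h$ are merely $G_\infty$-invariant and need not be nonnegative a priori. Here I would invoke Remark \ref{rem:positive} with $\overline Q:=Q^+$: since $\phi_h\equiv 1$ for all $h\in\overline Q$ and $\bf w$ is a least energy block-wise nontrivial solution, replacing $w_i$ by $|w_i|$ for each $i\in I_{\overline Q}=I_{Q^+}$ produces another least energy block-wise nontrivial solution whose $i$-th component is nonnegative for every such $i$. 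After this replacement, by the strong maximum principle (applied componentwise to the equation $-\Delta w_i+w_i=\big(\sum_{j}\beta_{ij}|w_j|^p|w_i|^{p-2}\big)w_i$, once $w_i\geq 0$ and $w_i\not\equiv 0$), each nontrivial component with index in $I_{Q^+}$ is in fact strictly positive.

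There is essentially no serious obstacle in this argument — it is a matter of correctly matching the abstract framework of Section \ref{sec:preliminaries} to the concrete group of Example \ref{example2} and then reading off the sign information from the definition of $\phi$-equivariance together with Remark \ref{rem:positive}. If anything deserves a line of care, it is the observation that the replacement $w_i\mapsto|w_i|$ on the blocks indexed by $Q^+$ does not disturb the $\t$-equivariance of the blocks indexed by $Q^-$ (it does not, since it only modifies components with $i\in I_{Q^+}$ and the equations for those components involve $|w_j|$, which is unchanged), so the resulting tuple still lies in $\cH^{\bf\phi}$ and is still a critical point. One should also note that this theorem does not yet assert the solution is \emph{fully} nontrivial — only block-wise nontrivial — which is consistent with the role played by conditions $(B_2)$–$(B_3)$ later (Section \ref{sec:fully_nontrivial}); here only $(B_1)$ is assumed.
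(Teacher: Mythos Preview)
Your proposal is correct and follows essentially the same approach as the paper's own proof: invoke Corollary \ref{cor:infinite_orbits} (using that $|G_\infty\xi|=\infty$ for $\xi\neq 0$ when $N=4$ or $N\geq 6$), then use Remark \ref{rem:positive} to make the $Q^+$-components positive and the surjectivity of $\t$ to conclude that the nontrivial $Q^-$-components change sign. Your added remarks about the strong maximum principle and about the replacement $w_i\mapsto|w_i|$ not disturbing the $\t$-equivariant blocks are correct elaborations of what the paper leaves implicit.
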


\begin{proof}
The existence of a least energy block-wise nontrivial solution to $(\mathscr S^{\bf\phi})$ is given by Corollary \ref{cor:infinite_orbits}. By Remark \ref{rem:positive} we may choose the nontrivial components $w_i$ to be positive if $i\in I_{Q^+}$, and every nontrivial $w_i$ with $i\in I_{Q^-}$ changes sign because $\phi_h$ is surjective for $h\in Q^-$.
\end{proof}

\section {Fully nontrivial solutions} 
\label{sec:fully_nontrivial}

Fix a closed subgroup $G$ of $O(N)$ and, for each $h=1,\ldots,q$, fix a continuous homomorphism $\phi_h:G\to\z_2$ satisfying $(A_{\phi_h})$. Set $\bf\phi:=(\phi_1,\ldots,\phi_q)$.
Our aim now is to give conditions guaranteeing that every least energy block-wise nontrivial solution to the system $(\mathscr S^{\bf\phi})$ is fully nontrivial. Recall we are always assuming $N\geq 4$, so $1<p<2$.

\begin{lemma} \label{lem:cp}
Assume $(B_1)$. Let $\bf u=(u_1,\ldots,u_\ell)\in\cN^{\bf\phi}$ be such that some component $u_i=0$ and $i\in I_h$. Given $\vp\in H^1(\rn)^{\phi_h}$ and $\eps>0$, define $\bf u_\eps=(u_{\eps 1},\ldots,u_{\eps \ell})$ by $u_{\eps i}:=\eps\vp$ and $u_{\eps j}:=u_j$ if $j\neq i$. Then, there exist $\eps_0>0$ and a $\cC^1$-map $\bf t:(-\eps_0,\eps_0)\to (0,\infty)^q$ satisfying $\bf t(\eps)\bf u_\eps\in\cN^{\bf\phi}$, \ $\bf t(0)=(1,\ldots,1)$, \ $\bf t'(0)=(0,\ldots,0)$ \ and
$$\cJ^{\bf\phi}(\bf t(\eps)\bf u_\eps)-\cJ^{\bf\phi}(\bf u)=\eps^p\Big(-\frac{1}{p}\sum_{\substack{j=1 \\ j\neq i}}^\ell\beta_{ij}\irn|\vp|^p|u_j|^p + o(1)\Big).$$
\end{lemma}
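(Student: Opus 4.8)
The plan is to treat $\bf t(\eps)\bf u_\eps\in\cN^{\bf\phi}$ as a constraint defined by the vanishing of a vector-valued map and to apply the implicit function theorem. Concretely, for $\bf s=(s_1,\ldots,s_q)\in(0,\infty)^q$ and $\eps$ near $0$, set $F_h(\bf s,\eps):=\partial_{\bar u_h}\cJ^{\bf\phi}(\bf s\bf u_\eps)[s_h\bar u_{\eps h}]$ for $h=1,\ldots,q$, which is a smooth function of $(\bf s,\eps)$ because the nonlinearity is a polynomial in the $s_h$'s and the integrals $\irn|u_{\eps i}|^p|u_{\eps j}|^p$ depend smoothly (indeed polynomially, after pulling out the $\eps$ factor from $u_{\eps i}=\eps\vp$) on $\eps$. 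At $\eps=0$ we have $\bf u_0=(u_1,\ldots,u_{i-1},0,u_{i+1},\ldots,u_\ell)$, and since $\bf u\in\cN^{\bf\phi}$ with $u_i=0$ we get $\bf u_0=\bf u$ as a point of $\cH^{\bf\phi}$ and $F_h((1,\ldots,1),0)=\partial_{\bar u_h}\cJ^{\bf\phi}(\bf u)\bar u_h=0$ for all $h$. The Jacobian $\big(\partial_{s_k}F_h\big)_{h,k}$ at $((1,\ldots,1),0)$ is exactly the matrix that appears in \cite[Lemma 2.2]{cp} (equivalently Lemma \ref{lem:N}), governing the uniqueness of the scaling bringing a point to $\cN^{\bf\phi}$; using $(B_1)$ (the off-diagonal-block coefficients $\beta_{ij}$ for $(i,j)\in\cK_h$ are negative) this matrix is invertible. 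Hence the implicit function theorem yields $\eps_0>0$ and a unique $\cC^1$-map $\bf t:(-\eps_0,\eps_0)\to(0,\infty)^q$ with $\bf t(0)=(1,\ldots,1)$ and $\bf t(\eps)\bf u_\eps\in\cN^{\bf\phi}$.

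Next I would compute $\bf t'(0)$. Differentiating the identity $F_h(\bf t(\eps),\eps)\equiv 0$ at $\eps=0$ gives $\sum_k\partial_{s_k}F_h\cdot t_k'(0)+\partial_\eps F_h=0$. The point is that $\partial_\eps F_h((1,\ldots,1),0)=0$ for every $h$: writing out $F_h$, the only $\eps$-dependence sits in terms containing $u_{\eps i}=\eps\vp$, and every such term carries $|u_{\eps i}|^p=\eps^p|\vp|^p$ (when $h$ is the block of $i$, the diagonal term $|u_{\eps i}|^{2p}=\eps^{2p}|\vp|^{2p}$ and the cross terms $|u_{\eps i}|^p|u_{\eps j}|^p=\eps^p|\vp|^p|u_j|^p$; when $h$ is another block, the coupling to block $I_h$ involves $|u_{\eps i}|^p|u_j|^p=\eps^p|\vp|^p|u_j|^p$ for $j\in I_h$). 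Since $p>1$, all these terms are $o(\eps)$, so $\partial_\eps F_h|_{\eps=0}=0$, and the invertibility of the Jacobian forces $\bf t'(0)=(0,\ldots,0)$.

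Finally, for the energy expansion, I would Taylor expand $\eps\mapsto g(\eps):=\cJ^{\bf\phi}(\bf t(\eps)\bf u_\eps)$. Because $\bf t(\eps)\bf u_\eps\in\cN^{\bf\phi}$ we have the clean formula $g(\eps)=\frac{p-1}{2p}\|\bf t(\eps)\bf u_\eps\|^2=\frac{p-1}{2p}\sum_h t_h(\eps)^2\|\bar u_{\eps h}\|^2$; alternatively, and more transparently for isolating the $\eps^p$ term, expand $\cJ^{\bf\phi}$ directly. Write $\cJ^{\bf\phi}(\bf t(\eps)\bf u_\eps)=\cJ^{\bf\phi}(\bf t(\eps)\bf u_0)+\Delta(\eps)$, where $\bf u_0=\bf u$ and $\Delta(\eps)$ collects precisely the terms involving the $i$-th component, namely $\Delta(\eps)=-\frac{1}{2p}\beta_{ii}t_h(\eps)^{2p}\eps^{2p}\irn|\vp|^{2p}-\frac{1}{p}\sum_{j\neq i}\beta_{ij}t_h(\eps)^p t_{k(j)}(\eps)^p\eps^p\irn|\vp|^p|u_j|^p$, with $k(j)$ the block index of $j$. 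For the first piece, since $\bf t(0)=(1,\ldots,1)$ is a critical point of $\bf s\mapsto\cJ^{\bf\phi}(\bf s\bf u)$ on $(0,\infty)^q$ (by Lemma \ref{lem:N}(ii), $\bf u\in\cN^{\bf\phi}$ is the maximizer) and $\bf t'(0)=0$, a second-order Taylor expansion gives $\cJ^{\bf\phi}(\bf t(\eps)\bf u_0)-\cJ^{\bf\phi}(\bf u)=O(|\bf t(\eps)-\bf t(0)|^2)+o(|\bf t(\eps)-\bf t(0)|)=o(\eps^p)$ — here I use $\bf t(\eps)-\bf t(0)=o(\eps)$ from $\bf t'(0)=0$, hence $|\bf t(\eps)-\bf t(0)|^2=o(\eps^2)=o(\eps^p)$ since $p<2$. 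For $\Delta(\eps)$, using $t_h(\eps)\to1$, $t_{k(j)}(\eps)\to1$ and $\eps^{2p}=o(\eps^p)$, we get $\Delta(\eps)=-\frac{\eps^p}{p}\sum_{j\neq i}\beta_{ij}\irn|\vp|^p|u_j|^p+o(\eps^p)$. Adding the two contributions yields the claimed formula. The main obstacle is the bookkeeping in the last step: one must be careful that the $o(\eps^p)$ error from the scaling factors genuinely is $o(\eps^p)$ and not $O(\eps^p)$, which is exactly where $\bf t'(0)=0$ (forcing quadratic smallness of $\bf t(\eps)-\mathbf 1$) together with the strict inequality $p<2$ (valid since $N\geq 4$) is essential; everything else is a routine application of the implicit function theorem and Taylor's formula.
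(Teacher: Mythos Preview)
Your argument is correct and is essentially the approach of \cite[Lemmas~3.1 and~3.3]{cp}, which is all the paper invokes here (it gives no self-contained proof). The implicit function theorem step, the computation $\partial_\eps F_h|_{\eps=0}=0$ forcing $\bf t'(0)=0$, and the splitting $\cJ^{\bf\phi}(\bf t(\eps)\bf u_\eps)=\cJ^{\bf\phi}(\bf t(\eps)\bf u)+\Delta(\eps)$ with the crucial observation that $|\bf t(\eps)-(1,\ldots,1)|^2=o(\eps^2)=o(\eps^p)$ (using $p<2$) are exactly the right ingredients.

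Two minor points of bookkeeping. First, your expression for $\Delta(\eps)$ omits the quadratic term $\tfrac{1}{2}t_h(\eps)^2\eps^2\|\vp\|^2$ coming from $\|u_{\eps i}\|^2$; this is $O(\eps^2)=o(\eps^p)$ and harmless, but ``$\Delta(\eps)$ collects precisely the terms involving the $i$-th component'' should include it. Second, the $\eps$-dependence of $F$ enters through $|\eps|^p$ and $|\eps|^{2p}$, which are $\cC^1$ (since $p>1$) but not literally polynomial; this does not affect the IFT application, only your parenthetical phrasing.
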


\begin{proof}
See \cite[Lemmas 3.1 and 3.3]{cp}.
\end{proof}

Let $d_{\bf\phi}>0$ be as in Lemma \ref{lem:N} and set
$$S^{\bf\phi}:=\min_{h=1,\ldots,q}\,\inf_{\substack{v\in H^1(\rn)^{\phi_h} \\ v\neq 0}}\frac{\|v\|^2}{|v|_{2p}^2},$$
where $|v|_{2p}$ denotes the norm of $v$ in $L^{2p}(\rn)$. Define
\begin{equation}\label{eq:C*}
C_{\bf\phi}:=\left(\frac{pd_{\bf\phi}}{(p-1)(S^{\bf\phi})^\frac{p}{p-1}}\right)^p.
\end{equation}
We stress that $C_{\bf\phi}$ depends on the chosen symmetries. We shall consider the following assumption.
\begin{itemize}
\item[$(B_3^{\bf\phi})$] If $q\geq 2$ then, for every $h\in\{1,\ldots,q\}$ such that $\ell_{h}-\ell_{h-1}\geq 2$, the following inequality holds true:
$$\Big(\min_{\{i,j\}\in E_h}\beta_{ij}\Big)\left[\frac{\dmin_{h=1,\ldots,q}\dmax_{i\in I_h}\beta_{ii}}{\dsum_{(i,j)\in\cI_h}\beta_{ij}}\right]^\frac{p}{p-1}>C_{\bf\phi}\sum_{(i,j)\in\cK_h}|\beta_{ij}|.$$
\end{itemize}

If $i\in I_h$ we denote by $N_h(i):=\{j\in I_h:j\neq i, \ \beta_{ij}>0\}$ the neighborhood of the vertex $i$ in the graph whose set of vertices is $I_h$ and whose set of edges is $E_h:=\{\{i,j\}:i,j\in I_h, \ i\neq j, \ \beta_{ij}>0\}$.

\begin{lemma} \label{lem:neighbors}
Assume $(B_1)$ and $(B_3^{\bf\phi})$. Let $\bf u=(u_1,\ldots,u_\ell)\in\cN^{\bf\phi}$ be such that $\cJ^{\bf\phi}(\bf u)=c^{\bf\phi}$. If $u_{j_0}\neq 0$ and $j_0\in I_h$, then $u_i\neq 0$ for every $i\in N_h(j_0)$.
\end{lemma}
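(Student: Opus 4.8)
The plan is to argue by contradiction: suppose $u_{j_0}\neq 0$ with $j_0\in I_h$ but $u_i=0$ for some $i\in N_h(j_0)$, i.e., $i,j_0\in I_h$ with $\beta_{ij_0}>0$. The idea is to use Lemma \ref{lem:cp} to build a competitor of strictly lower energy than $c^{\bf\phi}$. Since $u_i=0$, apply Lemma \ref{lem:cp} with $\vp:=u_{j_0}$ (which lies in $H^1(\rn)^{\phi_h}$ since $i$ and $j_0$ are in the same block, so $\phi_i=\phi_{j_0}=\phi_h$), perturbing the $i$-th component to $\eps u_{j_0}$. This produces $\bf t(\eps)\bf u_\eps\in\cN^{\bf\phi}$ with
$$\cJ^{\bf\phi}(\bf t(\eps)\bf u_\eps)-c^{\bf\phi}=\eps^p\Big(-\frac1p\sum_{\substack{j=1\\ j\neq i}}^\ell\beta_{ij}\irn|u_{j_0}|^p|u_j|^p+o(1)\Big).$$

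First I would isolate the dominant term in the sum: the $j=j_0$ term contributes $-\frac1p\beta_{ij_0}\irn|u_{j_0}|^{2p}$, which is \emph{strictly negative} because $\beta_{ij_0}>0$ and $u_{j_0}\not\equiv 0$. So I need to control the remaining terms $-\frac1p\sum_{j\neq i,j_0}\beta_{ij}\irn|u_{j_0}|^p|u_j|^p$ and show that in aggregate they cannot overturn this negative contribution. Here the sign structure from $(B_1)$ helps: for $j\in I_h$ (same block) $\beta_{ij}\geq 0$, so those terms are $\leq 0$ and only help; the only terms that could be positive come from $j\in I_k$ with $k\neq h$, where $\beta_{ij}<0$, giving a positive contribution $\frac1p|\beta_{ij}|\irn|u_{j_0}|^p|u_j|^p$. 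The crux is therefore to bound $\sum_{(i,j)\in\cK_h}|\beta_{ij}|\irn|u_{j_0}|^p|u_j|^p$ against $\beta_{ij_0}\irn|u_{j_0}|^{2p}$, or more precisely to show the whole bracket is negative for small $\eps$, which by continuity reduces to showing the bracketed quantity at the relevant limit is negative.

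The estimate I would use: by Hölder, $\irn|u_{j_0}|^p|u_j|^p\leq |u_{j_0}|_{2p}^p|u_j|_{2p}^p$, and by definition of $S^{\bf\phi}$ we have $|u_j|_{2p}^2\leq (S^{\bf\phi})^{-1}\|u_j\|^2$, with $\sum_j\|u_j\|^2=\|\bf u\|^2=\frac{2p}{p-1}c^{\bf\phi}$; combined with the upper bound $c^{\bf\phi}\leq c^{\bf\phi}\leq d_{\bf\phi}(\min_h\max_i\beta_{ii})^{-1/(p-1)}$ from Lemma \ref{lem:N}(iii), this produces a bound on $|u_j|_{2p}^p$ and hence on $\sum_{(i,j)\in\cK_h}|\beta_{ij}|\irn|u_{j_0}|^p|u_j|^p$ of the form $(\text{const})\cdot C_{\bf\phi}\big(\sum_{(i,j)\in\cK_h}|\beta_{ij}|\big)|u_{j_0}|_{2p}^{2p}$ after also invoking a lower bound $|u_{j_0}|_{2p}^{2p}\gtrsim$ (something), which one gets from $u_{j_0}$ being a nonzero component of a point on $\cN^{\bf\phi}$ together with the Nehari/energy identities and the definition of $\sum_{(i,j)\in\cI_h}\beta_{ij}$ controlling $\|\bar u_h\|^2$. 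Matching constants carefully should show that $(B_3^{\bf\phi})$ — precisely the inequality $\big(\min_{\{i,j\}\in E_h}\beta_{ij}\big)\big[\frac{\min_h\max_i\beta_{ii}}{\sum_{(i,j)\in\cI_h}\beta_{ij}}\big]^{p/(p-1)}>C_{\bf\phi}\sum_{(i,j)\in\cK_h}|\beta_{ij}|$ — forces the bracket to be strictly negative, since $\beta_{ij_0}\geq\min_{\{i,j\}\in E_h}\beta_{ij}$. Then for $\eps>0$ small enough, $\cJ^{\bf\phi}(\bf t(\eps)\bf u_\eps)<c^{\bf\phi}$, contradicting the definition of $c^{\bf\phi}$ as the infimum over $\cN^{\bf\phi}$.

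The main obstacle I anticipate is the bookkeeping of constants so that exactly the form of $(B_3^{\bf\phi})$ emerges — in particular, producing the factor $\big[\min_h\max_i\beta_{ii}/\sum_{(i,j)\in\cI_h}\beta_{ij}\big]^{p/(p-1)}$ on the left. The $\min_h\max_i\beta_{ii}$ part comes from the upper energy bound in Lemma \ref{lem:N}(iii) (it bounds $\|u_j\|$, hence the positive competition terms from above), while the denominator $\sum_{(i,j)\in\cI_h}\beta_{ij}$ must come from bounding $|u_{j_0}|_{2p}^{2p}$ from \emph{below}: on $\cN^{\bf\phi}$, $\|\bar u_h\|^2=\sum_{(i,j)\in\cI_h}\beta_{ij}\irn|u_i|^p|u_j|^p\le\big(\sum_{(i,j)\in\cI_h}\beta_{ij}\big)\max_{i,j\in I_h}\irn|u_i|^p|u_j|^p$, combined with $\|\bar u_h\|^2\ge d_0$, so that some integral $\irn|u_i|^p|u_j|^p$ — and after summing the block-connectedness/graph structure in $(B_2)$, eventually $|u_{j_0}|_{2p}^{2p}$ itself — is bounded below by $d_0/\sum_{(i,j)\in\cI_h}\beta_{ij}$; tracking how $(B_2)$ propagates this lower bound from the edge $\{i,j_0\}$ to $u_{j_0}$ is the delicate combinatorial point, and is presumably where the constant $C_{\bf\phi}$ (rather than a cleaner universal constant) enters. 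All the analytic ingredients are standard (Hölder, Sobolev embedding $H^1\hookrightarrow L^{2p}$ valid since $1<p<\frac{N}{N-2}$, and the smooth dependence of $\bf t(\eps)$ from Lemma \ref{lem:cp}); the work is purely in assembling them.
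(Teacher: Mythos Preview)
Your overall strategy --- contradict minimality via Lemma~\ref{lem:cp} with a suitable test function --- matches the paper exactly, and your bookkeeping for the \emph{unfavorable} cross-block terms (H\"older, Sobolev, the energy bound from Lemma~\ref{lem:N}$(iii)$) is precisely what the paper does. The gap is in the \emph{favorable} term, and you have correctly identified it yourself: with your choice $\vp=u_{j_0}$, the good contribution is $\beta_{ij_0}|u_{j_0}|_{2p}^{2p}$, and to make the constants in $(B_3^{\bf\phi})$ appear you need a quantitative lower bound on $|u_{j_0}|_{2p}^{2p}$. But nothing in the hypotheses gives you one --- $u_{j_0}\neq 0$ is purely qualitative. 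Your proposed workaround via $(B_2)$ is a dead end: $(B_2)$ is \emph{not} assumed in this lemma (it is only used afterward, in Theorem~\ref{thm:fully_nontrivial}, to walk along the graph), and in any case graph-connectedness cannot manufacture a lower bound on the $L^{2p}$-norm of a \emph{particular} component.

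The paper's fix is simple and removes the difficulty entirely: do not insist on $\vp=u_{j_0}$. Instead, denoting the vanishing index by $i_0$ (your $i$), take $i_0^*\in N_h(i_0)$ maximizing $|u_j|_{2p}$ among $j\in N_h(i_0)$, and set $\vp:=u_{i_0^*}$. Since $j_0\in N_h(i_0)$ and $u_{j_0}\neq 0$, we have $|u_{i_0^*}|_{2p}>0$. The favorable term is now at least $\beta_{i_0 i_0^*}|u_{i_0^*}|_{2p}^{2p}\geq\big(\min_{E_h}\beta_{ij}\big)|u_{i_0^*}|_{2p}^{2p}$, and the crucial point is that the Nehari inequality $\|\bar u_h\|^2\le\sum_{(i,j)\in\cI_h}\beta_{ij}\int|u_i|^p|u_j|^p$ together with Sobolev and H\"older yields
\[
S^{\bf\phi}|u_{i_0^*}|_{2p}^2 \le \Big(\sum_{(i,j)\in\cI_h}\beta_{ij}\Big)|u_{i_0^*}|_{2p}^{2p},
\]
hence $|u_{i_0^*}|_{2p}^{2p}\ge\big[S^{\bf\phi}/\sum_{\cI_h}\beta_{ij}\big]^{p/(p-1)}$. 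This is exactly the missing factor, and it makes the left-hand side of $(B_3^{\bf\phi})$ drop out directly --- no combinatorics, no $(B_2)$, no tracking of constants through the graph. Everything else in your sketch then goes through verbatim.
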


\begin{proof}
Arguing by contradiction, assume that $u_{i_0}=0$ for some $i_0\in N_h(j_0)$. Choose $i_0^*\in N_h(i_0)$ such that $|u_{i_0^*}|_{2p}\geq|u_j|_{2p}$ for every $j\in N_h(i_0)$. As $j_0\in N_h(i_0)$ and $u_{j_0}\neq 0$, we have that $|u_{i_0^*}|_{2p}>0$. Furthermore,
$$S^{\bf\phi}|u_{i_0^*}|_{2p}^2\leq \|u_{i_0^*}\|^2\leq \sum_{(i,j)\in\cI_h}\beta_{ij} \irn|u_i|^p|u_j|^p\leq \sum_{(i,j)\in\cI_h}\beta_{ij} |u_i|_{2p}^p|u_j|_{2p}^p\leq\sum_{(i,j)\in\cI_h}\beta_{ij}|u_{i_0^*}|_{2p}^{2p}.$$
Therefore,
\begin{align}\label{eq:nontrivial1}
\sum_{j\in N_h(i_0)}\beta_{i_0j}\irn|u_{i_0^*}|^p|u_j|^p &\geq \beta_{i_0i_0^*}|u_{i_0^*}|_{2p}^{2p}\geq\Big(\min_{\{i,j\}\in E_h}\beta_{ij}\Big)|u_{i_0^*}|_{2p}^{2p} \\
&\geq \Big(\min_{\{i,j\}\in E_h}\beta_{ij}\Big)\left[\frac{S^{\bf\phi}}{\dsum_{(i,j)\in\cI_h}\beta_{ij}}\right]^\frac{p}{p-1}.\nonumber
\end{align}
On the other hand, as $\cJ^{\bf\phi}(\bf u)=\frac{p-1}{2p}\|\bf u\|^2=c^{\bf\phi}$, from Lemma \ref{lem:N} we derive
\begin{align}\label{eq:nontrivial2}
\Big(\irn|u_{i_0^*}|^p|u_j|^p\Big)^\frac{2}{p}&\leq |u_{i_0^*}|_{2p}^2|u_j|_{2p}^2\leq(S^{\bf\phi})^{-2}\|u_{i_0^*}\|^2\|u_j\|^2\leq \left((S^{\bf\phi})^{-1}\|\bf u\|^2\right)^2 \\
&\leq\left(\frac{pd_{\bf\phi}}{(p-1)S^{\bf\phi}}\Big(\min_{h=1,\ldots,q}\max_{i\in I_h}\beta_{ii}\Big)^{-\frac{1}{p-1}}\right)^2.\nonumber
\end{align}
Set $\vp:=u_{i_0^*}$. If $q=1$ then
$$\sum_{\substack{j=1 \\ j\neq i_0}}^\ell\beta_{i_0j}\irn|\vp|^p|u_j|^p =\sum_{j\in N_h(i_0)}\beta_{i_0j}\irn|u_{i_0^*}|^p|u_j|^p>0.$$
If $q\geq 2$, inequalities \eqref{eq:nontrivial1} and \eqref{eq:nontrivial2} and assumption $(B_3^{\bf\phi})$ yield
\begin{align*}
&\sum_{\substack{j=1 \\ j\neq i_0}}^\ell\beta_{i_0j}\irn|\vp|^p|u_j|^p =\sum_{j\in N_h(i_0)}\beta_{i_0j}\irn|u_{i_0^*}|^p|u_j|^p-\sum_{\substack{k=1 \\ k\neq h}}^q\sum_{j\in I_k}|\beta_{i_0j}|\irn|u_{i_0^*}|^p|u_j|^p \\
&\qquad\geq \Big(\min_{\{i,j\}\in E_h}\beta_{ij}\Big)\left[\frac{S^{\bf\phi}}{\dsum_{(i,j)\in\cI_h}\beta_{ij}}\right]^\frac{p}{p-1}-\dsum_{(i,j)\in\cK_h}|\beta_{ij}|\left[\frac{pd_{\bf\phi}}{(p-1)S^{\bf\phi}}\right]^p\Big(\min_{h=1,\ldots,q}\max_{i\in I_h}\beta_{ii}\Big)^{-\frac{p}{p-1}}>0.
\end{align*}
But then, for any $q$, Lemma \ref{lem:cp} implies that $\bf t(\eps)\bf u_\eps\in\cN^{\bf\phi}$ and $\cJ^{\bf\phi}(\bf t(\eps)\bf u_\eps)<\cJ^{\bf\phi}(\bf u)=c^{\bf\phi}$ for $\eps$ small enough. This is a contradiction.
\end{proof}

\begin{theorem} \label{thm:fully_nontrivial}
If $(B_1)$, $(B_2)$ and $(B_3^{\bf\phi})$ hold true, then every least energy block-wise nontrivial solution to $(\mathscr S^{\bf\phi})$ is fully nontrivial.
\end{theorem}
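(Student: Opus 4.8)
The plan is to combine the connectivity hypothesis $(B_2)$ with Lemma~\ref{lem:neighbors} through a propagation-along-edges argument. Let $\bf u=(u_1,\dots,u_\ell)$ be a least energy block-wise nontrivial solution to $(\mathscr S^{\bf\phi})$, so $\cJ^{\bf\phi}(\bf u)=c^{\bf\phi}$. By definition of $\cN^{\bf\phi}$ we have $\|\bar u_h\|\neq 0$ for every $h=1,\dots,q$, hence in each block $I_h$ there is at least one index $j_0=j_0(h)$ with $u_{j_0}\neq 0$. The claim to prove is that in fact $u_i\neq 0$ for \emph{every} $i\in I_h$, for every $h$.

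Fix $h$ and work inside the graph with vertex set $I_h$ and edge set $E_h$, which is connected by $(B_2)$. First I would record the key local step: by Lemma~\ref{lem:neighbors}, if $u_{j}\neq 0$ for some $j\in I_h$, then $u_i\neq 0$ for every $i\in N_h(j)$, i.e.\ for every neighbor of $j$ in this graph. Now let $V:=\{i\in I_h: u_i\neq 0\}$; we have just shown $V\neq\emptyset$ and that $V$ contains the full neighborhood of each of its vertices. In graph-theoretic terms, $V$ is a nonempty set with no edge leaving it (every edge incident to a vertex of $V$ has its other endpoint in $V$ as well). Since the graph $(I_h,E_h)$ is connected, the only such set is $V=I_h$: otherwise, picking $i\in V$ and $i'\in I_h\setminus V$, a path in the graph from $i$ to $i'$ would have to cross from $V$ to its complement along some edge $\{j,j'\}$ with $j\in V$, $j'\notin V$, contradicting $j'\in N_h(j)\subset V$. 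Hence $u_i\neq 0$ for all $i\in I_h$, and letting $h$ range over $1,\dots,q$ gives that $\bf u$ is fully nontrivial.

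The only genuine content is the local step Lemma~\ref{lem:neighbors}, which is already proved; the rest is the elementary observation that a nonempty union-of-closed-neighborhoods set in a connected graph must be everything. One point to be careful about: Lemma~\ref{lem:neighbors} is stated for indices $i,j_0$ in the \emph{same} block $I_h$ (its neighborhood $N_h$ is taken within that block), so the propagation stays inside each block and never attempts to cross the negative off-block couplings $\cK_h$ — which is exactly why $(B_3^{\bf\phi})$ is needed only to control those cross terms inside the proof of Lemma~\ref{lem:neighbors} and plays no further role here. No step I foresee is a real obstacle; the argument is a short combinatorial closure once the lemma is in hand.
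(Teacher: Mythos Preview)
Your proposal is correct and follows essentially the same approach as the paper: start from a nontrivial component $u_{j_0}$ in each block, then use Lemma~\ref{lem:neighbors} to propagate nontriviality along edges of the connected graph $(I_h,E_h)$ until every vertex is reached. The paper phrases the propagation as walking along a path from $j_0$ to an arbitrary $i\in I_h$, while you phrase it as ``$V$ is a nonempty set closed under taking neighbors in a connected graph, hence $V=I_h$''; these are equivalent formulations of the same elementary argument.
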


\begin{proof}
Let $\bf u=(u_1,\ldots,u_\ell)\in\cN^{\bf\phi}$ be such that $\cJ^{\bf\phi}(\bf u)=c^{\bf\phi}$. Then, for each $h\in\{1,\ldots,q\}$ there exists $j_0\in I_h$ such that $u_{j_0}\neq 0$. Let $i\in I_h$. By assumption $(B_2)$ there exist $j_1,\ldots,j_m=i\in I_h$ such that $j_i\in N_h(j_{i-1})$ for all $i=1,\ldots, m$. Then Lemma \ref{lem:neighbors} implies that $u_i\neq 0$. This shows that every component of $\bar u_h$ is nontrivial for every $h$.
\end{proof}

Our aim now is to obtain some energy bounds for solutions with positive and sign-changing components. We start with the cooperative case.

\section{The cooperative system}
\label{sec:cooperative}

Throughout this section we assume $(B_1)$ and $(B_2)$.
If $\phi:G\to\z_2$ is a continuous homomorphism, we consider the functional $J^\phi:H^1(\rn)^\phi\to\r$ given by
\[J^\phi(w):=\frac{1}{2}\|w\|^2-\frac{1}{2p}\irn|w|^{2p}\]
and the Nehari manifold
\[\cM^\phi:=\Big\{w\in H^1(\rn)^\phi:w\neq 0, \ \|w\|^2=\irn|w|^{2p}\Big\}\]
associated to the problem
\begin{equation} \label{eq:basic_prob}
-\Delta w+w=|w|^{2p-2}w,\qquad w\in H^1(\rn)^\phi.
\end{equation}
Set
\[\mathfrak{c}^\phi:=\inf_{w\in\cM^\phi}J^\phi(w).\]
If $G$ is the trivial group, i.e., if there are no symmetries, we omit the superscript from the notation and set
\[\mathfrak{c}:=\inf_{w\in\cM}J.\]
It is well known that this value is attained at a positive radial solution $\omega$ to the problem
\begin{equation} \label{eq:omega}
-\Delta w+w=|w|^{2p-2}w,\qquad w\in H^1(\rn).
\end{equation}

We also consider the function $F_h:\r^{\ell_h-\ell_{h-1}}\to \r$ given by
\[F_h(\bar s):=\frac{1}{2}\dsum_{i\in I_h}s_i^2-\frac{1}{2p}\dsum_{{(i,j)\in \cI_h}}\beta_{ij}|s_i|^p|s_j|^p,\qquad \bar s=(s_{\ell_{h-1}+1},\ldots,s_{\ell_h}),\]
and the set 
\[M_h:=\Big\{\bar s\in \r^{\ell_h-\ell_{h-1}}:\bar s\neq \bar 0, \ \dsum_{i\in I_h}s_i^2=\dsum_{(i,j)\in \cI_h}\beta_{ij}|s_i|^p|s_j|^p\Big\}.\]
Note that
\[\inf_{M_h}F_h=\dfrac{p-1}{2p}\mu_h,\]
with $\mu_h$ as defined in \eqref{eq:mu_h}. Clearly, $\mu_h>0$ and it is attained at some $\bar t=(t_{\ell_{h-1}+1},\ldots,t_{\ell_h})\in M_h.$ 

In the following statement we use the Notation \ref{notation}.

\begin{proposition} \label{prop:cooperative}
The following statements hold true:
	\begin{itemize}
		\item[$(i)$] If \ $\bar s=(s_{\ell_{h-1}+1},\ldots,s_{\ell_h})\in M_h$ and $w\in\cM^{\phi_h}$, set ${\bar s w:=(s_{\ell_{h-1}+1}w,\ldots,s_{\ell_h}w)}$. Then $\bar s w\in\cN_h^{\bf\phi}$ and 
		\[\cJ_h^{\bf\phi}(\bar s w)=\frac{2p}{p-1} F_h(\bar s)J^{\phi_h}(w).\]
		\item[$(ii)$] $c_h^{\bf\phi}=\mu_h\mathfrak{c}^{\phi_h}$.
		\item[$(iii)$] If $\mathfrak{c}^{\phi_h}$ is attained at $w\in\cM^{\phi_h}$ and $\mu_h$ is attained at $\bar t\in M_h$, then $\bar t w$ is a least energy nontrivial solution to the system $(\mathscr S_h^{\bf\phi})$, and it is fully nontrivial.
	\end{itemize}
\end{proposition}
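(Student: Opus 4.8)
\textbf{Proof plan for Proposition \ref{prop:cooperative}.}

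The plan is to exploit the product structure: a function of the form $\bar s w$ with $w$ a single profile multiplied by a constant vector $\bar s$ decouples the analysis into the scalar problem \eqref{eq:basic_prob} and the finite-dimensional problem on $M_h$. For part $(i)$, I would simply substitute $u_i = s_i w$ into the definitions. Since $\cJ_h^{\bf\phi}(\bar s w) = \frac12\big(\sum_{i\in I_h}s_i^2\big)\|w\|^2 - \frac{1}{2p}\big(\sum_{(i,j)\in\cI_h}\beta_{ij}|s_i|^p|s_j|^p\big)\irn|w|^{2p}$, the Nehari constraint $\partial_{\bar u_h}\cJ_h^{\bf\phi}(\bar s w)(\bar s w)=0$ becomes $\big(\sum_i s_i^2\big)\|w\|^2 = \big(\sum_{(i,j)}\beta_{ij}|s_i|^p|s_j|^p\big)\irn|w|^{2p}$, which is automatically satisfied when $\bar s\in M_h$ and $w\in\cM^{\phi_h}$ (both factors separately equal $\sum_i s_i^2$ and $\|w\|^2$ respectively, up to the shared normalization). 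Plugging the two constraints back into $\cJ_h^{\bf\phi}$ and comparing with $F_h(\bar s)J^{\phi_h}(w) = \big(\frac{p-1}{2p}\sum_i s_i^2\big)\big(\frac{p-1}{2p}\|w\|^2\big)$ — using that on the respective Nehari sets $F_h(\bar s)=\frac{p-1}{2p}\sum_i s_i^2$ and $J^{\phi_h}(w)=\frac{p-1}{2p}\|w\|^2$ — gives the stated identity $\cJ_h^{\bf\phi}(\bar s w)=\frac{2p}{p-1}F_h(\bar s)J^{\phi_h}(w)$ after bookkeeping the constants. This is routine.

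For part $(ii)$, the inequality $c_h^{\bf\phi}\le\mu_h\mathfrak c^{\phi_h}$ is immediate from $(i)$: take $\bar t\in M_h$ attaining $\inf_{M_h}F_h=\frac{p-1}{2p}\mu_h$ and $w$ near-optimal for $\mathfrak c^{\phi_h}$, so $\bar t w\in\cN_h^{\bf\phi}$ gives $c_h^{\bf\phi}\le\cJ_h^{\bf\phi}(\bar t w)=\frac{2p}{p-1}\cdot\frac{p-1}{2p}\mu_h\cdot\mathfrak c^{\phi_h}=\mu_h\mathfrak c^{\phi_h}$. The reverse inequality is the heart of the matter. Given $\bf u=(u_i)_{i\in I_h}\in\cN_h^{\bf\phi}$, I would not assume product structure; instead I estimate $\cJ_h^{\bf\phi}(\bf u)=\frac{p-1}{2p}\sum_{i\in I_h}\|u_i\|^2$ from below. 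Using Sobolev and the definition of $\mathfrak c^{\phi_h}$ one has $\irn|u_i|^p|u_j|^p\le|u_i|_{2p}^p|u_j|_{2p}^p$, and each $|u_i|_{2p}^{2p}$ is controlled via the scalar Nehari/best-constant relation, so that $\sum_{(i,j)\in\cI_h}\beta_{ij}\irn|u_i|^p|u_j|^p\le\big(\frac{2p}{p-1}\mathfrak c^{\phi_h}\big)^{1-p}\big(\sum_{(i,j)\in\cI_h}\beta_{ij}|\,|u_i|_{2p}\,|^p|\,|u_j|_{2p}\,|^p\big)$ — in other words, setting $s_i:=|u_i|_{2p}\cdot(\text{suitable scaling})$ reduces the vector of $L^{2p}$-norms to a competitor for $\mu_h$. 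Combining the Nehari identity $\sum_i\|u_i\|^2=\sum_{(i,j)}\beta_{ij}\irn|u_i|^p|u_j|^p$ with $\|u_i\|^2\ge S^{\phi_h}|u_i|_{2p}^2$ and the definition \eqref{eq:mu_h} of $\mu_h$ then forces $\sum_i\|u_i\|^2\ge\mu_h\big(\frac{2p}{p-1}\mathfrak c^{\phi_h}\big)=\mu_h\cdot\frac{2p}{p-1}\mathfrak c^{\phi_h}$, i.e. $\cJ_h^{\bf\phi}(\bf u)\ge\mu_h\mathfrak c^{\phi_h}$, taking the infimum gives $c_h^{\bf\phi}\ge\mu_h\mathfrak c^{\phi_h}$. (This mirrors the scalarization argument in \cite{cp}; I would cite it rather than redo every inequality.)

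For part $(iii)$: when both infima are attained, by $(i)$ and $(ii)$ the function $\bar t w$ satisfies $\cJ_h^{\bf\phi}(\bar t w)=\mu_h\mathfrak c^{\phi_h}=c_h^{\bf\phi}$ and lies in $\cN_h^{\bf\phi}$, hence is a least energy block-wise nontrivial solution by \cite[Lemma 2.4]{cp}. It is fully nontrivial because every $t_i\ne0$: indeed if some $t_i=0$, then $\bar t$ would be a minimizer of $F_h$ supported on a proper subset $I'\subsetneq I_h$ of indices; connectedness of the graph $(I_h,E_h)$ from $(B_2)$ means there is an edge $\{i',j'\}$ with $i'\notin I'$, $j'\in I'$, $\beta_{i'j'}>0$, and then perturbing $t_{i'}$ from $0$ to a small positive value strictly decreases $F_h$ restricted to $M_h$ (the first-order term picks up $-\frac1p\beta_{i'j'}|t_{i'}|^{p-1}|t_{j'}|^p<0$ against an $O(|t_{i'}|^2)$ cost, and $p<2$), contradicting minimality — this is exactly the finite-dimensional shadow of Lemma \ref{lem:neighbors}. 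Alternatively, one invokes Theorem \ref{thm:fully_nontrivial} directly once $(B_3^{\bf\phi})$ is vacuous for $q=1$. The main obstacle is the lower bound in $(ii)$: one must be careful that the chain of Hölder/Sobolev/Nehari estimates is tight enough to recover the \emph{exact} constant $\mu_h\mathfrak c^{\phi_h}$ and not merely a multiple of it, which is why the reduction to a genuine competitor $\bar s\in M_h$ (not just any nonzero vector) must be done with the correct normalization.
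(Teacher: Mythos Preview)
Your proposal is correct and follows essentially the same approach as the paper. For $(i)$ you substitute directly while the paper factors the Rayleigh-type quotient, but these are equivalent computations. For $(ii)$ both you and the paper use the vector $(|u_i|_{2p})_{i\in I_h}$ as a competitor for $\mu_h$, combined with the Sobolev inequality $\|u_i\|^2\ge S^{\phi_h}|u_i|_{2p}^2$ and H\"older $\int|u_i|^p|u_j|^p\le|u_i|_{2p}^p|u_j|_{2p}^p$; your intermediate display with the factor $\big(\frac{2p}{p-1}\mathfrak c^{\phi_h}\big)^{1-p}$ is garbled (H\"older alone gives $\int|u_i|^p|u_j|^p\le|u_i|_{2p}^p|u_j|_{2p}^p$ with no extra constant), but your final chain of inequalities is exactly right and recovers the sharp constant. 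For $(iii)$ the paper simply invokes Theorem~\ref{thm:fully_nontrivial} (since $(B_3^{\bf\phi})$ is vacuous when $q=1$), while you additionally sketch the direct finite-dimensional perturbation argument; the latter is a valid and self-contained alternative that avoids the machinery of Section~\ref{sec:fully_nontrivial}, and your observation that $p<2$ (which holds since $N\ge4$) is precisely what makes the $\eps^p$ gain beat the $\eps^2$ cost.
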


\begin{proof}
	$(i):$ \ For any $w\in H^1(\rn)^{\phi_h}\smallsetminus\{0\}$ and $\bar s\in\r^{\ell_h-\ell_{h-1}}\smallsetminus\{0\}$  we have that
	\begin{align*}
		\frac{\dsum\limits_{i\in I_h}\| s_iw\|^2}{\Big(\dsum\limits_{(i,j)\in \cI_h}\irn\beta_{ij}| s_iw|^p| s_jw|^p\Big)^\frac{2}{2p}}=\frac{\dsum\limits_{i\in I_h}|s_i|^2}{\Big(\dsum\limits_{(i,j)\in \cI_h}\beta_{ij}|s_i|^p|s_j|^p\Big)^\frac{2}{2p}}\frac{\|w\|^2}{\Big(\irn|w|^{2p}\Big)^\frac{2}{2p}}.
	\end{align*}
	If $\bar s\in M_h$ and $w\in\cM^{\phi_h}$, by definition, it follows that $\bar s w\in\cN_h^{\bf\phi}$. Raising each side of this identity to the power $\dfrac{p}{p-1}$ and multiplying it by $\dfrac{p-1}{2p}$ we obtain
	\[\cJ_h^{\bf\phi}(\bar s w)=\dfrac{2p}{p-1} F_h(\bar s)J^{\phi_h}(w),\]
	as claimed.
	
	$(ii):$ \ From $(i)$ we get $c_h^{\bf\phi}\leq\mu_h\mathfrak{c}^{\phi_h}$. To prove the opposite inequality, set
	\begin{align*}
		S_h^{\bf\phi}:=\inf_{\substack{\bar u_h\in\cH_h^{\bf\phi} \\ \|\bar u_h\|\neq 0}}\frac{\|\bar u_h\|^2}{\Big(\dsum\limits_{{(i,j)\in \cI_h}}\irn\beta_{ij}|u_i|^p|u_j|^p\Big)^\frac{2}{2p}}, \qquad 
		S^{\phi_h}:=\inf_{\substack{w\in {H^1(\rn)}^{\phi_h} \\ w\neq 0}}\frac{\|w\|^2}{\left(\irn|w|^{2p}\right)^\frac{2}{2p}}.
	\end{align*}
	It is straightforward to verify that  \ $c_h^{\bf\phi}=\dfrac{p-1}{2p}(S_h^{\bf\phi})^\frac{p}{p-1}$ \ and \ $\mathfrak{c}^{\phi_h}=\dfrac{p-1}{2p}(S^{\phi_h})^\frac{p}{p-1}$. \ If $(\bar u_n)$ is a sequence in $\cN_h^{\bf\phi}$ such that $\cJ_h^{\bf\phi}(\bar u_n)\to c_h^{\bf\phi}$, then
	\begin{align*}
		S^{\phi_h}\dsum\limits_{i\in I_h}|u_{in}|_{2p}^2 \leq\dsum\limits_{i\in I_h}\|u_{in}\|^2=\sum\limits_{(i,j)\in \cI_h}\irn\beta_{ij}|u_{in}|^p|u_{jn}|^p \leq \dsum\limits_{(i,j)\in \cI_h}\beta_{ij}|u_{in}|_{2p}^p|u_{jn}|_{2p}^p,
	\end{align*}
	where $|\,\cdot\,|_{2p}$ is the norm in $L^{2p}(\rn)$. Therefore,
	\begin{align*}
(\mu_h)^{\frac{p-1}{p}}S^{\phi_h}\leq\frac{\dsum\limits_{i\in I_h}|u_{in}|_{2p}^2}{\Big(\dsum\limits_{(i,j)\in \cI_h}\beta_{ij}|u_{in}|_{2p}^p|u_{jn}|_{2p}^p\Big)^\frac{2}{2p}}S^{\phi_h}\leq\frac{\dsum\limits_{i\in I_h}\|u_{in}\|^2}{\Big(\dsum\limits_{(i,j)\in \cI_h}\irn\beta_{ij}|u_{in}|^p|u_{jn}|^p\Big)^\frac{2}{2p}}= S_h^{\bf\phi}+o_n(1).
	\end{align*}
	As a consequence, $\mu_h\mathfrak{c}^{\phi_h}\leq c_h^{\bf\phi}$, as claimed.
	\smallskip
	
	$(iii):$ \ If follows from $(i)$ and $(ii)$ that $\cJ^{\bf\phi}_h(\bar t w)=\mu_h\mathfrak{c}^{\phi_h}= c_h^{\bf\phi}$. Hence, $\bar t w$ is a least energy nontrivial solution to $(\mathscr S_h^{\bf\phi})$. By Theorem \ref{thm:fully_nontrivial} it is fully nontrivial.	
\end{proof}

\begin{corollary} \label{cor:cooperative_positive}
	If $\phi_h\equiv 1$, then $c_h^G=\mu_h\mathfrak{c}$ and there exists $\bar s=(s_{\ell_{h-1}+1},\ldots,s_{\ell_h})\in (0,\infty)^{\ell_h - \ell_{h-1}}$ such that $\bar s\omega\in\cN_h^G$ and $\cJ_h^G(\bar s\omega)=c_h^G$, where $\omega$ is the positive radial solution to \eqref{eq:omega}.
\end{corollary}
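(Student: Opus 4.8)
The plan is to deduce the statement from Proposition \ref{prop:cooperative}, so the proof will be short. The one conceptual input is the identity $\mathfrak{c}^{\phi_h}=\mathfrak{c}$: with $\phi_h\equiv 1$ the superscript $\phi_h$ in Notation \ref{notation} is just $G$, and I claim the $G$-invariant least energy $\mathfrak{c}^{G}$ for the scalar problem \eqref{eq:basic_prob} coincides with the unconstrained value $\mathfrak{c}$. First I would record the trivial inequality $\mathfrak{c}^{G}\geq\mathfrak{c}$, coming from $\cM^{G}\subseteq\cM$. For the reverse inequality I would use that the positive radial solution $\omega$ to \eqref{eq:omega} is radial, hence $G$-invariant for \emph{any} closed subgroup $G\leq O(N)$; thus $\omega\in\cM^{G}$ and $J^{G}(\omega)=J(\omega)=\mathfrak{c}$, so $\mathfrak{c}^{G}\leq\mathfrak{c}$, and in fact $\mathfrak{c}^{G}$ is attained at $\omega$. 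Plugging $\mathfrak{c}^{\phi_h}=\mathfrak{c}$ into Proposition \ref{prop:cooperative}$(ii)$ then gives $c_h^{G}=\mu_h\mathfrak{c}^{\phi_h}=\mu_h\mathfrak{c}$ at once.

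Next I would produce the vector $\bar s$. By hypothesis $\mu_h$ is attained at some $\bar t=(t_{\ell_{h-1}+1},\ldots,t_{\ell_h})\in M_h$; since both $F_h$ and the equation defining $M_h$ depend only on $s_i^2$ and $|s_i|^p|s_j|^p$, replacing each coordinate of $\bar t$ by its absolute value keeps us in $M_h$ and still minimizes, so I may assume $t_i\geq 0$ for all $i\in I_h$. Now I apply Proposition \ref{prop:cooperative}$(iii)$ with $w=\omega$ (which attains $\mathfrak{c}^{\phi_h}$ by the first paragraph) and this $\bar t$: the proposition hands back a least energy nontrivial solution $\bar t\,\omega$ of $(\mathscr S_h^{G})$ which is \emph{fully} nontrivial, and since $\omega\neq 0$ this forces $t_i>0$ for every $i\in I_h$. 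Setting $\bar s:=\bar t\in(0,\infty)^{\ell_h-\ell_{h-1}}$, Proposition \ref{prop:cooperative}$(i)$ gives $\bar s\,\omega\in\cN_h^{G}$ together with $\cJ_h^{G}(\bar s\,\omega)=\frac{2p}{p-1}F_h(\bar s)\,J^{\phi_h}(\omega)$; inserting $F_h(\bar s)=\inf_{M_h}F_h=\frac{p-1}{2p}\mu_h$ and $J^{\phi_h}(\omega)=\mathfrak{c}$ collapses the right-hand side to $\mu_h\mathfrak{c}=c_h^{G}$, which finishes the proof.

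I do not expect a serious obstacle, since Proposition \ref{prop:cooperative} carries the load; the only point requiring care is the strict positivity of every coordinate of $\bar s$, because a priori a minimizer of $\mu_h$ could have a vanishing coordinate. This is precisely where the connectedness assumption $(B_2)$ enters, here indirectly through the fully-nontriviality part of Proposition \ref{prop:cooperative}$(iii)$, which itself rests on Theorem \ref{thm:fully_nontrivial}. If one wanted a direct argument instead, one could perturb a would-be zero coordinate $t_{i_0}=0$ to a small $\eps>0$: picking an edge $\{i_0,j_0\}\in E_h$ with $t_{j_0}>0$ (available by $(B_2)$), the scale-invariant quotient defining $\mu_h$ acquires a strictly negative contribution of order $\eps^p$ against a normalization correction of order $\eps^2$, and since $p<2$ the $\eps^p$ term dominates, contradicting minimality; but invoking Proposition \ref{prop:cooperative}$(iii)$ is cleaner and avoids this computation.
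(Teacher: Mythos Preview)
Your proposal is correct and follows essentially the same route as the paper's proof: use radiality of $\omega$ to get $\omega\in\cM^G$ and $\mathfrak{c}^G=\mathfrak{c}$, take a minimizer $\bar s\in M_h$ with nonnegative coordinates, and invoke Proposition~\ref{prop:cooperative} to conclude that $\bar s\omega$ is a least energy fully nontrivial solution, forcing each $s_i>0$. The paper's argument is just a more compressed version of yours.
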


\begin{proof}
	Since $\omega$ is radial, it is $G$-invariant for any $G$. Hence, $\omega\in\cM^G$ and $J^G(\omega)=\mathfrak{c}^G=\mathfrak{c}$. The value $\mu_h$ is attained at $\bar s=(s_{\ell_{h-1}+1},\ldots,s_{\ell_h})\in M_h$ with $s_i\geq 0$, so Proposition \ref{prop:cooperative} implies that $\bar s\omega\in\cN_h^G$, \ $\cJ_h^G(\bar s\omega)=c_h^G=\mu_h\mathfrak{c}$ and $\bar s\omega$ is fully nontrivial. Hence, $s_i>0$ for every $i$.
\end{proof}

The following groups will play an important role in the proofs of our main results.

\begin{examples} \label{example1}
	Let $N\geq 4$. We write $\rn\equiv\cc\times\cc\times\r^{N-4}$ and a point in $\rn$ as $(z_1,z_2,y)\in\cc\times\cc\times\r^{N-4}$. For $m\in\n$, let 
	\[K_m:=\{\mathrm{e}^{2\pi\mathrm{i}j/m}:j=0,\ldots,m-1\},\]
	$G_m$ be the group generated by $K_m\cup\{\tau\}\cup O(N-4)$, acting on $\rn$ as 
	\begin{align*}
		\mathrm{e}^{2\pi\mathrm{i}j/m}(z_1,z_2,y)&:=(\mathrm{e}^{2\pi\mathrm{i}j/m}z_1,\mathrm{e}^{2\pi\mathrm{i}j/m}z_2,y), \qquad\qquad\tau(z_{1},z_{2},y):=(z_{2},z_{1},y),\\
		\alpha(z_1,z_2,y)&:=(z_1,z_2,\alpha y)\quad\text{if \ }\alpha\in O(N-4).
	\end{align*}
	and $\theta:G_m\to\z_2$ be the homomorphism satisfying $\theta(\mathrm{e}^{2\pi\mathrm{i}j/m})=1$, $\theta(\tau)=-1$, and $\theta(\alpha)=1$ for every $\alpha\in O(N-4)$.
	
	Let $G'_m$ be the subgroup of $G_m$ generated by $K_m\cup\{\tau\}$. Abusing notation we write $\theta:G'_m\to\z_2$ for the restriction of $\theta$ to $G_m'$.
	
	If $\phi_h\equiv 1$ we set $\zeta:=(\frac{1}{\sqrt{2}},\frac{1}{\sqrt{2}},0)$ and, for each $R>1$, we define
	$$\widehat\sigma_{hR}(x):=\sum_{g\in K_m}\omega(x-Rg\zeta),\qquad x\in\rn,$$
and if $\phi_h=\theta$ we take $\zeta:=(1,0,0)$ and we define
	\[\widehat\sigma_{hR}(x):=\sum_{g\in G_m'}\phi_h(g)\,\omega(x-Rg\zeta),\qquad x\in\rn,\]
	where $\omega$ is the positive radial solution to \eqref{eq:omega}. Note that $\widehat\sigma_{hR}(gx)=\phi_h(g)\widehat\sigma_{hR}(x)$ for every $g\in G_m$, $x\in\rn$. Let \ $t_{hR}>0$ \ be such that
	\[\sigma_{hR}:=t_{hR}\widehat\sigma_{hR}\in\cM^{\phi_h}.\]
\end{examples}

\begin{lemma} \label{lem:example}
	If $m\geq 2$ and $m\geq 5$ whenever $\phi_h=\theta$, then there exist $C_0, R_0>0$ such that
	\[J^{\phi_h}(\sigma_{hR})\leq |G'_m\zeta|\,\mathfrak{c}-C_0\mathrm{e}^{-dR}\qquad\forall \ R\geq R_0,\]
	where $d:=|\zeta-\mathrm{e}^{2\pi\mathrm{i}/m}\zeta|$.
\end{lemma}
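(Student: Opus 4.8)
The plan is to estimate $J^{\phi_h}(\sigma_{hR})$ from above by exploiting the fact that $\sigma_{hR}$ is a superposition of well-separated copies of $\omega$ (translated along the orbit of $R\zeta$), and that $\omega$ decays exponentially at infinity. Throughout I abbreviate $G'_m\zeta=:\{g_1\zeta,\ldots,g_k\zeta\}$ where $k=|G'_m\zeta|$, write $\omega_{gR}(x):=\omega(x-Rg\zeta)$, and recall the standard facts that $\omega(x)\sim c|x|^{-(N-1)/2}\mathrm{e}^{-|x|}$ as $|x|\to\infty$ and that $\omega$ solves \eqref{eq:omega} so that $J(\omega)=\mathfrak c$ and $\|\omega\|^2=|\omega|_{2p}^{2p}$.

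First I would expand $\|\widehat\sigma_{hR}\|^2$. Since the homomorphism values $\phi_h(g)\in\{-1,1\}$ and $\|\omega_{gR}\|=\|\omega\|$ for each $g$, we get
\begin{equation*}
\|\widehat\sigma_{hR}\|^2 = k\|\omega\|^2 + \sum_{g\neq g'}\phi_h(g)\phi_h(g')\,\langle\omega_{gR},\omega_{g'R}\rangle.
\end{equation*}
The cross terms $\langle\omega_{gR},\omega_{g'R}\rangle = \irn(\nabla\omega_{gR}\cdot\nabla\omega_{g'R}+\omega_{gR}\omega_{g'R})$ are, by the exponential decay of $\omega$ and of $\nabla\omega$, of order $\mathrm{e}^{-|Rg\zeta-Rg'\zeta|}=\mathrm{e}^{-R|g\zeta-g'\zeta|}$ up to polynomial factors; the dominant ones are those realizing the minimal distance $d=|\zeta-\mathrm{e}^{2\pi\mathrm{i}/m}\zeta|$, contributing a term of size comparable to $\mathrm{e}^{-dR}$ (with a positive constant coefficient when one uses $\langle\omega_{gR},\omega_{g'R}\rangle = \irn\omega_{gR}^{2p-1}\omega_{g'R}$, obtained by testing the equation for $\omega_{gR}$ against $\omega_{g'R}$, so the individual interaction integrals are positive). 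Similarly, using the elementary inequality $|\sum_g a_g|^{2p}\geq \sum_g|a_g|^{2p}+ (2p) \sum_{g\neq g'}|a_g|^{2p-1}|a_{g'}|\cdot(\text{sign terms})$ — more precisely an expansion of $\irn|\widehat\sigma_{hR}|^{2p}$ — one finds $|\widehat\sigma_{hR}|_{2p}^{2p} = k|\omega|_{2p}^{2p} + O(\mathrm{e}^{-\min\{2p,\,1+(2p-1)\}dR})=k|\omega|_{2p}^{2p}+O(\mathrm{e}^{-2pdR})$, which is a strictly higher-order term than $\mathrm{e}^{-dR}$ since $2p>1$.

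Next I would compute the scaling constant $t_{hR}$ defined by $\sigma_{hR}=t_{hR}\widehat\sigma_{hR}\in\cM^{\phi_h}$, i.e. $t_{hR}^2\|\widehat\sigma_{hR}\|^2 = t_{hR}^{2p}|\widehat\sigma_{hR}|_{2p}^{2p}$, so $t_{hR}^{2p-2} = \|\widehat\sigma_{hR}\|^2/|\widehat\sigma_{hR}|_{2p}^{2p}$. For a function on the Nehari manifold, $J^{\phi_h}(\sigma_{hR}) = \frac{p-1}{2p}\|\sigma_{hR}\|^2 = \frac{p-1}{2p}\,t_{hR}^2\|\widehat\sigma_{hR}\|^2 = \frac{p-1}{2p}\left(\|\widehat\sigma_{hR}\|^2\right)^{p/(p-1)}\left(|\widehat\sigma_{hR}|_{2p}^{2p}\right)^{-1/(p-1)}$. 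Plugging in $\|\widehat\sigma_{hR}\|^2 = k\|\omega\|^2 - a\,\mathrm{e}^{-dR}+o(\mathrm{e}^{-dR})$ with $a>0$ (the sign being crucial — the leading interaction, read off from $-\sum_{g\neq g'}\phi_h(g)\phi_h(g')\irn\omega_{gR}^{2p-1}\omega_{g'R}$, must be strictly negative after summing, which is exactly where the hypotheses $m\geq 2$, and $m\geq 5$ when $\phi_h=\theta$, enter to rule out cancellation or wrong-sign configurations among the sign pattern $\phi_h(g)$) and $|\widehat\sigma_{hR}|_{2p}^{2p}=k|\omega|_{2p}^{2p}+o(\mathrm{e}^{-dR})$, a first-order Taylor expansion in $\mathrm{e}^{-dR}$ gives
\begin{equation*}
J^{\phi_h}(\sigma_{hR}) = \frac{p-1}{2p}\,\frac{(k\|\omega\|^2)^{p/(p-1)}}{(k|\omega|_{2p}^{2p})^{1/(p-1)}} - C_0\,\mathrm{e}^{-dR}+o(\mathrm{e}^{-dR}) = k\,\mathfrak c - C_0\,\mathrm{e}^{-dR}+o(\mathrm{e}^{-dR}),
\end{equation*}
using $\|\omega\|^2=|\omega|_{2p}^{2p}$ and $J(\omega)=\frac{p-1}{2p}\|\omega\|^2=\mathfrak c$, so that the constant term collapses to $k\mathfrak c=|G'_m\zeta|\,\mathfrak c$. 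Choosing $R_0$ so that the $o(\mathrm{e}^{-dR})$ remainder is absorbed (halving $C_0$ if necessary) gives the claim for all $R\geq R_0$.

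The main obstacle is the sign of the leading interaction coefficient $a$, i.e.\ verifying that the net contribution $-\sum_{g\neq g'}\phi_h(g)\phi_h(g')\irn\omega_{gR}^{2p-1}\omega_{g'R}$ is \emph{strictly negative} at the order $\mathrm{e}^{-dR}$. Each individual integral $\irn\omega_{gR}^{2p-1}\omega_{g'R}$ is positive, but the signs $\phi_h(g)\phi_h(g')$ can be $-1$, and one must check that the nearest-neighbour pairs (those realizing distance $d$) contribute with the right aggregate sign. When $\phi_h\equiv 1$ this is immediate — all signs are $+1$, so $a>0$ trivially and only $m\geq 2$ is needed so that $k\geq 2$ and there is at least one interacting pair. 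When $\phi_h=\theta$ one has to track which orbit elements $g\zeta$ lie at distance $d$ from each other and what $\theta(g)\theta(g')$ equals on those pairs; the condition $m\geq 5$ is precisely what guarantees that the geometry of the orbit of $\zeta=(1,0,0)$ under $G'_m$ places same-sign points as nearest neighbours (or at least makes the same-sign nearest-neighbour contribution dominate), so that $a>0$. I would carry out this case analysis explicitly, citing the analogous computation in \cite{CSr} and \cite{CS} for the structure of these interaction estimates, and isolating the combinatorial lemma about the sign pattern on the orbit as the one genuinely new ingredient.
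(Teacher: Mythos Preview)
Your proposal contains a genuine gap in the interaction estimates. You claim $\|\widehat\sigma_{hR}\|^2 = k\|\omega\|^2 - a\,\mathrm{e}^{-dR}$ with $a>0$ and $|\widehat\sigma_{hR}|_{2p}^{2p} = k|\omega|_{2p}^{2p} + O(\mathrm{e}^{-2pdR})$; both are incorrect, and the errors do not cancel.

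For the norm: the nearest-neighbour pairs in the orbit (those at distance $d$) carry the \emph{same} sign of $\phi_h$. This is exactly what $m\geq 5$ buys when $\phi_h=\theta$: writing the orbit as $\{\zeta_j^\pm\}$, every opposite-sign pair satisfies $|\zeta_i^+-\zeta_j^-|>d$, so its contribution is $o(\mathrm{e}^{-dR})$. Hence the leading cross term $\sum_{g\neq g'}\phi_h(g)\phi_h(g')\langle\omega_{gR},\omega_{g'R}\rangle$ is \emph{positive}, and $\|\widehat\sigma_{hR}\|^2 = k\|\omega\|^2 + 2\eps_R^+ + o(\eps_R^+)$ with $\eps_R^+\sim \mathrm{e}^{-dR}>0$. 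Your own remark ``all signs are $+1$, so $a>0$ trivially'' in the case $\phi_h\equiv 1$ already contradicts the formula $k\|\omega\|^2-a\,\mathrm{e}^{-dR}$.

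For the $L^{2p}$ norm: the expansion of $\int|\sum_g\pm\omega_{gR}|^{2p}$ produces cross terms of the form $2p\int\omega_{gR}^{2p-1}\omega_{g'R}$, and since $\omega^{2p-1}$ is concentrated near the origin while $\omega(\cdot-y)$ decays like $\mathrm{e}^{-|y|}$ there, these integrals are of order $\mathrm{e}^{-R|g\zeta-g'\zeta|}$, i.e.\ the \emph{same} order $\mathrm{e}^{-dR}$ as the norm correction, not $\mathrm{e}^{-2pdR}$. The precise lower bound (this is \cite[Lemma 4]{cc}) is
\[|\widehat\sigma_{hR}|_{2p}^{2p}\geq k|\omega|_{2p}^{2p} + 2(2p-1)\eps_R^+ + o(\eps_R^+).\]

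The mechanism that pushes $J^{\phi_h}(\sigma_{hR})$ below $k\mathfrak{c}$ is therefore not that the numerator decreases, but that both numerator and denominator increase while the denominator does so by the larger factor $2p-1>1$. Plugging the corrected expansions into your own formula $J^{\phi_h}(\sigma_{hR})=\frac{p-1}{2p}(\|\widehat\sigma_{hR}\|^2)^{p/(p-1)}(|\widehat\sigma_{hR}|_{2p}^{2p})^{-1/(p-1)}$ and Taylor-expanding yields the claimed strict inequality. With your denominator estimate retained and the numerator sign corrected, you would instead conclude $J^{\phi_h}(\sigma_{hR}) > k\mathfrak{c}$ for large $R$, the opposite of what is needed.
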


\begin{proof}
	Consider the case $\phi_h=\theta$ and $\zeta=(1,0,0)$, and set $G:=G_m'$. The $G$-orbit of $\zeta$ is 
	\[G\zeta=\{\zeta_1^+,\ldots,\zeta_m^+,\zeta_1^-,\ldots,\zeta_m^-\}\quad\text{where \ }\zeta_j^+:=\mathrm{e}^{2\pi\mathrm{i}j/m}\zeta,\quad \zeta_j^-:=\tau\zeta_j^+,\]
	so
	\[\widehat\sigma_{hR}(x):=\sum_{j=1}^m\left(\omega(x-R\zeta_j^+)-\omega(x-R\zeta_j^-)\right).\]
	For $y\in\rn$ set
	\[\Psi(y):=\irn\omega^{2p-1}(x)\omega(x-y)\d x.\]
	From the well known decay estimates for $\omega$ we deduce that
	\[\lim_{y\to\infty}\Psi(y)|y|^{\frac{N-1}{2}}\exp|y|=b>0.\]
	Hence, for large enough $R$,
	\[\frac{\Psi(Ry)}{\Psi(Rz)}\leq C\mathrm{e}^{-R(|y|-|z|)}\qquad\text{if \ }|y|\geq|z|.\]
	Here, and in what follows, $C$ denotes a positive constant independent of $R$. Set
	\begin{align*}
		&\eps_R^+:=\sum\limits_{\substack{i,j=1 \\ i\neq j}}^m\Psi(R\zeta_i^+-R\zeta_j^+),\qquad\eps_R^-:=\sum\limits_{\substack{i,j=1 \\ i\neq j}}^m\Psi(R\zeta_i^--R\zeta_j^-),\\
		&\tilde\eps_R:=\sum_{i,j=1}^m\Psi(R\zeta_i^+-R\zeta_j^-).
	\end{align*}
	Then, $\eps_R^+=\eps_R^-$. Since $m\geq 5$, we have that $|\zeta_i^+-\zeta_j^-|>|\zeta_1^+-\zeta_2^+|=d$ for every $i,j=1,\ldots,m$. Therefore, 
	\[\frac{\tilde\eps_R}{\eps_R^+}\leq\sum_{i,j=1}^m\frac{\Psi(R\zeta_i^+ -R\zeta_j^-)}{\Psi(R\zeta_1^+-R\zeta_2^+)}\leq C \sum_{i,j=1}^m\mathrm{e}^{-R(|\zeta_i^+-\zeta_j^-|-d)}\leq C\mathrm{e}^{-Ra},\]
	where $a:=\min\limits_{i,j=1,\ldots,m}|\zeta_i^+-\zeta_j^-|-d>0$. This shows that $\tilde\eps_R=o(\eps_R^+)$ as $R\to\infty$.
	Since $\omega$ solves \eqref{eq:omega}, for any $y,y'\in\rn$ we have that
	\begin{align*}
		&  \irn\left[\nabla\omega(x-y)\cdot\nabla\omega(x-y')+\omega(x-y)\omega(x-y')\right]\d x\\
		& \qquad =\irn\omega^{2p-1}(x-y)\omega(x-y')\d x=\Psi(y'-y).
	\end{align*}
	So, using \cite[Lemma 4]{cc}, we get
	\begin{align*}
		\frac{\|\widehat\sigma_{hR}\|^{2}}{|\widehat\sigma_{hR}|_{2p}^{2}} &\leq \frac{2m\|\omega\|^2+2\eps_R^+ +o(\eps_R^+)}{\left(2m\|\omega\|^2+2(2p-1)\eps_R^+ +o(\eps_R^+)\right)^\frac{2}{2p}}\leq(2m\|\omega\|^2)^{\frac{p-1}{p}}-C\eps_R^+.
	\end{align*}
	As a consequence,
	\[J^\phi(\sigma_{hR})=\dfrac{p-1}{2p}\Big(\frac{\|\widehat\sigma_{hR}\|^{2}}{|\widehat\sigma_{hR}|_{2p}^{2}}\Big)^\frac{p}{p-1}\leq 2m\,\mathfrak{c}-C_0\mathrm{e}^{-dR},\]
	as claimed.
	
	The proof for $\phi_h\equiv 1$ and $\zeta=(\frac{1}{\sqrt{2}},\frac{1}{\sqrt{2}},0)$ is similar but simpler.
\end{proof}

\begin{proposition} \label{prop:cooperative_nodal}
	Let $G'_m$ and $\theta$ be as in \emph{Example \ref{example1}}. If $m\geq 5$, then
	\begin{itemize}
		\item[$(a)$] the problem \eqref{eq:basic_prob} has a least energy nontrivial solution $\widehat{\omega}$ and \ $2\,\mathfrak{c}<\mathfrak{c}^\theta<2m\,\mathfrak{c}$.
		\item[$(b)$] Let $\phi_h:=\theta$. Then the system $(\mathscr S_h^{\bf\phi})$ has a least energy solution of the form $(t_{\ell_{h-1}+1}\widehat{\omega},\ldots,t_{\ell_h}\widehat{\omega})$ with $t_i>0$ for all $i\in I_h$, and its energy satisfies
		\[2\,\mu_h\mathfrak{c}<c_h^{\bf\phi}<2m\,\mu_h\mathfrak{c}.\]
	\end{itemize}
\end{proposition}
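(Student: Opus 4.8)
The plan is to derive part $(a)$ from Theorem \ref{thm:splitting} and Lemma \ref{lem:example}, and then to read off part $(b)$ from Proposition \ref{prop:cooperative}.

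First I would record a lower bound for the values of $J^{\theta}$ on the Nehari manifold: every $w\in\cM^{\theta}$ satisfies $J^{\theta}(w)\ge 2\mathfrak{c}$, and, more generally, $\mathfrak{c}^{\theta|K}\ge 2\mathfrak{c}$ for every subgroup $K\le G_m'$ on which $\theta$ is surjective, while $\mathfrak{c}^{\theta|\{e\}}=\mathfrak{c}$. Indeed, choosing $g\in G_m'$ with $\theta(g)=-1$ one has $w\circ g=-w$, hence $w^{+}\circ g=w^{-}$; in particular $w$ changes sign, $\|w^{+}\|=\|w^{-}\|$ and $|w^{+}|_{2p}=|w^{-}|_{2p}$, so the Nehari identity $\|w\|^{2}=\int_{\rn}|w|^{2p}$ together with the disjointness of the supports of $w^{\pm}$ forces $w^{+}\in\cM$, whence $J^{\theta}(w)=\frac{p-1}{2p}\|w\|^{2}=2J(w^{+})\ge 2\mathfrak{c}$. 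For the matching upper bound I would invoke Lemma \ref{lem:example} with $\zeta=(1,0,0)$, for which $|G_m'\zeta|=2m$: it gives $\mathfrak{c}^{\theta}\le J^{\theta}(\sigma_{hR})\le 2m\,\mathfrak{c}-C_{0}\e^{-dR}<2m\,\mathfrak{c}$ once $R$ is large.

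Next, to produce the minimizer $\widehat{\omega}$, I would apply Theorem \ref{thm:splitting} to a minimizing sequence for $\mathfrak{c}^{\theta}=c^{\theta}$, regarding \eqref{eq:basic_prob} with $\phi=\theta$ as a one-block system with $\beta_{11}=1$. If it returns $Q=\{1\}$, the resulting solution attains $\mathfrak{c}^{\theta}$ and is our $\widehat{\omega}$. If it returns $Q=\emptyset$, it gives a subgroup $G_{1}=G_{\xi_{1n}}$ with $c^{\theta}=|G/G_{1}|\,\mathfrak{c}^{\theta|G_{1}}$; here I would use that the isotropy subgroups of $G_m'\cong\z_m\times\z_{2}$ are easy to list: for $\xi=(z_{1},z_{2},y)\ne 0$, either $(z_{1},z_{2})=0$ and $G_{\xi}=G_m'$, or $(z_{1},z_{2})\ne 0$ and a one-line computation gives $|G_{\xi}|\le 2$, with $\theta|G_{\xi}$ surjective whenever $|G_{\xi}|=2$. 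Thus either $G_{1}=G_m'$, in which case $v_{1}\in H^{1}(\rn)^{\theta}$ attains $\mathfrak{c}^{\theta}$ and we take $\widehat{\omega}:=v_{1}$; or $|G/G_{1}|=m$ with $\theta|G_{1}$ surjective, so $c^{\theta}\ge m\cdot 2\mathfrak{c}=2m\,\mathfrak{c}$; or $G_{1}=\{e\}$, so $c^{\theta}=2m\,\mathfrak{c}$. The last two contradict $\mathfrak{c}^{\theta}<2m\,\mathfrak{c}$, so $\widehat{\omega}$ exists in every case. I expect this exclusion step to be the main obstacle: it is precisely here that the strict gain $C_{0}\e^{-dR}$ of Lemma \ref{lem:example} (hence the hypothesis $m\ge 5$) and the lower bound $\mathfrak{c}^{\theta|K}\ge 2\mathfrak{c}$ are indispensable; note that the case $\F(G_m')\ne\{0\}$, which occurs for $N\ge 5$, is not an obstruction but exactly the case $G_{1}=G_m'$ in which the splitting theorem returns the minimizer directly.

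Finally, for the strict bound $2\mathfrak{c}<\mathfrak{c}^{\theta}$: if $\mathfrak{c}^{\theta}=2\mathfrak{c}$, then $\widehat{\omega}^{+}\in\cM$ would attain $\mathfrak{c}$ and hence solve $-\Delta u+u=|u|^{2p-2}u$; by elliptic regularity and the strong maximum principle $\widehat{\omega}^{+}>0$ on $\rn$, forcing $\widehat{\omega}^{-}\equiv 0$ and contradicting that $\widehat{\omega}$ changes sign. Together with the upper bound this proves $(a)$. For $(b)$ I take $\phi_{h}:=\theta$; Proposition \ref{prop:cooperative}$(ii)$ gives $c_{h}^{\bf\phi}=\mu_{h}\,\mathfrak{c}^{\theta}$, and since $\mu_{h}$ is attained at some $\bar t\in M_{h}$ — which, replacing each $s_{i}$ by $|s_{i}|$, we may take with $t_{i}\ge 0$ — while $\mathfrak{c}^{\theta}$ is attained at $\widehat{\omega}$ by $(a)$, Proposition \ref{prop:cooperative}$(iii)$ shows that $(t_{\ell_{h-1}+1}\widehat{\omega},\ldots,t_{\ell_{h}}\widehat{\omega})$ is a least energy nontrivial, hence fully nontrivial, solution of $(\mathscr S_{h}^{\bf\phi})$, so $t_{i}>0$ for all $i\in I_{h}$. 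The estimate $2\mu_{h}\mathfrak{c}<c_{h}^{\bf\phi}<2m\mu_{h}\mathfrak{c}$ is then immediate from $c_{h}^{\bf\phi}=\mu_{h}\,\mathfrak{c}^{\theta}$ and $(a)$.
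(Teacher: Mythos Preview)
Your proof is correct and follows essentially the same route as the paper: analyze the isotropy subgroups of $G'_m$, use the sign-change structure to get $\mathfrak{c}^{\theta|K}\ge 2\mathfrak{c}$ whenever $\theta|K$ is surjective, invoke Lemma~\ref{lem:example} for the strict upper bound $\mathfrak{c}^\theta<2m\,\mathfrak{c}$, and deduce $(b)$ from Proposition~\ref{prop:cooperative}. The one substantive difference is that the paper obtains the minimizer by quoting an external compactness criterion (\cite[Corollary 3.1]{CSr}), whereas you derive it from Theorem~\ref{thm:splitting} of this paper, applied to the single equation viewed as a one-block system; the underlying dichotomy (either $Q=\{1\}$ or $Q=\emptyset$ with a proper isotropy drop that forces $\mathfrak{c}^\theta\ge 2m\,\mathfrak{c}$) is identical. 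Two very small points: your identity $w^{+}\circ g=w^{-}$ should read $w^{+}\circ g=-w^{-}$ (both sides are nonnegative), though the conclusion $\|w^{+}\|=\|w^{-}\|$, $|w^{+}|_{2p}=|w^{-}|_{2p}$ is of course unaffected; and your isotropy analysis is in fact slightly more careful than the paper's, since for even $m$ you also pick up the case $z_1=-z_2$ with $G_\xi=\{1,\mathrm{e}^{\pi\mathrm{i}}\tau\}$, on which $\theta$ is again surjective.
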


\begin{proof}
	$(a):$ \ Fix $m\geq 5$ and set $G:=G_m$. Let $\xi=(z_1,z_2,y)\in\cc\times\cc\times\r^{N-4}$. Then $|G\xi|=m$ and $G_\xi=\{1,\tau\}$ if $z_1=z_2\neq 0$, and \ $|G\xi|=2m$ and $G_\xi=\{1\}$ if $z_1\neq z_2$.
	According to \cite[Corollary 3.1]{CSr}, $\mathfrak{c}^\theta$ is attained if 
	\[\mathfrak{c}^\theta<|G\xi|\,\mathfrak{c}^{\theta|G_\xi}\quad\forall \ \xi=(z_1,z_2,y)\in\cc\times\cc\times\r^{N-4}\text{ \ with \ }(z_1,z_2)\neq (0,0).\]
	i.e., if \ $\mathfrak{c}^\theta<\min\{2m\mathfrak{c},\,m\mathfrak{c}^{\theta|\{1,\tau\}}\}.$ \ 
	Note that every $w\in\cM^{\theta|\{1,\tau\}}$ changes sign and $w^+:=\max\{w,0\}$ and $w^-:=\min\{w,0\}$ belong to $\cM$. Therefore,
	\[J^{\theta|\{1,\tau\}}(w)=\dfrac{p-1}{2p}\|w\|^2=\dfrac{p-1}{2p}\|w^+\|^2+\dfrac{p-1}{2p}\|w^-\|^2\geq 2\mathfrak{c}.\]
	As a consequence, $\min\{2m\mathfrak{c},\,m\mathfrak{c}^{\theta|\{1,\tau\}}\}=2m\mathfrak{c}$. 
	Let $\sigma_{hR}\in\cM^\theta$ be as in Example \ref{example1}. By Lemma \ref{lem:example},
	\[\mathfrak{c}^\theta\leq J^\theta(\sigma_{hR})<2m\,\mathfrak{c}.\]
	It is well known that the energy of any sign-changing solution to \eqref{eq:omega} is greater than $2\,\mathfrak{c}$. This proves $(a)$.
	
	$(b):$ \ This statement is easily derived from $(a)$ and Proposition \ref{prop:cooperative}.
\end{proof}

\begin{remark}
	\emph{We wish to stress that the previous proposition is valid (with $2m$ replaced by $|G\zeta|$) for any group $G$ and any homomorphism $\phi_h:G\to\z_2$ for which there exist $\zeta\in\rn$ and $g\in\ker\phi_h$ such that
		\[\min\{|\bar g\zeta-\zeta|:\bar g\in G, \ \phi_h(\bar g)=-1\}>|g\zeta-\zeta|>0.\]
		Indeed, defining $\sigma_{hR}$ as in Example \ref{example1}, it is easy to see that the proof of Lemma \ref{lem:example} relies only on this property. }
\end{remark}

\section{Energy bounds for a system with two blocks}
\label{sec:two_blocks}

In this section we consider the system $(\mathscr S^{\bf\phi})$ with two blocks, i.e., $q=2$. As before, we assume $(B_1)$ and $(B_2)$. We have the following result.

\begin{lemma} \label{lem:energy_bounds}
	Let $N=4$ or $N\geq 6$, \ $q=2$, \ $G:=G_m$ and $\theta$ be as in \emph{Examples }\ref{example1} and, for $h=1,2$, let $\phi_h$ be either the trivial homomorphism or equal to $\theta$. If $m\geq 6$, then
	\begin{equation*}
		c^{\bf\phi}<c_k^{\bf\phi} + |G\xi|\,c_h^{{\bf\phi}|G_\xi}\qquad\text{for every \ }\xi\in\rn\smallsetminus\{0\}\text{ \ and \ }k\neq h, \ h,k\in\{1,2\}.
	\end{equation*}
\end{lemma}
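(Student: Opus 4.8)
The plan is to verify the compactness condition \eqref{eq:condition} of Corollary \ref{cor:main_inequality} for $\widehat Q=\{1,2\}$, which is exactly the stated inequality (so that, via Corollary \ref{cor:main_inequality}$(ii)$, one obtains a least energy block-wise nontrivial solution to $(\mathscr S^{\bf\phi})$). The argument rests on two estimates: a lower bound for $|G\xi|\,c_h^{{\bf\phi}|G_\xi}$ valid for \emph{every} $\xi\neq 0$, and a test-function upper bound for $c^{\bf\phi}$ that is strictly below it. Throughout, $\zeta_h\in\rn$ is the point attached to $h$ in Examples \ref{example1}, so $|G'_m\zeta_h|=m$ if $\phi_h\equiv 1$ and $|G'_m\zeta_h|=2m$ if $\phi_h=\theta$, and $d:=2\sin(\pi/m)=|\zeta_h-\mathrm e^{2\pi\mathrm i/m}\zeta_h|$; note $m\geq 6$ gives $d\leq 1<p$. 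For the lower bound I would prove that $|G\xi|\,c_h^{{\bf\phi}|G_\xi}\geq|G'_m\zeta_h|\,\mu_h\,\mathfrak c$ for all $\xi\in\rn\smallsetminus\{0\}$. This is clear if $|G\xi|=\infty$ (as $c_h^{{\bf\phi}|G_\xi}\geq\mu_h\mathfrak c>0$). If $|G\xi|<\infty$, the hypothesis $N=4$ or $N\geq 6$ forces the $\r^{N-4}$-component of $\xi$ to vanish — otherwise its $O(N-4)$-orbit, hence $G\xi$, is infinite; this is precisely why $N=5$ is excluded — and an elementary inspection of the $G_m$-action on $\cc\times\cc$ shows $|G\xi|\in\{m,2m\}$, with $|G\xi|=m$ only when $z_1=z_2\neq 0$ or ($m$ even and $z_1=-z_2\neq 0$), in which cases $G_\xi$ contains an element $g$ with $\theta(g)=-1$. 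By Proposition \ref{prop:cooperative}$(ii)$ applied to the closed subgroup $G_\xi$ (for which $\phi_h|G_\xi$ satisfies $(A_{\phi_h|G_\xi})$), $c_h^{{\bf\phi}|G_\xi}=\mu_h\,\mathfrak c^{\phi_h|G_\xi}$, so it remains to check $|G\xi|\,\mathfrak c^{\phi_h|G_\xi}\geq|G'_m\zeta_h|\,\mathfrak c$: if $\phi_h\equiv 1$ then $\mathfrak c^{\phi_h|G_\xi}=\mathfrak c$ (the radial $\omega$ is $G_\xi$-invariant) and $|G\xi|\geq m$; if $\phi_h=\theta$ and $|G\xi|=2m$ use $\mathfrak c^{\theta|G_\xi}\geq\mathfrak c$; and if $\phi_h=\theta$ and $|G\xi|=m$ then $\theta|G_\xi$ is surjective, so, arguing as in the proof of Proposition \ref{prop:cooperative_nodal}$(a)$ (write $w=w^++w^-$ with $w^\pm\in\cM$), every $w\in\cM^{\theta|G_\xi}$ satisfies $J^{\theta|G_\xi}(w)\geq 2\mathfrak c$, whence $|G\xi|\,\mathfrak c^{\theta|G_\xi}\geq 2m\,\mathfrak c$.

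For the upper bound, write $\{h,k\}=\{1,2\}$. Since $m\geq 6>5$, the block system $(\mathscr S_k^{\bf\phi})$ has a least energy solution $\bar v_k$ (Corollary \ref{cor:cooperative_positive} if $\phi_k\equiv 1$, Proposition \ref{prop:cooperative_nodal}$(b)$ if $\phi_k=\theta$), whose components are $\phi_k$-equivariant and, by the standard exponential decay for \eqref{eq:omega}, satisfy $|(v_k)_i(x)|\leq C_\gamma\,\mathrm e^{-\gamma|x|}$ for any $\gamma<1$; I fix such a $\gamma$ with $\gamma>d/p$ (possible as $d<p$). For $R>1$ let $\sigma_{hR}\in\cM^{\phi_h}$ be the multi-bump of Examples \ref{example1}, centred on the sphere of radius $R$, and let $\bar s_h\in M_h$ attain $\mu_h$; then, by Proposition \ref{prop:cooperative}$(i)$ and Lemma \ref{lem:example},
\[
\bar s_h\sigma_{hR}\in\cN_h^{\bf\phi},\qquad \cJ_h^{\bf\phi}(\bar s_h\sigma_{hR})=\mu_h\,J^{\phi_h}(\sigma_{hR})\ \leq\ |G'_m\zeta_h|\,\mu_h\,\mathfrak c-C_0\mu_h\,\mathrm e^{-dR}\qquad\text{for }R\geq R_0 .
\]
Let $\bf u_R\in\cH^{\bf\phi}$ be the pair whose $k$-block is $\bar v_k$ and whose $h$-block is $\bar s_h\sigma_{hR}$.

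Next I would estimate $c^{\bf\phi}$. Since the bumps of $\sigma_{hR}$ sit at distance $R$ from the origin while $\omega$ and the $(v_k)_i$ decay exponentially, every cross-block integral is $\int_{\rn}|(v_k)_i|^p|(\bar s_h\sigma_{hR})_j|^p=O(\mathrm e^{-p\gamma R})$ (use $\min_x(|x|+|x-R\zeta'|)=R$ for $|\zeta'|=1$ and $\gamma<1$). Hence, for $R$ large, condition \eqref{eq:N} holds for $\bf u_R$, so Lemma \ref{lem:N}$(ii)$ furnishes $\bf s_R\in(0,\infty)^2$ with $\bf s_R\bf u_R\in\cN^{\bf\phi}$ and $\cJ^{\bf\phi}(\bf s_R\bf u_R)=\max_{\bf s\in(0,\infty)^2}\cJ^{\bf\phi}(\bf s\bf u_R)$. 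Writing the cross-block part of $\cJ^{\bf\phi}(\bf s\bf u_R)$ as $\frac1p s_k^p s_h^p|B|$ with $B<0$, $|B|=O(\mathrm e^{-p\gamma R})$, and bounding $s_k^p s_h^p\leq\frac12(s_k^{2p}+s_h^{2p})$, the maximisation decouples into two one-variable problems up to an $O(|B|)$ error, so that
\[
c^{\bf\phi}\leq\max_{\bf s}\cJ^{\bf\phi}(\bf s\bf u_R)\leq\cJ_k^{\bf\phi}(\bar v_k)+\cJ_h^{\bf\phi}(\bar s_h\sigma_{hR})+O(\mathrm e^{-p\gamma R})\leq c_k^{\bf\phi}+|G'_m\zeta_h|\,\mu_h\,\mathfrak c-C_0\mu_h\,\mathrm e^{-dR}+O(\mathrm e^{-p\gamma R}).
\]
Because $p\gamma>d$, the last two terms are negative for $R$ large, hence $c^{\bf\phi}<c_k^{\bf\phi}+|G'_m\zeta_h|\,\mu_h\,\mathfrak c$; combining with the lower bound gives $c^{\bf\phi}<c_k^{\bf\phi}+|G\xi|\,c_h^{{\bf\phi}|G_\xi}$ for every $\xi\neq 0$, which is the claim.

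The main obstacle is the energy estimate in the last step: one must be certain that the exponentially small gain $C_0\mathrm e^{-dR}$, produced by the mutual attraction of the bumps of $\sigma_{hR}$ (Lemma \ref{lem:example}), strictly dominates the exponentially small loss coming from the negative cross-block interaction with the origin-centred block $k$. This reduces to comparing the adjacent-bump rate $d=2\sin(\pi/m)$ with $p$, and is exactly where $m\geq 6$ — equivalently $2\sin(\pi/m)\leq 1<p$ for every admissible $p$ — is used; for $m=5$ one has $2\sin(\pi/5)>1$, which can exceed $p$ when $N$ is large, and the argument breaks down.
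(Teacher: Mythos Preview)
Your proposal is correct and follows essentially the same strategy as the paper: bound $|G\xi|\,c_h^{{\bf\phi}|G_\xi}$ from below by $|G'_m\zeta_h|\,\mu_h\,\mathfrak c$ via an orbit-by-orbit analysis together with Proposition~\ref{prop:cooperative}$(ii)$, then place the least energy $k$-block solution at the origin alongside the multi-bump $\bar s_h\sigma_{hR}$ and compare the $\mathrm e^{-dR}$ gain from Lemma~\ref{lem:example} against the exponentially smaller cross-block interaction. The only differences are cosmetic --- you use $\mathrm e^{-\gamma|x|}$ decay with $\gamma<1$ where the paper invokes the sharp $\mathrm e^{-|x|}$ bound from \cite{ad}, and you decouple the maximisation via AM-GM rather than pinning $(s_{1R},s_{2R})\in[1/2,2]^2$ --- and your orbit analysis in fact catches the case $z_1=-z_2$ (for $m$ even, isotropy generated by $\tau\,\mathrm e^{\pi\mathrm i}$) that the paper's case list omits but which does not affect the conclusion.
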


\begin{proof}
	Fix $m\geq 6$. Let $\xi=(z_1,z_2,y)\in\cc\times\cc\times\r^{N-4}$. Then $|G\xi|=\infty$ if $y\neq 0$, \ $|G\xi|=m$ and $G_\xi=\{1,\tau\}$ if $z_1=z_2\neq 0$ and $y=0$, and \ $|G\xi|=2m$ and $G_\xi=\{1\}$ if $z_1\neq z_2$ and $y=0$. As in the proof of Proposition \ref{prop:cooperative_nodal} we see that $\min\{2m\mathfrak{c},\,m\mathfrak{c}^{\theta\,|\{1,\tau\}}\}=2m\mathfrak{c}$, so Proposition \ref{prop:cooperative} yields
	\begin{equation} \label{eq:1}
	\min_{\xi\in\rn\smallsetminus\{0\}} |G\xi|\,c_h^{{\bf\phi}|G_\xi}=\min_{\xi\in\rn\smallsetminus\{0\}} |G\xi|\mu_h\mathfrak{c}^{{\phi_h}|G_\xi}=
		\begin{cases}
			m\mu_h\mathfrak{c} &\text{if \ }\phi_h\equiv 1,\\ \smallskip
			2m\mu_h\mathfrak{c} &\text{if \ }\phi_h=\theta.
		\end{cases}
		\end{equation}
	
Without loss of generality, assume $k=1$ and $h=2$. Let $w_1$ be a least energy solution to the problem 
\[-\Delta w+w=|w|^{2p-2}w,\qquad w\in H^1(\rn)^{\phi_1},\]	
i.e., $w_1$ is the positive radial solution $\omega$ to \eqref{eq:omega} if $\phi_1\equiv 1$, and $w_1=\widehat\omega$ as in Proposition \ref{prop:cooperative_nodal} if $\phi_1=\theta$. By Corollary \ref{cor:cooperative_positive} and Proposition \ref{prop:cooperative_nodal} there exists $\bar t_1\in (0,\infty)^{\ell_1}$ be such that $\bar t_1w_1\in\cN_1^{\bf\phi}$ and $\cJ_1^{\bf\phi}(\bar t_1w_1)=c_1^\phi$. 

Let $\zeta:=(\frac{1}{\sqrt{2}},\frac{1}{\sqrt{2}},0)$ if $\phi_2\equiv 1$ and $\zeta:=(1,0,0)$ if $\phi_2=\theta$, and for each $R>1$ define $\sigma_{2R}\in\cM^{\phi_2}$ as in Examples \ref{example1}. Let $\bar t_2\in M_2$ be such that $F_2(\bar t_2)=\mu_2$. Then $\bar t_2\sigma_{2R}\in\cN_2^{\bf\phi}$, and from Proposition \ref{prop:cooperative} and Lemma \ref{lem:example} for $R$ large enough we get
\begin{align*}
\cJ_2^{\bf\phi}(\bar t_2\sigma_{2R}) =\mu_2J^{\phi_2}(\sigma_{2R})\leq \mu_2(|G\zeta|\mathfrak{c}-C_0\mathrm{e}^{-dR}),
\end{align*}
where $d:=|\zeta-\mathrm{e}^{2\pi\mathrm{i}/m}\zeta|$. 

Define $\bf w_R=(w_{1R},\ldots,w_{\ell R})=(\bar w_{1R},\bar w_{2R})$ with $\bar w_{1R}:=\bar t_1w_1$ and $\bar w_{2R}:=\bar t_2\sigma_{2R}$. Then $\bf w_R$ satisfies \eqref{eq:N} if $R$ is large enough, so there exist $R_0>0$ and, for each $R>R_0$, a pair $(s_{1R},s_{2R})\in[{1}/{2},2]^2$ such that $\bf u_R:=(s_{1R}\bar w_{1R},s_{2 R}\bar w_{2 R})\in\cN^{\bf\phi}$. From the last statement in Lemma \ref{lem:N}$(ii)$ we derive
	\begin{align*}
\cJ^{\bf\phi}(\bf u_R) &= \frac{1}{2}\sum_{i=1}^\ell\|s_{iR}w_{iR}\|^2 - \frac{1}{2p}\sum_{i,j=1}^\ell\beta_{ij}\irn |s_{iR}w_{iR}|^p|s_{jR}w_{jR}|^p \\
&\leq \cJ_1^{\bf\phi}(\bar w_{1R})+\cJ_2^{\bf\phi}(\bar w_{2R}) - \frac{1}{2p}\sum_{i\in I_1, \, j\in I_2}\beta_{ij}\irn |s_{1R}w_{iR}|^p|s_{2R}w_{jR}|^p \\
&\leq c_1^{\bf\phi} + |G\zeta|\mu_2\mathfrak{c} - C\mathrm{e}^{-Rd}  - \frac{1}{2p}\sum_{i\in I_1, \, j\in I_2}\beta_{ij}\irn |s_{1R}w_{iR}|^p|s_{2R}w_{jR}|^p.
	\end{align*}
Since $\omega$ and $\widehat{\omega}$ solve \eqref{eq:omega} we have that
	\begin{equation*}
		|w_{iR}(x)|\leq C\mathrm{e}^{-|x|}\quad\forall \ i\in I_1\qquad\text{and}\qquad |w_{jR}(x)|\leq C\mathrm{e}^{-|x-R\zeta|}\quad\forall \ j\in I_2,
	\end{equation*}
see \cite[Corollary 2.4]{ad}. Therefore,
\[\irn |w_{iR}|^p|w_{jR}|^p \leq C\irn \mathrm{e}^{-p|x|}\,\mathrm{e}^{-p|x-R\zeta|}\d x \leq C\mathrm{e}^{-Rp}\qquad\forall \ i\in I_1, \ j\in I_2.\]
Since $p>1\geq d$ when $m\geq 6$, it follows that
\[c^{\bf\phi}\leq \cJ^{\bf\phi}(\bf u_R)<c_1^{\bf\phi} + |G\zeta|\mu_2\mathfrak{c}.\]
This, together with \eqref{eq:1}, yields our claim.
\end{proof}

\begin{remark}\label{rem:bounds}
\emph{To extend the previous argument to a system with more than two blocks one needs some information on the decay of the components of solutions to a system. Namely, if we knew that a minimizer $(w_1,\ldots,w_{\ell_{q-1}})$ of $\cJ_Q^{\bf\phi}$ on $\cN_Q^{\bf\phi}$ with $Q=\{1,\ldots,q-1\}$, $q\geq 3$, satisfies
\begin{equation*}
|w_{i}(x)|\leq C_\delta\mathrm{e}^{-\delta|x|}\qquad\forall \ i\in I_Q,
\end{equation*}
for every $\delta\in(0,1)$ and some constant $C_\delta$, then, taking $w_{jR}$ to be a suitable multiple of $\sigma_{qR}$ for $j\in I_q$ and choosing $\delta$ such that $p\delta>1$, we would have
\[\irn |w_{i}|^p|w_{jR}|^p \leq C\irn \mathrm{e}^{-p\delta|x|}\,\mathrm{e}^{-p|x-R\zeta|}\d x \leq C\mathrm{e}^{-Rp\delta}= o(\mathrm{e}^{-Rd})\qquad\forall \ i\in I_Q, \ j\in I_q.\]
This would allow us to extend Lemma \ref{lem:energy_bounds} to any $q>2$. Information on the decay of the components does not seem to be available so far.}
\end{remark}

\begin{theorem} \label{thm:energy_estimates}
Let $N=4$ or $N\geq 6$, $q=2$, $G:=G_6$ and $\theta$ be as in \emph{Example} \ref{example1} (with $m=6$). Let $\{1,2\}=Q^+\cup Q^-$ with $Q^+\cap Q^-=\emptyset$ and set $\phi_h\equiv 1$ if $h\in Q^+$ and $\phi_h=\theta'$ if $h\in Q^-$. Then there is a least energy block-wise nontrivial solution $\bf w=(\bar w_1,\bar w_2)$ to the system $(\mathscr S^{\bf\phi})$ such that all nontrivial components of $\bar w_h$ are positive if $h\in Q^+$, all nontrivial components of $\bar w_h$ change sign if $h\in Q^-$, and
\begin{equation} \label{eq:upper_bds2}
\cJ^{\bf\phi}(\bf w)<
\begin{cases}
\min\limits_{h\neq k}(\mu_k+6\mu_h)\mathfrak{c} &\text{if \ }Q^+=\{1,2\}, \\
(\mu_k+12\mu_h)\mathfrak{c} &\text{if \ }k\in Q^+\text{ and }h\in Q^-,\smallskip \\
12(\mu_1+\mu_2)\mathfrak{c} &\text{if \ }Q^-=\{1,2\}.
\end{cases}
\end{equation}
\end{theorem}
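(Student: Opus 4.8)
The plan is to combine three ingredients already assembled in the excerpt: the existence criterion of Corollary \ref{cor:main_inequality}$(ii)$, the strict energy inequality of Lemma \ref{lem:energy_bounds}, and the explicit formulas $c_h^{\bf\phi}=\mu_h\mathfrak{c}^{\phi_h}$ together with the bounds $\mathfrak{c}^\theta<2m\,\mathfrak{c}$ from Proposition \ref{prop:cooperative_nodal}. First I would set $G:=G_6$ (so $m=6$, which is admissible for Lemma \ref{lem:energy_bounds} since it requires $m\geq 6$) and let $\bf\phi=(\phi_1,\phi_2)$ be as prescribed by the partition $\{1,2\}=Q^+\cup Q^-$. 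Lemma \ref{lem:energy_bounds} gives exactly the hypothesis \eqref{eq:condition} of Corollary \ref{cor:main_inequality}$(ii)$ for $q=2$: for each $h\in\{1,2\}$ and each $\xi\in\rn\smallsetminus\{0\}$ one has $c^{\bf\phi}<c_k^{\bf\phi}+|G\xi|\,c_h^{{\bf\phi}|G_\xi}$ with $k\neq h$. Hence $(\mathscr S^{\bf\phi})$ has a least energy block-wise nontrivial solution $\bf w=(\bar w_1,\bar w_2)$; by Remark \ref{rem:positive} we may take the nontrivial components of $\bar w_h$ positive when $\phi_h\equiv 1$, and they automatically change sign when $\phi_h=\theta$ since $\theta$ is surjective. (If one wants full nontriviality of all components, invoke Theorem \ref{thm:fully_nontrivial} under $(B_3^{\bf\phi})$, but for the energy bound the block-wise statement already suffices.)

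Next I would extract the numerical bound. The point is that in the proof of Lemma \ref{lem:energy_bounds} one does not merely get $c^{\bf\phi}<c_k^{\bf\phi}+|G\xi|\,c_h^{{\bf\phi}|G_\xi}$ for the minimizing $\xi$, but in fact, taking the concrete test configuration there (one block placed as $\bar t_1 w_1$ at the origin, the other as $\bar t_2\sigma_{hR}$ concentrated on the orbit of $R\zeta$), the explicit estimate
\[
c^{\bf\phi}\ \le\ \cJ^{\bf\phi}(\bf u_R)\ <\ c_k^{\bf\phi}+|G\zeta|\,\mu_h\,\mathfrak{c},
\]
where $\zeta=(\tfrac1{\sqrt2},\tfrac1{\sqrt2},0)$ and $|G\zeta|=m=6$ when $\phi_h\equiv 1$, and $\zeta=(1,0,0)$ and $|G\zeta|=2m=12$ when $\phi_h=\theta$. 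So I would just read off, using $c_k^{\bf\phi}=\mu_k\mathfrak{c}$ when $\phi_k\equiv 1$ and $c_k^{\bf\phi}=\mu_k\mathfrak{c}^\theta<12\,\mu_k\mathfrak{c}$ when $\phi_k=\theta$ (Proposition \ref{prop:cooperative_nodal}$(b)$ with $m=6$):
\begin{itemize}
\item If $Q^+=\{1,2\}$: both $\phi_h\equiv 1$, so $c^{\bf\phi}<\mu_k\mathfrak{c}+6\mu_h\mathfrak{c}$, and minimizing over the two choices of the roles $(h,k)$ gives $\cJ^{\bf\phi}(\bf w)<\min_{h\ne k}(\mu_k+6\mu_h)\mathfrak{c}$.
\item If $k\in Q^+$, $h\in Q^-$: then $c_k^{\bf\phi}=\mu_k\mathfrak{c}$ and $|G\zeta|=12$ in the $h$-block, so $c^{\bf\phi}<\mu_k\mathfrak{c}+12\mu_h\mathfrak{c}=(\mu_k+12\mu_h)\mathfrak{c}$.
\item If $Q^-=\{1,2\}$: take $k=1$, $h=2$; then $c_1^{\bf\phi}=\mu_1\mathfrak{c}^\theta<12\mu_1\mathfrak{c}$ and $|G\zeta|=12$, so $c^{\bf\phi}<12\mu_1\mathfrak{c}+12\mu_2\mathfrak{c}=12(\mu_1+\mu_2)\mathfrak{c}$.
\end{itemize}
This yields \eqref{eq:upper_bds2}.

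The step I expect to require the most care is making rigorous the claim that the test-function estimate in Lemma \ref{lem:energy_bounds} survives replacing $c_k^{\bf\phi}$ by its \emph{upper} bound $12\mu_k\mathfrak{c}$ when $\phi_k=\theta$: in that case $\bar w_{1R}=\bar t_1 w_1$ with $w_1=\widehat\omega$ a minimizer for $\mathfrak{c}^\theta$, and one must check that $\widehat\omega$ (or rather a truncation/concentrated copy of it) still has the exponential decay $|\widehat\omega(x)|\le C\mathrm{e}^{-|x|}$ needed to make the cross term $\int|w_{iR}|^p|w_{jR}|^p\le C\mathrm{e}^{-Rp}=o(\mathrm{e}^{-Rd})$; this is exactly where \cite[Corollary 2.4]{ad} (or \cite[Corollary 2.4]{ad} applied to the sign-changing solution) is used, and it is the only nonroutine point. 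Everything else is bookkeeping: verifying $\bf u_R\in\cN^{\bf\phi}$ via Lemma \ref{lem:N}$(ii)$, that $(s_{1R},s_{2R})$ stay in a compact subset of $(0,\infty)^2$, and that $p>1\ge d$ for $m=6$ so the negative exponential gain $-C_0\mathrm{e}^{-dR}$ dominates the cross term. I would present the argument by (i) invoking Lemma \ref{lem:energy_bounds} to get existence via Corollary \ref{cor:main_inequality}$(ii)$; (ii) re-running its proof once more, now tracking the constant, to get the three explicit upper bounds; (iii) converting $c_h^{\bf\phi}$ and $c^{\bf\phi}=\cJ^{\bf\phi}(\bf w)$ into the stated inequalities using Propositions \ref{prop:cooperative} and \ref{prop:cooperative_nodal}.
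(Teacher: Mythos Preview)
Your proposal is correct and follows essentially the same route as the paper's proof: invoke Lemma~\ref{lem:energy_bounds} with $m=6$ to verify the hypothesis of Corollary~\ref{cor:main_inequality}$(ii)$, get existence and the required signs via Remark~\ref{rem:positive}, and then read off the explicit upper bounds by combining the strict inequality $c^{\bf\phi}<c_k^{\bf\phi}+|G\xi|\,c_h^{{\bf\phi}|G_\xi}$ with equation~\eqref{eq:1}, Corollary~\ref{cor:cooperative_positive} and Proposition~\ref{prop:cooperative_nodal}. The only difference is cosmetic: the paper does not ``re-run'' the proof of Lemma~\ref{lem:energy_bounds} but simply cites equation~\eqref{eq:1} (which already identifies the minimum of $|G\xi|\,c_h^{{\bf\phi}|G_\xi}$ over $\xi\neq 0$ as $6\mu_h\mathfrak{c}$ or $12\mu_h\mathfrak{c}$) and plugs it into the strict inequality; your concern about bounding $c_k^{\bf\phi}$ by $12\mu_k\mathfrak{c}$ in the $Q^-=\{1,2\}$ case is a non-issue, since $c^{\bf\phi}<c_k^{\bf\phi}+12\mu_h\mathfrak{c}$ and $c_k^{\bf\phi}<12\mu_k\mathfrak{c}$ immediately give $c^{\bf\phi}<12(\mu_1+\mu_2)\mathfrak{c}$ without any further appeal to decay estimates.
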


\begin{proof}
Lemma \ref{lem:energy_bounds} with $m=6$, $\phi_h\equiv 1$ if $h\in Q^+$ and $\phi_h=\theta$ if $h\in Q^-$, together with Corollary \ref{cor:main_inequality} and Remark \ref{rem:positive}, yields the existence of a solution $\bf w=(\bar w_1,\bar w_2)$ such that all nontrivial of $\bar w_h$ are positive if $h\in {Q^+}$, all nontrivial components of $ \bar w_k$ change sign if $k\in {Q^-}$, and
\[\cJ^{\bf\phi}(\bf w)=c^{\bf\phi}<c_k^{\bf\phi} + |G\xi|\,c_h^{{\bf\phi}|G_\xi}\qquad\text{for every \ }\xi\in\rn\smallsetminus\{0\}\text{ \ and \ }k\neq h, \ h,k\in\{1,2\}.\]
The upper bounds for $\cJ^{\bf\phi}(\bf w)$ now follow from Corollary \ref{cor:cooperative_positive}, Proposition \ref{prop:cooperative_nodal} and equation \eqref{eq:1}.
\end{proof}
\medskip

\begin{proof}[Proof of Theorem \ref{thm:Main1}.] \ Let $\phi_h\equiv 1$ if $h\in Q^+$ and $\phi_h$ be equal to $\theta:G'_m\to\z_2$ if $q=1$, $\theta:G_m\to\z_2$ if $q=2$ and $\t:G_\infty\to\z_2$ if $q\geq 3$, as in Examples \ref{example1} and \ref{example2}, if $h\in Q^-$. Set $C_q:=C_{\bf\phi}$ as in \eqref{eq:C*}. Then, Corollary \ref{cor:cooperative_positive}, Proposition \ref{prop:cooperative_nodal}, Theorem \ref{thm:energy_estimates} and Theorem \ref{thm:existence} establish existence of a least energy block-wise nontrivial solution whose components have the required signs, and provide the upper energy estimates for $q=1,2$. Theorem \ref{thm:fully_nontrivial} asserts that these solutions are fully nontrivial. The strict lower energy estimate follows from Proposition \ref{prop:ground_states}, Corollary \ref{cor:cooperative_positive} and Proposition \ref{prop:cooperative_nodal}.
\end{proof}

\section{The singularly perturbed system}
\label{sec:singularly_perturbed}

Let $G$ be a closed subgroup of $O(N)$, $\phi_h:G\to\z_2$ be continuous homomorphisms satisfying $(A_{\phi_h})$ for each $h\in Q$ and define $\phi_i := \phi_h$ for all $i\in I_h$. Let $\o$ be a $G$-invariant bounded domain in $\rn$ such that $0\in\o$. We consider the system of singularly perturbed elliptic equations
\begin{equation*}
(\mathscr S_{\eps,\o}^{\bf\phi})\qquad
\begin{cases}
-\eps^2\Delta u_i+ u_i = \dsum_{j=1}^\ell \beta_{ij}|u_j|^p|u_i|^{p-2}u_i, \\
u_i\in H^1(\o)^{\phi_i},\qquad i=1,\ldots,\ell,
\end{cases}
\end{equation*}
where $\eps>0$, $1<p<\frac{N}{N-2}$, $N\geq 4$, and the matrix $(\beta_{ij})$ satisfies assumption $(B_1)$.

Note that $\bf u=(u_1,\ldots,u_\ell)$ solves $(\mathscr S_{\eps,\o}^{\bf\phi})$ iff $\wt{\bf u}=(\wt{u}_1,\ldots,\wt{u}_\ell)$ with $\wt{u}_i(z):=u_i(\eps z)$ solves the system
\begin{equation*}
(\mathscr{S}_{\o_\eps}^{\bf\phi})\qquad
\begin{cases}
-\Delta v_i+v_i = \dsum_{j=1}^\ell \beta_{ij}|v_j|^p|v_i|^{p-2}v_i, \\
v_i\in H^1(\o_\eps)^{\phi_i},\qquad i=1,\ldots,\ell,
\end{cases}
\end{equation*}
where $\o_\eps:=\{z\in\rn:\eps z\in\o\}$.

A \textbf{least energy block-wise nontrivial solution to this system} is defined as in Section \ref{sec:preliminaries}, replacing $\rn$ with $\o_\eps$. If $\eps_n\to 0$ and $\wt{\bf u}_n$ is a least energy block-wise nontrivial solution to $(\mathscr{S}_{\o_{\eps_n}}^{\bf\phi})$, then, arguing as in \cite[Lemma 4.2]{CS}, one sees that  $(\wt{\bf u}_n)$ is a minimizing sequence for the functional $\cJ^{\bf\phi}$ associated to the system $(\mathscr S^{\bf\phi})$ on $\cN^{\bf\phi}$.

\begin{proposition} \label{prop:eps_fully_nontrivial}
Assume $(B_1)$ and $(B_2)$. The following statements hold true:
\begin{itemize}
\item[$(i)$] For every $\eps>0$ the system $(\mathscr S_{\o_\eps}^{\bf\phi})$ has a least energy block-wise nontrivial solution.
\item[$(ii)$] If $\F(G)\neq 0$ there exists $\eps_0>0$ such that, if $\eps<\eps_0$, every least energy block-wise nontrivial solution to the system $(\mathscr S_{\o_\eps}^{\bf\phi})$ is fully nontrivial.
\end{itemize}
\end{proposition}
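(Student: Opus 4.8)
The plan is to reduce both statements to results already established for the system $(\mathscr S^{\bf\phi})$ on $\rn$ via the rescaling $\o_\eps=\{z:\eps z\in\o\}$ and a compactness/comparison argument. For part $(i)$, fix $\eps>0$. Since $0\in\o$ and $\o$ is bounded, $\o_\eps$ is a bounded $G$-invariant domain, so $H^1(\o_\eps)^{\phi_i}$ embeds compactly into $L^{2p}(\o_\eps)$; hence the functional $\cJ^{\bf\phi}$ associated to $(\mathscr S_{\o_\eps}^{\bf\phi})$ satisfies the Palais–Smale condition on its block-wise Nehari set. The existence of a minimizer then follows by the standard direct method, exactly as in the proof that a minimizer of $\cJ^{\bf\phi}_Q$ on $\cN^{\bf\phi}_Q$ is attained (the analogue of \cite[Lemma 2.4]{cp}); one first checks $\cN^{\bf\phi}_{\o_\eps}\neq\emptyset$ using that $\beta_{ii}>0$, so condition \eqref{eq:N} can be arranged on any block, and Lemma \ref{lem:N}$(ii)$ gives the rescaling onto the Nehari set. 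A minimizing sequence is bounded, converges (along a subsequence) strongly in $L^{2p}$ by compactness, and the limit lies in $\cN^{\bf\phi}_{\o_\eps}$ and attains the infimum; by the (domain version of the) argument in \cite[Lemma 2.4]{cp} it is a critical point, i.e. a least energy block-wise nontrivial solution.

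For part $(ii)$, the strategy is to transplant the machinery of Section \ref{sec:fully_nontrivial}. Abbreviate by $c^{\bf\phi}_{\o_\eps}$ the least block-wise nontrivial energy on $\o_\eps$. First I would show $c^{\bf\phi}_{\o_\eps}\to c^{\bf\phi}$ as $\eps\to 0$: the inequality $c^{\bf\phi}_{\o_\eps}\ge c^{\bf\phi}$ is immediate from $H^1(\o_\eps)\subset H^1(\rn)$ (extension by zero); for the reverse, since $\F(G)\neq 0$ we may, as in the proof of Lemma \ref{lem:energy_estimates2}, translate along a fixed point of $G$ and use cut-off functions supported in $\o_\eps$ (which exhausts $\rn$ as $\eps\to 0$) to approximate any element of $\cN^{\bf\phi}$, yielding $\limsup_{\eps\to 0}c^{\bf\phi}_{\o_\eps}\le c^{\bf\phi}$. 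Next, given a least energy block-wise nontrivial solution $\wt{\bf u}_\eps$ on $\o_\eps$, extend it by zero to $\rn$; as noted in the excerpt (arguing as in \cite[Lemma 4.2]{CS}), $(\wt{\bf u}_{\eps_n})$ is then a minimizing sequence for $\cJ^{\bf\phi}$ on $\cN^{\bf\phi}$ whenever $\eps_n\to 0$. Apply Theorem \ref{thm:splitting} to this sequence. Because $\F(G)\neq 0$, Proposition \ref{prop:ground_states}$(ii)$ forces the splitting to be trivial — no energy can leak into translated bubbles at infinity without violating $c^{\bf\phi}=\sum_h c^{\bf\phi}_h$ being the only way $c^{\bf\phi}$ could fail to be attained, and conversely any genuine splitting would contradict $\cJ^{\bf\phi}(\wt{\bf u}_{\eps_n})=c^{\bf\phi}_{\o_{\eps_n}}\to c^{\bf\phi}$. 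More precisely, I would argue that $Q=\{1,\dots,q\}$ in Theorem \ref{thm:splitting}, so $\wt{\bf u}_{\eps_n}\to\bf w$ strongly in $(H^1(\rn))^\ell$ with $\bf w$ a least energy block-wise nontrivial solution to $(\mathscr S^{\bf\phi})$.

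Once strong convergence $\wt{\bf u}_{\eps_n}\to\bf w$ is in hand, the conclusion follows by a contradiction argument modeled on Lemma \ref{lem:neighbors}. Suppose, along some sequence $\eps_n\to 0$, that $\wt{\bf u}_{\eps_n}$ has a vanishing component while $\bf w$ — being a least energy block-wise nontrivial solution to $(\mathscr S^{\bf\phi})$ — has, by Theorem \ref{thm:fully_nontrivial} under $(B_1)$, $(B_2)$, $(B_3^{\bf\phi})$ (or, when $(B_3^{\bf\phi})$ is not assumed, under $(B_2)$ alone the same graph-connectivity argument as in Theorem \ref{thm:fully_nontrivial} once one has the cooperative structure), all components nontrivial. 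Then $\|\wt{u}_{in,\eps_n}\|\to\|w_i\|>0$ for the corresponding index $i$, contradicting vanishing. Hence for $n$ large every component is nontrivial; since $\eps_0$ depends only on $G,\bf\phi,(\beta_{ij}),\o$, this gives the uniform threshold. The main obstacle I anticipate is the passage $c^{\bf\phi}_{\o_\eps}\to c^{\bf\phi}$ together with the rigidity step that rules out splitting: one must carefully use $\F(G)\neq 0$ both to get the energy upper bound (via translating into the growing domain along a fixed point) and to invoke Proposition \ref{prop:ground_states}, and then confirm that the only scenario compatible with the energy converging to $c^{\bf\phi}$ is $Q=\{1,\dots,q\}$ with no escaping mass — this is where the structure of Theorem \ref{thm:splitting}$(v)$ must be squeezed hardest.
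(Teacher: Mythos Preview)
Your argument for part $(i)$ is fine and matches the paper's (which simply cites \cite[Theorem 1.1]{cp}).

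The gap is in part $(ii)$, and it is a real one. You claim that applying Theorem \ref{thm:splitting} to the minimizing sequence yields $Q=\{1,\ldots,q\}$, i.e., no splitting, and hence strong convergence to a least energy block-wise nontrivial solution $\bf w$ of the full system $(\mathscr S^{\bf\phi})$. But this is exactly what \emph{cannot} happen when $\F(G)\neq 0$ and $q\geq 2$: Proposition \ref{prop:ground_states}$(ii)$--$(iii)$ (equivalently Corollary \ref{cor:main_inequality}$(i)$) says that in this case $c^{\bf\phi}=\sum_h c_h^{\bf\phi}$ is \emph{not attained}, so no such $\bf w$ exists. Your reading of Proposition \ref{prop:ground_states}$(ii)$ is inverted: $\F(G)\neq 0$ forces splitting, it does not prevent it. Consequently your subsequent appeal to Theorem \ref{thm:fully_nontrivial} on $\bf w$ collapses, and your own worry about needing $(B_3^{\bf\phi})$ (which is not assumed here) is a symptom of this.

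The paper's route is the opposite. Because $\F(G)\neq 0$, any subsystem $(\mathscr S^{\bf\phi}_Q)$ with $|Q|\geq 2$ has no least energy block-wise nontrivial solution, so the set $Q$ produced by Theorem \ref{thm:splitting} satisfies $|Q|\leq 1$. Hence, for the block $k$ containing the vanishing index $i$, statement $(iv)$ of Theorem \ref{thm:splitting} gives (after translation) convergence of $\wt u_{in}$ to a component $v_i$ of a least energy nontrivial solution $\bar v_k$ of the \emph{cooperative} system $(\mathscr S_k^{\bf\phi|G_k})$. For a single block, condition $(B_3^{\bf\phi})$ is vacuous, so Theorem \ref{thm:fully_nontrivial} applies under $(B_1)$ and $(B_2)$ alone and gives $v_i\neq 0$; this contradicts $\wt u_{in}=0$. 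That is the missing idea: let the sequence split completely into cooperative pieces, and use full nontriviality of each cooperative limit --- not of a nonexistent global limit.
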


\begin{proof}
$(i):$ It follows from \cite[Theorem 1.1]{cp} that $(\mathscr{S}_{\o_\eps}^{\bf\phi})$ has a least energy blockwise nontrivial solution.

$(ii):$ Arguing by contradiction, assume that for any $n\in\n$ there exist $\eps_n\in(0,{1/n})$ and a least energy block-wise nontrivial solution $\wt{\bf u}_n$ to $(\mathscr{S}_{\o_{\eps_n}}^{\bf\phi})$ which is not fully nontrivial. Passing to a subsequence, we may choose $i\in I_k$ such that $\wt{u}_{in}=0$ for all $n$. 

As stated above, $(\wt{\bf u}_n)$ is a minimizing sequence for $\cJ^{\bf\phi}$ on $\cN^{\bf\phi}$. If $\F(G)\neq 0$ Proposition \ref{prop:ground_states} states that no subsystem of $(\mathscr S^{\bf\phi})$ having at least two blocks has a least energy block-wise nontrivial solution. So we derive from Theorem \ref{thm:splitting} that, after passing to a subsequence, there exist a sequence $(\xi_{kn})$ in $\rn$, a closed subgroup $G_k$ of $G$, and a least energy nontrivial solution $\bar v_k$ to the cooperative system
\begin{equation*}
(\mathscr S_{k}^{\bf\phi|G_k})\qquad
\begin{cases}
-\Delta v_i+ v_i = \dsum\limits_{j\in I_k} \beta_{ij}|v_j|^p|v_i|^{p-2}v_i, \\
v_i\in H^1(\rn)^{\phi_i|G_k},\qquad i\in I_k,
\end{cases}
\end{equation*}
such that $|G/G_k|<\infty$, $\dlim\limits_{n\to\infty}|g\xi_{kn}-\bar g\xi_{kn}|=\infty$ for any $g,\bar g\in G$ with $\bar g g^{-1}\notin G_k$, and
\[\lim_{n\to\infty}\Big\|\wt u_{in}-\dsum_{[g]\in G/G_k}\phi_i(g)(v_i\circ g^{-1})( \ \cdot \ -g\xi_{kn})\Big\|=0.\]
Since we are assuming that $\wt u_{in}=0$ for all $n$, this implies that $v_i=0$, which is a contradiction because $\bar v_k$ is fully nontrivial by Theorem \ref{thm:fully_nontrivial}.
\end{proof}

Recall Examples \ref{example1}. Note that $\F(G'_m)\neq \{0\}$ if $N\geq 5$. The following theorem establishes the existence of solutions whose limit profile becomes uncoupled.

\begin{theorem} \label{thm:eps_uncoupled}
Assume $(B_1)$ and $(B_2)$. Let $N\geq 5$. Set $G':=G'_5$ and let $\o$ be a $G$-invariant bounded domain in $\rn$ such that $0\in\o$. Write $\{1,\ldots,q\}=Q^+\cup Q^-$ with $Q^+\cap Q^-=\emptyset$ and let $\phi_h\equiv 1$ if $h\in Q^+$ and $\phi_h=\theta$ if $h\in Q^-$. Then, for any $\eps_n>0$ with $\eps_n\to 0$, after passing to a subsequence, there exists a least energy fully nontrivial solution $\wt{\bf u}_n=(\wt{u}_{1n},\ldots,\wt{u}_{\ell n})$ to the system $(\mathscr{S}_{\o_{\eps_n}}^{\bf\phi})$ such that $\wt{u}_{in}$ is positive if $i\in I_{Q^+}$ and $\wt{u}_{in}$ changes sign if $i\in I_{Q^-}$. Furthermore, for each $h=1,\ldots,q$, there exist a sequence $(\xi_{hn})$ in $\rn$ and a least energy fully nontrivial solution $\bar v_h$ to the cooperative system
\begin{equation*}\label{eq:subsystem_h_phi}
\tag{$\mathscr S_{h}^{\bf\phi}$}\qquad
\begin{cases}
-\Delta v_i+ v_i = \dsum\limits_{j\in I_h} \beta_{ij}|v_j|^p|v_i|^{p-2}v_i, \\
v_i\in H^1(\rn)^{\phi_h},\qquad i\in I_h,
\end{cases}
\end{equation*}
which satisfy
\begin{itemize}
\item[$(a)$] the components of $\bar v_h$ are positive if $h\in Q^+$ and they change sign if $h\in Q^-$,
\item[$(b)$] $\xi_{hn}\in\F(G')$ for all $h$, and $\dlim_{n\to\infty}|\xi_{hn}-\xi_{kn}|=\infty$ if $h\neq k$, 
\item[$(c)$] $\dlim_{n\to\infty}\left\|\wt u_{in}-v_i( \ \cdot \ -\xi_{hn})\right\|=0$ \ for every \ $i\in I_h,$
\item[$(d)$] $\|\bar v_h\|^2=\mu_h\|\omega\|^2$ \ if $h\in Q^+$, \ and \ $\|\bar v_h\|^2<10\,\mu_h\|\omega\|^2$ \ if $h\in Q^-$.
\end{itemize}
\end{theorem}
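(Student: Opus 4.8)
Write $G':=G'_5$; since $N\geq 5$ one has $\F(G')\neq\{0\}$, which is the fact that makes everything run. The plan is to take the $\wt{\bf u}_n$ to be least energy block-wise nontrivial solutions of $(\mathscr S_{\o_{\eps_n}}^{\bf\phi})$ and to read off their limit profile from the splitting result, Theorem~\ref{thm:splitting}. First I would invoke Proposition~\ref{prop:eps_fully_nontrivial}$(i)$ to obtain such a solution for each $n$; by Remark~\ref{rem:positive} it can be chosen with its nontrivial components positive on $I_{Q^+}$, and its nontrivial components on $I_{Q^-}$ change sign automatically because $\theta$ is surjective. Since $\F(G')\neq\{0\}$, Proposition~\ref{prop:eps_fully_nontrivial}$(ii)$ gives $\eps_0>0$ such that $\wt{\bf u}_n$ is fully nontrivial once $\eps_n<\eps_0$; discarding finitely many indices we keep only those, so $\wt u_{in}>0$ for $i\in I_{Q^+}$ and $\wt u_{in}$ changes sign for $i\in I_{Q^-}$. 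Extending by zero to $\rn$ and arguing as in \cite[Lemma 4.2]{CS} (as already noted before Proposition~\ref{prop:eps_fully_nontrivial}) one has $\wt{\bf u}_n\in\cN^{\bf\phi}$ and $\cJ^{\bf\phi}(\wt{\bf u}_n)=c^{\bf\phi}_{\o_{\eps_n}}\to c^{\bf\phi}$, so $(\wt{\bf u}_n)$ is a minimizing sequence. Applying Theorem~\ref{thm:splitting} to it yields, along a subsequence, a set $Q\subset\{1,\ldots,q\}$, a least energy block-wise nontrivial solution $\bf w$ of $(\mathscr S_Q^{\bf\phi})$ if $Q\neq\emptyset$, and, for each $k\notin Q$, a sequence $(\xi_{kn})$, a subgroup $G_k=G_{\xi_{kn}}$ and a least energy nontrivial solution $\bar v_k$ of $(\mathscr S_k^{\bf\phi|G_k})$, with properties $(i)$--$(v)$ there and with $v_i,w_i\geq 0$ on $I_{Q^+}$.

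The heart of the proof, and the step I expect to be the main obstacle, is to pin down this profile: I must show $|Q|\leq 1$ and $G_k=G'$ for every $k\notin Q$. For $|Q|\leq 1$: if $|Q|\geq 2$ then $\bf w$ is a least energy block-wise nontrivial solution of a subsystem with at least two blocks, which is impossible by Corollary~\ref{cor:main_inequality}$(i)$ applied to that subsystem, since $\F(G')\neq\{0\}$. For the second claim I would start from the energy identity of Theorem~\ref{thm:splitting}$(v)$ together with $c^{\bf\phi}=\sum_{h=1}^q c_h^{\bf\phi}$ and $c_Q^{\bf\phi}=\sum_{h\in Q}c_h^{\bf\phi}$ (Proposition~\ref{prop:ground_states}$(ii)$, valid because $\F(G')\neq\{0\}$), which reduce everything to the single equality $\sum_{k\notin Q}|G'/G_k|\,c_k^{\bf\phi|G_k}=\sum_{k\notin Q}c_k^{\bf\phi}$. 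I would then show termwise that $|G'/G_k|\,c_k^{\bf\phi|G_k}\geq c_k^{\bf\phi}$, with equality only when $G_k=G'$: the isotropy subgroups occurring in $G'_5$ are $G'$ (orbit size $1$), $\{1,\tau\}$ (orbit size $5$) and $\{1\}$ (orbit size $10$); by Proposition~\ref{prop:cooperative}$(ii)$, $c_k^{\bf\phi|G_k}=\mu_k\,\mathfrak c^{\phi_k|G_k}$, and one uses $\mathfrak c^{\phi_k|G_k}=\mathfrak c$ whenever $\phi_k\equiv 1$ (as $\omega$ is radial), $\mathfrak c^{\theta|\{1,\tau\}}\geq 2\,\mathfrak c$ (every such function changes sign, as in the proof of Proposition~\ref{prop:cooperative_nodal}$(a)$), $\mathfrak c^{\theta|\{1\}}=\mathfrak c$, and the sharp bound $\mathfrak c^\theta<2\cdot 5\,\mathfrak c=10\,\mathfrak c$ from Proposition~\ref{prop:cooperative_nodal}$(a)$ with $m=5$. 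This is exactly why $m=5$ is the right choice: it is the smallest $m$ with $\mathfrak c^\theta$ attained and $\mathfrak c^\theta<10\,\mathfrak c$, and it also produces the constant $10$ in $(d)$. Since the two total sums coincide, every inequality is an equality, forcing $G_k=G'$, hence $\phi_i|G_k=\phi_i$, $c_k^{\bf\phi|G_k}=c_k^{\bf\phi}$ and $(\mathscr S_k^{\bf\phi|G_k})=(\mathscr S_k^{\bf\phi})$, for all $k\notin Q$.

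With the profile identified the rest is bookkeeping. If $Q=\{h_0\}$ I set $\xi_{h_0n}:=0$ and $\bar v_{h_0}:=\bf w$; then every $\bar v_h$ is a least energy block-wise nontrivial solution of the single-block cooperative system $(\mathscr S_h^{\bf\phi})$, hence fully nontrivial by Theorem~\ref{thm:fully_nontrivial} (its hypothesis $(B_3^{\bf\phi})$ being vacuous for one block). For $h\in Q^+$, $\phi_h\equiv 1$ gives $v_i\geq 0$, and $v_i\not\equiv 0$ gives $v_i>0$ by the maximum principle; for $h\in Q^-$, surjectivity of $\theta$ makes each $v_i$ change sign, which is $(a)$. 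Since $G_{\xi_{kn}}=G_k=G'$ for $k\notin Q$ and $\xi_{h_0n}=0$, all $\xi_{hn}$ lie in $\F(G')$; the escaping property in $(b)$ follows from Theorem~\ref{thm:splitting}$(ii)$--$(iii)$, using that $G'\xi=\{\xi\}$ for $\xi\in\F(G')$, so that $\mathrm{dist}(G'\xi_{kn},G'\xi_{k'n})=|\xi_{kn}-\xi_{k'n}|\to\infty$, and $|\xi_{h_0n}-\xi_{kn}|=|\xi_{kn}|\to\infty$. Property $(c)$ is Theorem~\ref{thm:splitting}$(i)$ for $i\in I_{h_0}$ and Theorem~\ref{thm:splitting}$(iv)$ for $i\in I_k$, $k\notin Q$, the sum over $G'/G_k$ being a single term. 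Finally $\tfrac{p-1}{2p}\|\bar v_h\|^2=\cJ_h^{\bf\phi}(\bar v_h)=c_h^{\bf\phi}$, so $(d)$ follows from $\mathfrak c=\tfrac{p-1}{2p}\|\omega\|^2$ together with $c_h^{\bf\phi}=\mu_h\,\mathfrak c$ for $h\in Q^+$ (Corollary~\ref{cor:cooperative_positive}) and $c_h^{\bf\phi}=\mu_h\,\mathfrak c^\theta<10\,\mu_h\,\mathfrak c$ for $h\in Q^-$ (Proposition~\ref{prop:cooperative_nodal}$(b)$ with $m=5$). The $\wt{\bf u}_n$ so constructed are fully nontrivial with the prescribed signs, completing the argument.
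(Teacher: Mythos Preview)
Your proposal is correct and follows essentially the same route as the paper's proof: take least energy block-wise nontrivial solutions of $(\mathscr S_{\o_{\eps_n}}^{\bf\phi})$, view them as a minimizing sequence for $\cJ^{\bf\phi}$ on $\cN^{\bf\phi}$, apply Theorem~\ref{thm:splitting}, and use the energy identity $(v)$ together with Proposition~\ref{prop:ground_states}$(ii)$ and the cooperative energy computations (Corollary~\ref{cor:cooperative_positive}, Proposition~\ref{prop:cooperative_nodal}) to force $G_k=G'$ for every $k$. Your treatment is in fact slightly more explicit than the paper's in two places: you handle the case $|Q|=1$ separately (setting $\xi_{h_0n}:=0$, $\bar v_{h_0}:=\bf w$), whereas the paper tacitly relies on the fact that $\F(G')\neq\{0\}$ forces Case~(I) in the proof of Theorem~\ref{thm:splitting} and hence $Q=\emptyset$; and you enumerate the isotropy subgroups of $G'_5$ and check the termwise inequality $|G'/G_k|\,\mathfrak c^{\phi_k|G_k}\geq\mathfrak c^{\phi_k}$ directly, while the paper phrases the $Q^-$ case as the strict inequality $\mathfrak c^\theta<|G'\xi|\,\mathfrak c^{\theta|G'_\xi}$ for $\xi$ with $(z_1,z_2)\neq(0,0)$.
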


\begin{proof}
Let $\wt{\bf u}_n$ be a least energy block-wise nontrivial solution to $(\mathscr S^{\bf\phi}_{\o_{\eps_n}})$. Note that $\F(G')\neq 0$ because $N\geq 5$. So, after passing to a subsequence, $\wt{\bf u}_n$ is fully nontrivial by Proposition \ref{prop:eps_fully_nontrivial} and, invoking Proposition \ref{prop:ground_states}, we derive from Theorem \ref{thm:splitting} that, for each $h=1,\ldots,q$ there exist a sequence $(\xi_{hn})$ in $\rn$, a closed subgroup $G'_h$ of $G'$, and a least energy fully nontrivial solution $\bar v_h$ to the cooperative system
\begin{equation}
\tag{$\mathscr S_{h}^{\bf\phi|G_h}$}\qquad
\begin{cases}
-\Delta v_i+ v_i = \dsum\limits_{j\in I_h} \beta_{ij}|v_j|^p|v_i|^{p-2}v_i, \\
v_i\in H^1(\rn)^{\phi_h|G'_h},\qquad i\in I_h,
\end{cases}
\end{equation}
satisfying properties $(ii)$, $(iv)$ and $(v)$. Property $(v)$ combined with Proposition \ref{prop:ground_states} yields
\[c^{\bf\phi} = \sum_{h=1}^q|G'/G'_h|\,c^{\bf\phi|G'_h}_{h}= \sum_{h=1}^qc^{\bf\phi}_{h}.\]
If $h\in Q^+$, then $\phi_h\equiv 1$ and Corollary \ref{cor:cooperative_positive} gives $c^{\bf\phi|G'_h}_{h}=c^{\bf\phi}_{h}$. On the other hand, if $h\in Q^-$, then  $\phi_h=\theta$ and arguing as in  Proposition \ref{prop:cooperative_nodal} we see that 
\[\mathfrak{c}^{\theta}<|G'\xi|\,\mathfrak{c}^{\theta|G'_\xi}\quad\forall \ \xi=(z_1,z_2,y)\in\cc\times\cc\times\r^{N-4}\text{ \ with \ }(z_1,z_2)\neq (0,0).\]
Then, Proposition \ref{prop:cooperative} yields $c_h^{\bf\phi}<|G'/G'_h|\,c^{\bf\phi|G'_h}_{h}$ if $G'_h\neq G'$. As a consequence, $G'_h=G'$ for every $h$. So $v_i$ changes sign for every $i\in I_{Q^-}$. Furthermore, $\xi_{hn}\in\F(G')$, so from property $(ii)$ in Theorem \ref{thm:splitting} we get that $|\xi_{hn}-\xi_{kn}|\to\infty$ if $h\neq k$, and property $(iv)$ becomes 
\[\lim_{n\to\infty}\left\|\wt u_{in}-v_i( \ \cdot \ -\xi_{hn})\right\|=0\qquad\text{for every \ }i\in I_h.\]
Since $\dfrac{p-1}{2p}\|\bar v_h\|^2=c_h^{\bf \phi}$ and $\mathfrak{c} = \dfrac{p-1}{2p}\|\omega\|^2$, the statement $(d)$ follows from the estimates in Corollary \ref{cor:cooperative_positive} and Proposition \ref{prop:cooperative_nodal}.
\end{proof}

\begin{proof}[Proof of Theorem \ref{thm:Main2}.] This result is easily derived from Theorem \ref{thm:eps_uncoupled}, performing the change of variable described at the beginning of this section.
\end{proof}

The following two theorems establish the existence of solutions whose limit profile remains coupled.

\begin{theorem} \label{thm:eps_coupled2}
Assume $(B_1)-(B_3)$. Let $N=4$ or $N\geq 6$, $q=2$, $G:=G_6$ and $\theta$ be as in Example \ref{example1} (with $m=6$) and $\o$ be a $G$-invariant bounded domain in $\rn$ such that $0\in\o$. Let $\{1,2\}=Q^+\cup Q^-$ with $Q^+\cap Q^-=\emptyset$ and set $\phi_h\equiv 1$ if $h\in Q^+$ and $\phi_h=\theta$ if $h\in Q^-$. Then, for any $\eps_n>0$ with $\eps_n\to 0$, after passing to a subsequence, there exist a least energy fully nontrivial solution $\wt{\bf u}_n=(\wt{u}_{1n},\ldots,\wt{u}_{\ell n})$ to the system $(\mathscr{S}_{\o_{\eps_n}}^{\bf\phi})$ with $\wt{u}_{in}$ positive if $i\in I_{Q^+}$ and $\wt{u}_{in}$ sign-changing if $i\in I_{Q^-}$, and a least energy fully nontrivial solution $\bf w=(\bar w_1,\bar w_2)$ to the system
\begin{equation*}
\tag{$\mathscr S^{\bf\phi}$}\qquad
\begin{cases}
-\Delta v_i+ v_i = \dsum\limits_{j\in I_h} \beta_{ij}|v_j|^p|v_i|^{p-2}v_i, \\
v_i\in H^1(\rn)^{\phi_h},\qquad i\in I_h,\qquad h=1,2,
\end{cases}
\end{equation*}
such that
\begin{itemize}
\item[$(a)$] the components of $\bar w_h$ are positive if $h\in Q^+$ and they change sign if $h\in Q^-$,
\item[$(b)$] $\dlim_{n\to\infty}\left\|\wt u_{in}-w_i\right\|=0$ \ for every \ $i\in I_{\{1,2\}}$,
\item[$(c)$] $\bf w$ satisfies the energy estimates \eqref{eq:upper_bds2}.
\end{itemize}
\end{theorem}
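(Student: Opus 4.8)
The strategy is to produce $\bf w$ as the strong $H^1$-limit of least energy block-wise nontrivial solutions of the rescaled $\eps$-systems, and to invoke Lemma \ref{lem:energy_bounds} to keep that limit from shedding one of its two blocks to infinity. For each $n$ I would fix a least energy block-wise nontrivial solution $\wt{\bf u}_n=(\wt u_{1n},\dots,\wt u_{\ell n})$ of $(\mathscr{S}_{\o_{\eps_n}}^{\bf\phi})$, which exists by Proposition \ref{prop:eps_fully_nontrivial}$(i)$. By the analogue of Remark \ref{rem:positive} on the $G$-invariant domain $\o_{\eps_n}$ I may take $\wt u_{in}\geq 0$ for $i\in I_{Q^+}$ (whence $\wt u_{in}>0$ in $\o_{\eps_n}$ by the strong maximum principle), while for $i\in I_{Q^-}$ the component $\wt u_{in}$ changes sign automatically because $\phi_h=\theta$ is surjective. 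Extending $\wt{\bf u}_n$ by zero one has $\wt{\bf u}_n\in\cN^{\bf\phi}$, and, since $0\in\o$ forces the domains $\o_{\eps_n}$ to exhaust $\rn$, arguing exactly as in \cite[Lemma 4.2]{CS} gives $\cJ^{\bf\phi}(\wt{\bf u}_n)\to c^{\bf\phi}$; thus $(\wt{\bf u}_n)$ is a minimizing sequence for $\cJ^{\bf\phi}$ on $\cN^{\bf\phi}$.

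Applying Theorem \ref{thm:splitting} to $(\wt{\bf u}_n)$ yields, after a subsequence, a set $Q\subset\{1,2\}$, a least energy block-wise nontrivial solution $\bf w$ of $(\mathscr S_Q^{\bf\phi})$, and, for each $k\notin Q$, data $(\xi_{kn})$, $G_k$, $\bar v_k$ with the properties listed there. The core step is to prove $Q=\{1,2\}$. Since $G=G_6$ and $N=4$ or $N\geq 6$, Lemma \ref{lem:energy_bounds} (with $m=6$) gives $c^{\bf\phi}<c_k^{\bf\phi}+|G\xi|\,c_h^{\bf\phi|G_\xi}$ for every $\xi\in\rn\smallsetminus\{0\}$ and $\{h,k\}=\{1,2\}$, which is exactly condition \eqref{eq:condition} of Corollary \ref{cor:main_inequality}$(ii)$ for $\widehat Q=\{1,2\}$. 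If $Q\subsetneq\{1,2\}$, then since $q=2$ one can always pick $h\notin Q$ with $\xi_h\neq 0$ — by Theorem \ref{thm:splitting}$(iii)$ when $Q\neq\emptyset$, and because $G\xi_{1n}\neq G\xi_{2n}$ for large $n$ (so at least one orbit is nontrivial) when $Q=\emptyset$ — and then Theorem \ref{thm:splitting}$(v)$ together with the iteration of Lemma \ref{lem:energy_estimates2} used in the proof of Corollary \ref{cor:main_inequality}$(ii)$ produces $c^{\bf\phi}\geq c_k^{\bf\phi}+|G\xi_h|\,c_h^{\bf\phi|G_{\xi_h}}$, contradicting \eqref{eq:condition}; the exceptional case in that corollary's proof requires $q=1$ and hence does not arise here. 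Therefore $Q=\{1,2\}$, so $\bf w$ is a least energy block-wise nontrivial solution of $(\mathscr S^{\bf\phi})$ and Theorem \ref{thm:splitting}$(i)$ gives $\|\wt u_{in}-w_i\|\to 0$ for every $i$, which is $(b)$.

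It remains to record the rest. Because $(B_1)$, $(B_2)$, $(B_3)$ hold, and $(B_3)$ is precisely $(B_3^{\bf\phi})$ for the present $\bf\phi$ once $C_q:=C_{\bf\phi}$, Theorem \ref{thm:fully_nontrivial} tells us $\bf w$ is fully nontrivial; hence $w_i\neq 0$ for all $i$, so along the subsequence where $\|\wt u_{in}-w_i\|\to 0$ the $\wt{\bf u}_n$ are fully nontrivial for $n$ large — and, after a further subsequence, for all $n$ — hence are least energy fully nontrivial solutions of $(\mathscr{S}_{\o_{\eps_n}}^{\bf\phi})$. The signs pass to the limit: $w_i\geq 0$ for $i\in I_{Q^+}$ forces $w_i>0$ by Remark \ref{rem:positive} applied to $\bf w$ (since $|w_i|=w_i$), and $w_i$ changes sign for $i\in I_{Q^-}$ because $0\neq w_i\in H^1(\rn)^\theta$ with $\theta$ surjective; the same signs hold for the $\wt u_{in}$, giving $(a)$. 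Finally, every least energy block-wise nontrivial solution of $(\mathscr S^{\bf\phi})$ has energy $c^{\bf\phi}=\cJ^{\bf\phi}(\bf w)$, which Theorem \ref{thm:energy_estimates} bounds above by the right-hand side of \eqref{eq:upper_bds2}; this is $(c)$.

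The one genuinely delicate point is excluding $Q\subsetneq\{1,2\}$, i.e. a loss of compactness in which a whole block concentrates along a divergent sequence of points. This is exactly what Lemma \ref{lem:energy_bounds} is engineered to forbid; what makes it subtler than in the decoupling result, Theorem \ref{thm:eps_uncoupled}, where Proposition \ref{prop:ground_states} does the work because the relevant fixed-point space is nontrivial, is that $\F(G_6)=\{0\}$, so Proposition \ref{prop:ground_states} gives no leverage and one must rely entirely on the strict energy inequality of Lemma \ref{lem:energy_bounds} — in particular in the case $Q=\emptyset$, where a priori both blocks could escape simultaneously.
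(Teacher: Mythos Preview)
Your proof is correct and follows essentially the same route as the paper's: take least energy block-wise nontrivial solutions of the rescaled systems, note they form a minimizing sequence for $\cJ^{\bf\phi}$ on $\cN^{\bf\phi}$, apply Theorem \ref{thm:splitting}, and use Lemma \ref{lem:energy_bounds} to force $Q=\{1,2\}$, then invoke Theorems \ref{thm:fully_nontrivial} and \ref{thm:energy_estimates}. The only cosmetic difference is that the paper appeals to Theorem \ref{thm:fully_nontrivial} directly for the bounded-domain solutions $\wt{\bf u}_n$ (tacitly using that its proof carries over verbatim to $\o_{\eps_n}$), whereas you apply it to the limit $\bf w$ on $\rn$ and then transfer full nontriviality back to $\wt{\bf u}_n$ via the strong convergence in $(b)$; both orderings are valid.
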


\begin{proof}
Let $\wt{\bf u}_n$ be a least energy block-wise nontrivial solution to $(\mathscr S^{\bf\phi}_{\o_{\eps_n}})$. By Theorem \ref{thm:fully_nontrivial} it is fully nontrivial. Comparing Lemma \ref{lem:energy_bounds} with property $(v)$ of Theorem \ref{thm:splitting} we see that the set $Q$ given by that theorem must be all of $\{1,2\}$. Hence, there exists a solution $\bf w$, as claimed, to the system $(\mathscr{S}^{\bf\phi})$ satisfying $(a)$ and $(b)$. The energy estimates $(c)$ are given by Theorem \ref{thm:energy_estimates}.
\end{proof}

\begin{theorem} \label{thm:eps_coupled}
Assume $(B_1)-(B_3)$. Let $N=4$ or $N\geq 6$, $G:=G_\infty$ and $\t$ be as in \emph{Example} \ref{example2} and $\o$ be a $G$-invariant bounded domain in $\rn$ such that $0\in\o$. Let $\{1,\ldots,q\}=Q^+\cup Q^-$ with $Q^+\cap Q^-=\emptyset$ and set $\phi_h\equiv 1$ if $h\in Q^+$ and $\phi_h=\t$ if $h\in Q^-$. Then, for any $\eps_n>0$ with $\eps_n\to 0$, after passing to a subsequence, there exist a least energy fully nontrivial solution $\wt{\bf u}_n=(\wt{u}_{1n},\ldots,\wt{u}_{\ell n})$ to the system $(\mathscr{S}_{\o_{\eps_n}}^{\bf\phi})$ with $\wt{u}_{in}$ positive if $i\in I_{Q^+}$ and $\wt{u}_{in}$ sign-changing if $i\in I_{Q^-}$, and a least energy fully nontrivial solution $\bf w=(\bar w_1,\ldots,\bar w_q)$ to the system
\begin{equation*}
\tag{$\mathscr S^{\bf\phi}$}\qquad
\begin{cases}
-\Delta v_i+ v_i = \dsum\limits_{j\in I_h} \beta_{ij}|v_j|^p|v_i|^{p-2}v_i, \\
v_i\in H^1(\rn)^{\phi_h},\qquad i\in I_h,\qquad h=1,\ldots,q,
\end{cases}
\end{equation*}
satisfying
\begin{itemize}
\item[$(a)$] the components of $\bar w_h$ are positive if $h\in Q^+$ and they change sign if $h\in Q^-$,
\item[$(b)$] $\dlim_{n\to\infty}\left\|\wt u_{in}-w_i\right\|=0$ \ for every \ $i\in I_h$.
\end{itemize}
\end{theorem}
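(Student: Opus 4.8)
The plan is to run the argument of the proof of Theorem \ref{thm:eps_coupled2}, replacing the finite group $G_6$ by $G_\infty$ and exploiting the fact recorded in Example \ref{example2} that $|G_\infty\xi|=\infty$ for every $\xi\in\rn\smallsetminus\{0\}$ when $N=4$ or $N\geq 6$. First I would fix, for each $n$, a least energy block-wise nontrivial solution $\wt{\bf u}_n$ to $(\mathscr S^{\bf\phi}_{\o_{\eps_n}})$; such a solution exists by Proposition \ref{prop:eps_fully_nontrivial}$(i)$, and, using the obvious analogue of Remark \ref{rem:positive} for $\o_{\eps_n}$, I may take $\wt u_{in}\geq 0$ for every $i\in I_{Q^+}$. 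Extending the components by zero outside $\o_{\eps_n}$, the vector $\wt{\bf u}_n$ belongs to $\cN^{\bf\phi}$ and, arguing as in \cite[Lemma 4.2]{CS} (as already noted in the discussion preceding Proposition \ref{prop:eps_fully_nontrivial}), $(\wt{\bf u}_n)$ is a minimizing sequence for $\cJ^{\bf\phi}$ on $\cN^{\bf\phi}$. This puts us in a position to apply Theorem \ref{thm:splitting}.

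The key step — and the only delicate one — is to show that the subset $Q$ of $\{1,\ldots,q\}$ produced by Theorem \ref{thm:splitting} is all of $\{1,\ldots,q\}$, i.e. that no concentration takes place. Indeed, for any $k\notin Q$, property $(ii)$ of Theorem \ref{thm:splitting} gives $|G/G_k|<\infty$; since $|G/G_k|=|G\xi_{kn}|$ and every nonzero $G_\infty$-orbit is infinite, this forces $\xi_{kn}=0$ and $G_k=G_\infty$ for all $n$. Two indices $k\neq k'$ outside $Q$ are then impossible, since $\mathrm{dist}(G\xi_{kn},G\xi_{k'n})=0$ would contradict the last assertion of property $(ii)$; and if $Q\neq\emptyset$, property $(iii)$ would yield $|\xi_{kn}|\to\infty$, contradicting $\xi_{kn}=0$. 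Hence either $Q=\{1,\ldots,q\}$, or $Q=\emptyset$ and $q=1$; in the latter case the single solution $\bar v_1$ furnished by Theorem \ref{thm:splitting} solves $(\mathscr S^{\bf\phi})$ itself (because $G_1=G_\infty$ and $\xi_{1n}=0$), so it plays the role of $\bf w$ and property $(iv)$ reduces to $\|\wt u_{in}-v_i\|\to 0$. In every case Theorem \ref{thm:splitting} delivers a least energy block-wise nontrivial solution $\bf w=(\bar w_1,\ldots,\bar w_q)$ to $(\mathscr S^{\bf\phi})$ with $\dlim_{n\to\infty}\|\wt u_{in}-w_i\|=0$ for all $i=1,\ldots,\ell$ and, by the final assertion of that theorem, with $w_i\geq 0$ for every $i\in I_{Q^+}$.

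It then remains to upgrade ``block-wise nontrivial'' to ``fully nontrivial'' and to read off the signs. Since $(B_1)$, $(B_2)$ hold and, for the present choice of $\bf\phi$, $(B_3)$ is exactly $(B_3^{\bf\phi})$ (with $C_q=C_{\bf\phi}$, cf. the proof of Theorem \ref{thm:Main1}), Theorem \ref{thm:fully_nontrivial} shows that $\bf w$ is fully nontrivial; as the fully nontrivial solutions form a subset of $\cN^{\bf\phi}$ on which $\cJ^{\bf\phi}$ attains its infimum at $\bf w$, it is a least energy fully nontrivial solution to $(\mathscr S^{\bf\phi})$. For $i\in I_{Q^+}$ one has $w_i\geq 0$ and $w_i\neq 0$, hence $w_i>0$ by the strong maximum principle; for $i\in I_{Q^-}$, $\phi_i=\t$ is surjective and $w_i\neq 0$, so $w_i$ changes sign — this is $(a)$. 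Finally, from $\|\wt u_{in}-w_i\|\to 0$ and $w_i\neq 0$ I get $\wt u_{in}\neq 0$ for all large $n$ and every $i$, so, after passing to a subsequence, $\wt{\bf u}_n$ is fully nontrivial, hence a least energy fully nontrivial solution to $(\mathscr S^{\bf\phi}_{\o_{\eps_n}})$; arguing as for $\bf w$, its components are positive on $I_{Q^+}$ (using $\wt u_{in}\geq 0$) and sign-changing on $I_{Q^-}$ (using the surjectivity of $\t$), which gives $(b)$.

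I expect the main obstacle to be precisely the second paragraph: confirming that the infinite-orbit structure of $G_\infty$, together with properties $(ii)$ and $(iii)$ of Theorem \ref{thm:splitting}, rules out every splitting scenario except $Q=\{1,\ldots,q\}$ (and the trivial cooperative exception). Everything else is a routine assembly of Theorem \ref{thm:splitting}, Theorem \ref{thm:fully_nontrivial}, Remark \ref{rem:positive} and the properties of $G_\infty$ from Example \ref{example2}.
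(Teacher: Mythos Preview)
Your proposal is correct and follows essentially the same route as the paper: apply Theorem \ref{thm:splitting} to the minimizing sequence $(\wt{\bf u}_n)$ and use that every nonzero $G_\infty$-orbit is infinite to force $Q=\{1,\ldots,q\}$, then invoke Theorem \ref{thm:fully_nontrivial} for full nontriviality. The only cosmetic difference is that the paper applies Theorem \ref{thm:fully_nontrivial} directly to $\wt{\bf u}_n$ (tacitly using the obvious bounded-domain analogue), whereas you apply it to the limit $\bf w$ and then deduce full nontriviality of $\wt{\bf u}_n$ from the strong convergence --- your treatment of the edge case $q=1$, $Q=\emptyset$ is also more explicit than the paper's.
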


\begin{proof}
Let $\wt{\bf u}_n$ be a least energy block-wise nontrivial solution to $(\mathscr S^{\bf\phi}_{\o_{\eps_n}})$. By Theorem \ref{thm:fully_nontrivial} it is fully nontrivial. Since the only finite $G$-orbit is the origin, property $(ii)$ of Theorem \ref{thm:splitting} says that splitting is impossible. So the set $Q$ given by that theorem must be the whole $\{1,\ldots,q\}$. Hence, there exists a solution $\bf w$ to the system $(\mathscr{S}^{\bf\phi})$ with the stated properties.
\end{proof}
	
\begin{proof}[Proof of Theorem \ref{thm:Main3}.] This result follows immediately from Theorems \ref{thm:eps_coupled2} and \ref{thm:eps_coupled}, performing the change of variable described at the beginning of this section.
\end{proof}

	\bigskip
	
\begin{flushleft}
	
		\textbf{Mónica Clapp} and \textbf{Mayra Soares}\\
		Instituto de Matemáticas\\
		Universidad Nacional Autónoma de México\\               
		Circuito Exterior, Ciudad Universitaria\\
		04510 Coyoacán, Ciudad de México, Mexico\\
		\texttt{monica.clapp@im.unam.mx} \\
		\texttt{ssc\_mayra@im.unam.mx}
	
	\end{flushleft}
	

\begin{thebibliography}{99}
	
\bibitem{ad} Ackermann, Nils; Dancer, Norman: Precise exponential decay for solutions of semilinear elliptic equations and its effect on the structure of the solution set for a real analytic nonlinearity. Differential Integral Equations 29 (2016), no. 7-8, 757–774. 
		
\bibitem{JMW} Bracho, Javier; Clapp, Mónica; Marzantowicz, Wacław: Symmetry breaking solutions of nonlinear elliptic systems. Topol. Methods Nonlinear Anal. 26 (2005), no. 1, 189–201.		
		
\bibitem{bsw}Byeon, Jaeyoung; Sato, Yohei; Wang, Zhi-Qiang: Pattern formation via mixed attractive and repulsive interactions for nonlinear Schrödinger systems. J. Math. Pures Appl. (9) 106 (2016), no. 3, 477–511. 
		
\bibitem{cc} Cerami, Giovanna; Clapp, Mónica: Sign changing solutions of semilinear elliptic problems in exterior domains. Calc. Var. Partial Differential Equations 30 (2007), no. 3, 353–367. 
		
\bibitem{clz} Chen, Zhijie; Lin, Chang-Shou; Zou, Wenming: Infinitely many sign-changing and semi-nodal solutions for a nonlinear Schrödinger system. Ann. Sc. Norm. Super. Pisa Cl. Sci. (5) 15 (2016), 859–897. 		
\bibitem{CC} Cingolani, Silvia; Clapp, Mónica; Secchi, Simone: Multiple solutions to a magnetic nonlinear Choquard equation. Z. Angew. Math. Phys. 63 (2012), no. 2, 233–248.
		
\bibitem{cp} Clapp, Mónica; Pistoia, Angela: Fully nontrivial solutions to elliptic systems with mixed couplings. Nonlinear Analysis 216 (2022), 112694.
		
\bibitem{CS} Clapp, Mónica; Soares, Mayra: Coupled and uncoupled sign-changing spikes of singularly perturbed elliptic systems. Preprint, arXiv:2108.00299.
		
\bibitem{CSr} Clapp, Mónica; Srikanth, P. N.: Entire nodal solutions of a semilinear elliptic equation and their effect on concentration phenomena. J. Math. Anal. Appl. 437 (2016), no. 1, 485–497. 
				
\bibitem{dp} Dovetta, Simone; Pistoia Angela: Solutions to a cubic Schrödinger system with mixed attractive and repulsive forces in a critical regime. Math. Eng. 4 (2022), no. 4, Paper No. 027, 21 pp.
		
\bibitem{EGBJB} Esry, B. D.; Greene, Chris H.; Burke, Jr., James P.; Bohn, John L.: Hartree-Fock theory for double condensates, Phys. Rev. Lett. 78 (1997), 3594-3597.
		
\bibitem{LW} Lin, Tai-Chia; Wei, Juncheng: Spikes in two coupled nonlinear Schrödinger equations. Ann. Inst. H. Poincaré Anal. Non Linéaire 22 (2005), no. 4, 403–439.
                
\bibitem{LW2} Lin, Tai-Chia; Wei, Juncheng: Ground state of $N$ coupled nonlinear Schrödinger equations in $\r^n$, $n\leq3$. Comm. Math. Phys. 255 (2005), no. 3, 629–653.

\bibitem{llw} Liu, Jiaquan; Liu, Xiangqing; Wang, Zhi-qiang: Multiple mixed states of nodal solutions for nonlinear Schrödinger systems. Calc. Var. Partial Differential Equations 52 (2015), no. 3-4, 565–586.
			
\bibitem{sw3} Sato, Yohei; Wang, Zhi-Qiang: On the multiple existence of semi-positive solutions for a nonlinear Schrödinger system. Ann. Inst. H. Poincaré C Anal. Non Linéaire 30 (2013), no. 1, 1–22.			
		
\bibitem{sw4} Sato, Yohei; Wang, Zhi-Qiang: On the least energy sign-changing solutions for a nonlinear elliptic system. Discrete Contin. Dyn. Syst. 35 (2015), no. 5, 2151–2164.
	
\bibitem{sw1}Sato, Yohei; Wang, Zhi-Qiang: Least energy solutions for nonlinear Schrödinger systems with mixed attractive and repulsive couplings. Adv. Nonlinear Stud. 15 (2015), no. 1, 1–22. 	

\bibitem{sw2}Sato, Yohei; Wang, Zhi-Qiang: Multiple positive solutions for Schrödinger systems with mixed couplings. Calc. Var. Partial Differential Equations 54 (2015), no. 2, 1373–1392. 
				
\bibitem{s} Sirakov, Boyan: Least energy solitary waves for a system of nonlinear Schrödinger equations in Rn. Comm. Math. Phys. 271 (2007), no. 1, 199–221.

\bibitem{so}Soave, Nicola: On existence and phase separation of solitary waves for nonlinear Schrödinger systems modelling simultaneous cooperation and competition. Calc. Var. Partial Differential Equations 53 (2015), no. 3-4, 689–718. 

\bibitem{st}Soave, Nicola; Tavares, Hugo: New existence and symmetry results for least energy positive solutions of Schrödinger systems with mixed competition and cooperation terms. J. Differential Equations 261 (2016), no. 1, 505–537. 

\bibitem{ty} Tavares, Hugo; You, Song: Existence of least energy positive solutions to Schrödinger systems with mixed competition and cooperation terms: the critical case. Calc. Var. Partial Differential Equations 59 (2020), no. 1, Paper No. 26, 35 pp. 

\bibitem{tyz} Tavares, Hugo; You, Song; Zou, Wenming: Least energy positive solutions of critical Schrödinger systems with mixed competition and cooperation terms: the higher dimensional case. J. Funct. Anal. 283 (2022), no. 2, Paper No. 109497, 50 pp. 

\bibitem{tt} Tavares, Hugo; Terracini, Susanna: Sign-changing solutions of competition-diffusion elliptic systems and optimal partition problems. Ann. Inst. H. Poincaré C Anal. Non Linéaire 29 (2012), no. 2, 279–300. 

\bibitem{ww}Wei, Juncheng; Wu, Yuanze: Ground states of nonlinear Schrödinger systems with mixed couplings. J. Math. Pures Appl. (9) 141 (2020), 50–88.

\bibitem{W} Willem, Michel: Minimax theorems. Progress in Nonlinear Differential Equations and their Applications 24. Birkhäuser Boston, Inc., Boston, MA (1996).
		
\end{thebibliography}
\end{document}